\DeclarePairedDelimiter{\ceil}{\lceil}{\rceil}
\newcommand*\circled[2][1.6]{\tikz[baseline=(char.base)]{
    \node[shape=circle, draw, inner sep=1pt, 
        minimum height={\f@size*#1},] (char) {#2};}}
\newcommand{\CMT}[1]{$\text{CMT}_{#1}$}
\newcommand{\FH}[1]{$\text{FH}_{#1}$}
\renewcommand{\P}{\textbf{P} }
\newcommand{\D}{\textbf{D} }
\newcommand{\PP}{\textbf{P}}
\newcommand{\DD}{\textbf{D}}
\newcommand{\deff}[1]{\textbf{\textit{#1}}}
\newcommand{\goal}[1]{\textbf{\underline{#1}}}
\theoremstyle{definition}
\newtheorem{thm}{Theorem}[section]
\newtheorem{defn}[thm]{Definition}
\newtheorem{cor}[thm]{Corollary}
\newtheorem{prop}[thm]{Proposition}
\newtheorem{question}[thm]{Question}
\newtheorem{lemma}[thm]{Lemma}
\newtheorem{rmk}[thm]{Remark}
\newtheorem{eg}[thm]{Example}
\newcommand{\NN}{\mathbb{N}}
\DeclareMathOperator{\dom}{dom}
\title{Ajtai’s theorem for $T^2_2(R)$ and pebble games with backtracking}
\author{Eitetsu Ken\footnote{
affiliation: Graduate School of Mathematical Sciences, the University of Tokyo\\
supported by: JSPS KAKENHI Grant Number 22KJ1121, Grant-in-Aid for JSPS Fellows, and FoPM program at the University of Tokyo \\
email: yeongcheol-kwon@g.ecc.u-tokyo.ac.jp} \& Mykyta Narusevych\footnote{affiliation: Department of Mathematics and Physics, Charles University\\
email: mykyta.narusevych@matfyz.cuni.cz\\
supported by the Charles University project PRIMUS/21/SCI/014, Charles University Research Centre program No. UNCE/24/SCI/022 and GA UK project No. 246223}}
\begin{document}
\maketitle
\begin{abstract}
    We introduce a pebble game extended by backtracking options for one of the two players (called Prover) and reduce the provability of the pigeonhole principle for a generic predicate $R$ in the bounded arithmetic $T^2_2(R)$ to the existence of a particular kind of winning strategy (called oblivious) for Prover in the game.
    
    While the unprovability of the said principle in $T^2_2(R)$ is an immediate consequence of a celebrated theorem of Ajtai (which deals with a stronger theory $T_2(R)$), up-to-date no methods working for $T^2_2(R)$ directly (in particular without switching lemma) are known.
    
    Although the full analysis of the introduced pebble game is left open, as a first step towards resolving it, we restrict ourselves to a simplified version of the game. In this case, Prover can use only two pebbles and move in an extremely oblivious way. Besides, a series of backtracks can be made only once during a play.
    Under these assumptions, we show that no strategy of Prover can be winning.
    
\end{abstract}

\section{Introduction}

\quad Ajtai's theorem states the unprovability of the pigeonhole principle in bounded arithmetic and a super-polynomial
lower bound for it in constant depth Frege systems (proved in \cite{Ajtai}, and later strengthened by \cite{remake}, \cite{remake2} and \cite{countvsphp}. See Chapter 8 and 15 of \cite{proofcomplexity} for technical details and historical remarks).
The theorem and its variations (\cite{Ajtaiindependence}, \cite{countvsphp}, and \cite{More}) are among the strongest lower-bound results in Proof Complexity and independence results for bounded arithmetics.
One of the main tools to prove them is Switching Lemma for so-called (shallow) $PHP$-trees. (See \cite{Krajicek} or \cite{proofcomplexity} for a clear exposition.)
All known frameworks for understanding the proof of Ajtai's theorem, such as forcing with random variables \cite{forcingwithrandomvariables} and partially definable forcing \cite{partiallydefinableforcing} in addition to original literature, depend heavily on it.

However, we still do not know any appropriate counterpart of Switching Lemma for resolving long-standing open problems such as: $I\Delta_{0}(R)$ (or $AC^{0}$-Frege) v.s. the weak pigeonhole principle or $V^{0}(2)$ (or $AC^{0}(2)$-Frege) v.s. the (injective) pigeonhole principle (see \cite{proofcomplexity} for detailed presentations of the problems).
Even when we compare the injective pigeonhole principle and the full version of onto pigeonhole principle, Switching Lemma seems not that flexible (\cite{ontovsinj}).
Therefore, it would be very interesting if we could provide another proof of Ajtai's theorem, which is Switching-Lemma-free.

In this article, to make the presentation as simple as possible, we focus on $T^{2}_{2}(R)\not\vdash ontoPHP^{n+1}_{n}(R)$, which is an immediate consequence of Ajtai's theorem,  and its propositional translation $LK^{*}_{2+\frac{1}{2}, O(1)}$ v.s. $ontoPHP$.
We present a possible strategy to obtain a Switching-Lemma-free proof of the statement, although it is not fully successful at this point.

Our approach is an extension of pebble-game analysis for resolution (\cite{ResolutionPebble}), which is powerful enough to obtain relatively easy independence result: $T^{1}_{2}(R) \not\vdash ontoPHP^{n+1}_{n}(R)$.
We define a game $\mathcal{G}_{2}$, which is an extended pebble game allowing Spoiler in \cite{ResolutionPebble} (in our terminology, \textbf{Prover}) to \deff{backtrack} the game record bringing a part of the current partial assignment back to the past.
Since \textbf{Prover} wins the game easily if there is no restriction for strategies, we introduce \deff{oblivious strategies} of \textbf{Prover} and show that if $T^{2}_{2}(R) \vdash ontoPHP^{n+1}_{n}(R)$, then some oblivious strategy of \textbf{Prover} beats any strategy of the opponent (in our terminology, \textbf{Delayer}) in the game $\mathcal{G}_{2}$ of appropriate parameters. 

Although we leave the complete analysis of $\mathcal{G}_{2}$ open, at least we analyze a toy case of $\mathcal{G}_{2}$ setting the parameters to easy ones, further restricting oblivious strategies to ``automaton-like'' strategies, and allowing \textbf{Prover} to backtrack just once.
 
We believe one advantage of our approach compared to tackle the functional version of the problem ( adopting fresh function symbols $f$ and $g$ instead of $R$ to describe the bijective pigeonhole principle) via CPLS, the $NP$-search problem corresponding to $T^{2}_{2}$ (\cite{CPLS}), is that it is enough for us to consider partial assignments of small size (of order $\log(n)^{O(1)}$).
 It is at least not immediate for CPLS and 1-Ref(RES) in \cite{CPLS} when we consider dag-like $R(\log)$ refutations which are not necessarily narrow.
Furthermore, our approach differs from those considering $PLS^{NP}$ and GLS problems discussed in \cite{ChiariJan}, the $\Sigma^{b}_{2}$-search problems corresponding to $T^{2}_{2}$; roughly speaking, in our game approach, we consider a concrete way to answer each $NP$-query in $PLS^{NP}$ computations, at the sake of taking exponentially many steps in general (cf. Corollary \ref{determinacy}) but following a reasonable restriction (cf. Definition \ref{obliviousstrategy}). 
On the other hand, the analysis of the game is much more complex than usual pebble games because of the backtracking option.

The article is organized as follows:

\S \ref{Preliminaries} presents the convention and the formal system we shall follow and use.

\S \ref{A review of unprovability of the pigeonhole principle over T12} briefly mentions the base case of Ajtai's theorem to locate our work in the course of previous research explicitly: $T^1_2(R)$ does not prove the pigeonhole principle for relation $R$.

\S \ref{A game notion for T22} introduces our game notion $\mathcal{G}_2$, which is, roughly speaking, a pebbling game allowing \textbf{Prover} to backtrack the records of the game. 
The notion can be regarded as a transposition of Buss's witnessing argument (\cite{Buss}, \cite{clearwitnessing}) to our settings.
We connect formal proofs of the pigeonhole principle in $T^2_2(R)$ and winning strategies of \textbf{Prover} for $\mathcal{G}_2$.

In \S \ref{Analysis of simplified G2}, we consider a very limited case of $\mathcal{G}_2$:
\textbf{Prover} has only two pebbles, has to use extremely oblivious strategies, and is allowed to backtrack only once, while \textbf{Delayer} is of full power. 
We show that, in this setting, \textbf{Delayer} wins $\mathcal{G}_2$.

\S \ref{Acknowledgement} is Acknowledgement.

\S \ref{Appendix} is an appendix.
We sketch the justification of our restriction $p\Sigma_{i}$ and $s\Sigma_{i}$ of the form of propositional formulae adopted in the paper, present another equivalent definition of $\mathcal{G}_{2}$, and discuss another proof strategy on who wins the toy case of the game $\mathcal{G}_{2}$ analyzed in \S \ref{Analysis of simplified G2}.

\section{Preliminaries}\label{Preliminaries}

In this article, we consider a variant of $LK^{*}_{2+\frac{1}{2}}$, a propositional counterpart of the relativized bounded arithmetic $T^{2}_{2}(R)$, whose proofs are of constant heights and consist of cedents of constant cardinalities. 
See Chapter 8 of \cite{proofcomplexity} for the precise formulation and established translations. Note that it is as powerful as quasipolynomial-sized $R(\log)$-proofs when depth-2 formulae are in concern. See Chapter 10 of \cite{proofcomplexity}.

To clarify the convention, we explicitly describe our setup.

For each $i \in \NN$, we adopt the following common notations: $|i| := \ceil{\log_{2}(i+1)}$, the length of the binary representation of $i$, and $[i] := \{1, \ldots, i\}$ ($[0] := \emptyset$).

For a family $\mathcal{F}$ of sets and $s \in \NN$, $\mathcal{F}_{\leq s}$ denotes the subfamily of $\mathcal{F}$ collecting the elements of cardinality $\leq s$.

We adopt $\lnot$, binary $\lor,\land$ and unbounded $\bigvee, \bigwedge$ as propositional connectives, $0$ and $1$ as propositional constants, and consider only propositional formulae of negation normal form.
Given a formula $\varphi$, $\overline{\varphi}$ denotes the canonical negation normal form of $\lnot \varphi$. It is called \deff{the complement of $\varphi$}.

For a propositional formula $\varphi$, $|\varphi|$ denotes the size of $\varphi$, say, the number of occurrences of variables and connectives in $\varphi$. 
The precise definition does not matter as long as the conventions are polynomially related. See also Chapter 1 of \cite{proofcomplexity}.

The distinction between unbounded $\bigvee, \bigwedge$ and binary $\lor,\land$ are adopted since the following variation of $\Sigma^{S,t}_{d}$ given at the end of section 3.4 of \cite{proofcomplexity} is in our mind:
a propositional formula $\varphi$ is $p\Sigma_{i+\frac{1}{2}}(z)$ ($i,z \in \NN$) if and only if it has the following form:
\[
\varphi = \underbrace{\bigvee_{j_{1} \in J_{1}}\bigwedge_{j_{2} \in J_{2}}\cdots}_{\mbox{exactly} \ i\  \mbox{-times}}\psi_{j_{1},\ldots, j_{i}},
\]
 where
 \[ |\varphi| \leq z \ \& \ \ \forall k \in [i].\ J_{k} \neq \emptyset \]
 and each $\psi_{\vec{j}}$ is $\bigvee$- and $\bigwedge$-free and satisfies $|{\psi_{\vec{j}}}| \leq \log z$.
 Note that if $\varphi$ is $p\Sigma_{i+\frac{1}{2}}(z)$, then such $i$ is unique since we distinguish $\bigvee, \bigwedge$ from $\lor, \land$.
 $p\Pi_{i+\frac{1}{2}}(z)$ is defined similarly, switching the roles of $\bigvee$ and $\bigwedge$.
 
 We say $\varphi$ is $s\Sigma_{i+\frac{1}{2}}(z)$ if it is $p\Sigma_{i'+\frac{1}{2}}(z)$ for some $0 \leq i^{\prime} \leq i$, or it is $p\Pi_{i'+\frac{1}{2}}(z)$ for some $0 \leq i^{\prime} < i$.
 Similarly for $s\Pi_{i+\frac{1}{2}}(z)$. 
 We are particularly interested in the case $z=2^{|n|^{O(1)}}$ for parameters $n$ and $i \leq 2$.

We clarify our treatment of \textit{trees} in our descriptions of sequent calculus and of our game.
\begin{defn}
Let $b,h \in \NN$.
$[b]^{\leq h}$ denotes the set of finite sequences on $[b]$ with length $\leq h$ (including the empty sequence).
 
 For $v, w \in [b]^{\leq h}$, $v \subseteq w$ means that $v$ is a initial segment of $w$ (or $w$ is an extension of $v$).
 
 The length of $v$ is denoted by $height(v)$.
 
 Given $v \in [b]^{\leq h}$ and $w \in [b]^{\leq k}$, $v*w$ denotes the concatenation lying in $[b]^{\leq h+k}$. 
 
 For $v=(a_{1},\ldots, a_{h}) \in [b]^{h}$ and $k \in \NN$, set
 \begin{align*}
  v_{k} := \begin{cases}
   a_{k} \quad &(k \leq h) \\
   -1 \quad &(k > h)
  \end{cases}.
 \end{align*}
 
 If $k \leq h$, then we define $v_{\leq k}:=(v_{1},\ldots, v_{k})$.
\end{defn}
\begin{rmk}
 We often identify $[b]$ as $[b]^{1}$ and abuse the notation.
 For example, we write $v*i$ for $v*(i)$.   
\end{rmk}
\begin{defn}
\deff{A rooted tree of height $\leq h$ and $\leq b$-branching} is a subset $T \subseteq [b]^{\leq h}$ such that:
\begin{enumerate}
\item $T \neq \emptyset$.
 \item $v \subseteq w \in T \Longrightarrow v \in T$.
 %\item $v * k \in T, l \leq k \Longrightarrow v*l \in T$.
\end{enumerate}
$\emptyset \in T$ is called \deff{the root of $T$}.

For $v, w \in T$, $v < w$ means \deff{the lexicographic order}:
\begin{align*}
v < w : \Longleftrightarrow \exists k \leq h.\ (v_{\leq k} = w_{\leq k} \ \& \ v_{k+1} < w_{k+1})
\end{align*}

If $v*i \in T$, then we say \deff{$v*i$ is a child of $v$}, and \deff{$v$ is the parent of $v*i$}. If $v*i, v*j \in T$, then they are said to be \deff{siblings}.

When $v, w \in T$ satisfy $height(v) = height(w)$, then we say \deff{$v$ is left of $w$} when $v < w$.

Put 
\[height(T) := \max_{v \in T} height(v).\]
\end{defn}

\begin{eg}
If $v*1 \in T$, then $v*1$ must be the leftmost child of $v$ in $T$.
\end{eg}

In this article, following the convention of \cite{OrdinalAnalysis}, we use a one-sided formulation of propositional sequent-calculus, which can be regarded as the non-uniform version of a first-order sequent calculus implementing $T^{2}_{2}(R)$:

\begin{defn}\label{cedent calculus}
\deff{A cedent} is a finite set of propositional formulae. 
Given a cedent $\Gamma$, its \deff{semantic interpretation} is the propositional formula $\bigvee_{\varphi \in \Gamma}\varphi$. 
Here, if $\Gamma=\emptyset$, we set $\bigvee_{\varphi \in \Gamma}\varphi:=0$.
Under a truth assignment, $\Gamma$ is said to be \deff{true} if and only if its semantic interpretation is true.

We often denote cedents of the form 
\[\Gamma_{1} \cup \ldots \cup \Gamma_{k}\cup \{\varphi_{1},\ldots, \varphi_{m}\}\]
 by 
 \[\Gamma_{1}, \ldots, \Gamma_{k}, \varphi_{1},\ldots, \varphi_{m}.\]
\end{defn}

Given constants $c,d$, a cedent $S$ and a finite vertex-labeled tree $\pi=(\mathcal{T},\mathcal{S})$, $\pi$ is \deff{a $LK^{*}_{d+\frac{1}{2},c}$-derivation of $S$ (without redundancy)} if and only if the following hold:
\begin{enumerate}
 \item $height(\mathcal{T}) \leq c$.
 \item For each $v \in \mathcal{T}$, $\mathcal{S}(v)$ is a cedent of cardinality $\leq c$.
 \item $\mathcal{S}(\emptyset) = S$.
 \item For each $v \in \mathcal{T}$, $\mathcal{S}(v)$ is derived from the labels of its children, that is, $(\mathcal{S}(v*i))_{v*i \in \mathcal{T}}$ by applying one of the following derivation rules:
 \begin{itemize}
 \item Initial cedent:
 
 \begin{prooftree}
 \AxiomC{}  \RightLabel{\quad (where $x$ is a constant or a variable)}
  \UnaryInfC{$\Gamma, x, \overline{x}$}

\end{prooftree}
 
 \item $\bigvee$-Rule: 
   \begin{prooftree}
 \AxiomC{$\Gamma, \varphi_{i_{0}}$} \RightLabel{\quad (where $\bigvee_{i=1}^{I}\varphi_{i} \in \Gamma$, $1\leq i_{0} \leq I$, $\varphi_{i_{0}} \not \in \Gamma$)}
  \UnaryInfC{$\Gamma$}
 \end{prooftree}
 
\item $\lor$-Rule: 
   \begin{prooftree}
 \AxiomC{$\Gamma, \varphi_{i_{0}}$} \RightLabel{\quad (where $\varphi_{1} \lor \varphi_{2} \in \Gamma$, $i_{0}=1$ or $i_{0}=2$, $\varphi_{i_{0}} \not \in \Gamma$)}
  \UnaryInfC{$\Gamma$}
 \end{prooftree}

 \item $\bigwedge$-Rule:
    \begin{prooftree}
 \AxiomC{$\Gamma, \varphi_{1}$}
 \AxiomC{$\Gamma, \varphi_{2}$}
 \AxiomC{$\cdots$}
 \AxiomC{$\Gamma, \varphi_{I}$}
   
  \QuaternaryInfC{$\Gamma$}
 \end{prooftree}
 where $\bigwedge_{i=1}^{I}\varphi_{i} \in \Gamma$, and $\varphi_{i} \not \in \Gamma$ for each $i \in [I]$.
 
 \item $\land$-Rule:
    \begin{prooftree}
 \AxiomC{$\Gamma, \varphi_{1}$}
 \AxiomC{$\Gamma, \varphi_{2}$}
  \BinaryInfC{$\Gamma$}
 \end{prooftree}
 where $\varphi_{1} \land \varphi_{2} \in \Gamma$, and $\varphi_{i} \not \in \Gamma$ for each $i=1,2$.

 \item Trivial Cut:
  \begin{prooftree}
 \AxiomC{$\Gamma, 0$} \RightLabel{\quad (where $0 \not \in \Gamma$)}
  \UnaryInfC{$\Gamma$}
 \end{prooftree}

 \item $p\Sigma_{d+\frac{1}{2}}$-Induction:
     \begin{prooftree}
 \AxiomC{$\Gamma, \varphi_{1}$}
 \AxiomC{$\Gamma, \overline{\varphi_{1}},\varphi_{2}$}
 \AxiomC{$\cdots$}
 \AxiomC{$\Gamma, \overline{\varphi_{I-1}},\varphi_{I}$}
  \AxiomC{$\Gamma, \overline{\varphi_{I}}$}
  \QuinaryInfC{$\Gamma$}
 \end{prooftree}
 where each $\varphi_{i}$ is $p\Sigma_{d+\frac{1}{2}}(|{\pi}|)$, and $\varphi_{i}, \overline{\varphi}_{i}  \not \in \Gamma$ for $i \in [I]$.
 Here, we define the size $|{\pi}|$ of the proof $\pi$ as 
 \[|{\pi}| := \sum_{v \in \mathcal{T}} \sum_{\varphi \in \mathcal{S}(v)} |{\varphi}|.\]
Note that, when $I=1$, $p\Sigma_{d+\frac{1}{2}}$-Induction is a usual cut-rule for $p\Sigma_{d+\frac{1}{2}}(|{\pi}|)$-formulae.
\end{itemize}
\end{enumerate}
\begin{rmk}
In this article, we are particularly interested in $d=1,2$.
\end{rmk}

%Given a formula $\varphi$, $\pi$ is \textit{a $LK^{*}_{2+\frac{1}{2},c}$-proof of $\varphi$} if and only if $\pi$ is a $LK^{*}_{2+\frac{1}{2},c}$-derivation of $\varphi$.

\begin{defn}
$ontoPHP^{n+1}_{n}$ is the following cedent\footnote{We may have expanded $\bigvee$ to lower the complexity of the formulae in the cedent, but we are interested in cedents of constant cardinality with respect to the parameter $n$ (recall Definition \ref{cedent calculus} and see Theorem \ref{ProofIsStrategy2}), so we formulated the cedent $ontoPHP^{n+1}_{n}$ as above.}:
\begin{align*}
\left\{ \bigvee_{p \in [n+1]}\bigwedge_{h \in [n]} \lnot r_{ph}, \bigvee_{\substack{p \neq p^{\prime} \in [n+1],\\ h \in [n]}} (r_{ph} \land r_{p^{\prime}h}),
 \bigvee_{h \in [n]}\bigwedge_{p \in [n+1]} \lnot r_{ph}, \bigvee_{\substack{h \neq h^{\prime} \in [n],\\ p \in [n+1]}} (r_{ph} \land r_{ph^{\prime}}) \right\}
\end{align*}
\end{defn}

We can also consider a first-order counterpart of the above calculus, and we can regard the above calculus as a propositional translation of it.
For example,

\begin{prop}[essentially, Corollary 9.1.4 and the proof of Lemma 9.5.1 in \cite{Krajicek}]\label{ParisWilkieTranslation}
Suppose $T^{d}_{2}(R) \vdash ontoPHP^{n+1}_{n}(R)$, where $ontoPHP^{n+1}_{n}(R)$ is a natural formalization of the pigeonhole principle for bijections using $(n+1)$ pigeons and $n$ holes.
 Then $ontoPHP^{n+1}_{n}$ has $2^{|n|^{O(1)}}$-sized $LK^{*}_{d+\frac{1}{2}, O(1)}$-proofs.
\end{prop}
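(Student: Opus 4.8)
The plan is to invoke the standard Paris--Wilkie-style propositional translation of bounded arithmetic, fitted to the calculus $LK^{*}_{d+\frac{1}{2},c}$ set up in \S\ref{Preliminaries}. First I would take a $T^{d}_{2}(R)$-proof of a natural first-order formalization $ontoPHP^{n+1}_{n}(R)$ and put it into a normal form in which every cut formula and every induction formula is $\Sigma^{b}_{d}$ (or a negation of such). This is the free-cut-elimination step: conservatively expand the language by symbols for the needed $\Sigma^{b}_{d}$-definable functions so that the base theory becomes universally axiomatized over the oracle $R$ and $T^{d}_{2}(R)$ is that base plus $\Sigma^{b}_{d}$-induction; Gentzen-style cut elimination then leaves a sequent derivation all of whose formulae are $\Sigma^{b}_{d}$-formulae and their subformulae, whose only ``long'' rule is $\Sigma^{b}_{d}$-induction, and whose number of inference schemata, number of formulae per sequent, and nesting of quantifier blocks are constants not depending on $n$; only the closed terms used as quantifier bounds, and the parameter $n$, carry $n$.

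Next I would fix a value of $n$ and translate this normal-form proof formula-by-formula. An atom $R(p,h)$ becomes the propositional variable $r_{ph}$; a sharply bounded quantifier, ranging over $|t(n)| = |n|^{O(1)}$ values, becomes an iterated small $\lor$ or $\land$, so a $\Sigma^{b}_{0}$-formula translates into a $\lor,\land$-combination of literals of size $|n|^{O(1)}$, i.e.\ (after the normalization of \S\ref{Appendix}) an $s\Sigma_{\frac{1}{2}}$-matrix $\psi$ with $|\psi| \le \log z$, where $z := 2^{|n|^{O(1)}}$; and each of the $d$ outer bounded quantifier blocks, ranging over $t(n) = 2^{|n|^{O(1)}}$ values, becomes one layer of the big connectives $\bigvee,\bigwedge$. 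Thus a $\Sigma^{b}_{d}$-formula translates to a $p\Sigma_{d+\frac{1}{2}}(z)$-formula and its subformulae to $s\Sigma_{d+\frac{1}{2}}(z)$-formulae, and the translated sequent tree has height $O(1)$, branching $\le 2^{|n|^{O(1)}}$, hence $\le 2^{|n|^{O(1)}}$ vertices, each labeled by boundedly many formulae of size $\le 2^{|n|^{O(1)}}$, so the total size is $2^{|n|^{O(1)}}$, as required; the endsequent of the translation is precisely (a cedent equivalent to) $ontoPHP^{n+1}_{n}$.

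Then I would check that each first-order inference becomes a legal $LK^{*}_{d+\frac{1}{2},O(1)}$-inference up to a bounded amount of bookkeeping. Propositional logical rules go to the $\bigvee$-, $\bigwedge$-, $\lor$-, $\land$-rules; a $\Sigma^{b}_{d}$-cut is the $I=1$ case of the $p\Sigma_{d+\frac{1}{2}}$-Induction rule; and a $\Sigma^{b}_{d}$-induction inference, once its term bound is evaluated at the concrete number $t(n)$, unwinds into the chain of premises $\Gamma,\varphi_{1}$; $\Gamma,\overline{\varphi_{1}},\varphi_{2}$; $\ldots$; $\Gamma,\overline{\varphi_{I}}$ with $I = t(n) = 2^{|n|^{O(1)}}$, which is exactly the $p\Sigma_{d+\frac{1}{2}}$-Induction rule. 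Weakenings dissolve into the cedent-as-set convention, the constant $0$ is removed by trivial cuts, and the ``without redundancy'' side conditions together with the height and cardinality bounds of \S\ref{Preliminaries} are arranged by the usual reorganization of the first-order proof into the block structure underlying that calculus (each block of bounded first-order quantifiers producing one layer of big connectives, the bottom $\Sigma^{b}_{0}$-part producing the $\log z$-sized matrices).

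The step I expect to be the main obstacle is the first one: carrying out free-cut elimination for the relativized theory $T^{d}_{2}(R)$ while simultaneously keeping the proof of bounded logical complexity and of only quasipolynomial size --- that is, checking that the elimination neither raises the quantifier complexity above $\Sigma^{b}_{d}$ nor pushes any parameter beyond $2^{|n|^{O(1)}}$. This is essentially what Corollary 9.1.4 and the proof of Lemma 9.5.1 of \cite{Krajicek} supply; the rest is bookkeeping to match their argument to our normal forms $p\Sigma_{d+\frac{1}{2}}$, $s\Sigma_{d+\frac{1}{2}}$ and to the exact rule list of \S\ref{Preliminaries}.
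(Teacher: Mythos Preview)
Your proposal is correct and follows essentially the same approach as the paper: the paper in fact omits a proof and simply points to the standard Paris--Wilkie translation, applied to the one-sided first-order calculus set up in Appendix~\ref{firstorderLK}. The only minor technical difference is that where you speak of free-cut elimination together with a conservative expansion by $\Sigma^{b}_{d}$-definable function symbols, the paper's appendix instead adds specific sequence-coding functions $len$, $(s)_{i}$, $s*x$ to obtain the normalized classes $\widetilde{\Sigma^{b}_{i}}(R)$ (each level adding exactly one bounded quantifier), from which the translation to $p\Sigma_{d+\frac{1}{2}}$ is immediate; both devices serve the same purpose of collapsing quantifier blocks so that the propositional translation lands in the right depth class.
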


We omit the proof since it is just an instance of another formulation of classical Paris-Wilkie translation \cite{Delta0PHP}, applied to the first-order proof systems corresponding to $T^d_2(R)$ presented in Appendix \ref{firstorderLK} and the propositional proof system $LK^*_{d+\frac{1}{2},O(1)}$.
See \cite{Krajicek} for a comprehensive presentation of Paris-Wilkie translation and \cite{clearwitnessing} for a presentation of the closest conventions to ours.

\begin{rmk}
 Note that our convention $LK^{*}_{d+\frac{1}{2}, O(1)}$ adopts $p\Sigma_d$-Induction rule, 
 which has unbounded arity and makes the translation of $\Sigma^b_d(R)$-Induction more straightforward than the argument in the proof of Theorem 9.1.3 in \cite{Krajicek}.
 The resulting proof tree is of constant height, and each cedent is of constant cardinality.
\end{rmk}

\begin{rmk}\label{subformulaproperty}
$2^{|n|^{C}}$-sized $LK^{*}_{d+\frac{1}{2}, O(1)}$-proofs of $ontoPHP^{n+1}_{n}$ consist only of $s\Sigma_{d+\frac{1}{2}}(2^{|n|^{C}})$-formulae, $s\Pi_{d+\frac{1}{2}}(2^{|n|^{C}})$-formulae, and subformulae of the formulae in $ontoPHP^{n+1}_{n}$.
\end{rmk}

\section{A review of unprovability of the pigeonhole principle over $T^{1}_{2}(R)$}\label{A review of unprovability of the pigeonhole principle over T12}
In this section, we briefly review the independence result, which can be regarded as the base case of Ajtai's Theorem, in order to locate this article in the course of previous research:
\begin{thm}[essentially by \cite{Delta0PHP}. See for example \cite{partiallydefinableforcing} or \cite{Mykytaforcing} for clear expositions]\label{BaseIndependence}
\[T^{1}_{2}(R) \not \vdash ontoPHP^{n+1}_{n}(R).\]
\end{thm}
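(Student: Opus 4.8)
\emph{Proof proposal.} The plan is to argue by contradiction through the propositional level and then to play a non-backtracking pebble game of the kind studied in \cite{ResolutionPebble}. Assume $T^1_2(R)\vdash ontoPHP^{n+1}_n(R)$. By Proposition \ref{ParisWilkieTranslation} (with $d=1$) the cedent $ontoPHP^{n+1}_n$ then has $LK^*_{1+\frac12,O(1)}$-proofs of size $2^{|n|^{O(1)}}$, and by Remark \ref{subformulaproperty} every formula occurring in such a proof is $s\Sigma_{1+\frac12}(2^{|n|^{O(1)}})$, $s\Pi_{1+\frac12}(2^{|n|^{O(1)}})$, or a subformula of one of the four displayed formulas of $ontoPHP^{n+1}_n$; in particular each of its $\bigvee,\bigwedge$-free constituents has size at most $|n|^{O(1)}$, so the proof is, up to cosmetic rewriting, a quasipolynomial-size $R(\log)$-refutation of the clauses expressing $ontoPHP^{n+1}_n$. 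It thus suffices to rule out such a refutation, and for this I would run the pebble-game analysis for resolution of \cite{ResolutionPebble}, i.e.\ the non-backtracking specialisation of the game $\mathcal{G}_2$ of Section \ref{A game notion for T22}: a \textbf{Prover} keeps a small ``pebbled'' subcedent of the proof together with a partial assignment under which it is false and descends toward an initial cedent, while a \textbf{Delayer} answers \textbf{Prover}'s queries to the variables $r_{ph}$ and tries to keep a falsifying assignment alive forever.

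The first step is to turn a $2^{|n|^{O(1)}}$-size, $O(1)$-height proof into a \textbf{Prover} strategy whose plays, though possibly exponentially long, only ever expose partial assignments of size $|n|^{O(1)}$ and use $O(1)$ pebbles. Starting at the root, whose label $ontoPHP^{n+1}_n$ is false under any partial injection \textbf{Delayer} is committed to extending, \textbf{Prover} walks down the proof tree, at each inference moving to a child whose label is still false. For the Boolean, $\bigvee$- and $\bigwedge$-rules this is immediate; the point of real content is $p\Sigma_{1+\frac12}$-Induction, which \textbf{Prover} handles by the Buss-style witnessing search for the ``critical index'' of the sequence $\varphi_1,\dots,\varphi_I$, using that each $\varphi_i$ is a $p\Sigma_{1+\frac12}$-formula whose truth under a partial assignment can be exhibited by reading off a single $\bigvee,\bigwedge$-free disjunct of size $\le|n|^{O(1)}$. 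It is precisely this $\Sigma^b_1$-level witnessing that lets \textbf{Prover} be guided to a still-false child by \textbf{Delayer}'s own answers without ever revising an earlier answer — which is why, for $T^1_2$, backtracking is unnecessary.

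The second step is a \textbf{Delayer} strategy defeating every such \textbf{Prover}. \textbf{Delayer} maintains a partial injection $\sigma\subseteq[n+1]\times[n]$; on a query $r_{ph}$ she answers consistently with $\sigma$ when $p$ or $h$ is already matched, and otherwise answers $0$ unless that would leave $p$ with no available hole or $h$ with no available pigeon, in which case she sets $\sigma(p):=h$. Since at most $|n|^{O(1)}$ variables are ever touched during a play, every pigeon and every hole always retains $\ge n-|n|^{O(1)}>0$ untouched partners, so \textbf{Delayer} is never forced into a collision or a missing-hole/missing-pigeon situation; hence all four formulas of $ontoPHP^{n+1}_n$, and every cedent \textbf{Prover} can legitimately pebble, remain false, so \textbf{Prover} can never reach an initial cedent $\Gamma,x,\overline{x}$. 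This contradicts the first step, proving the theorem.

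I expect the first step — the faithful simulation of $p\Sigma_{1+\frac12}$-Induction — to be essentially all the work: one must verify that \textbf{Prover} can always find a still-false child of an induction node while certifying the relevant $\varphi_i$'s from a polylogarithmic-size partial assignment, which is exactly what keeps both the pebble count and the assignment size under control, and this is where the hypothesis $T^1_2$ rather than $T^2_2$ is used. The \textbf{Delayer} side, by contrast, is routine, because $ontoPHP^{n+1}_n$ is ``locally consistent'': every partial injection of size $o(n)$ extends, so \textbf{Delayer} is never cornered. (Alternatively one may bypass the game entirely and simply quote the classical quasipolynomial $R(\log)$ / bounded-depth lower bound for the pigeonhole principle, as in \cite{Delta0PHP}, applied to the refutation furnished by Proposition \ref{ParisWilkieTranslation}.)
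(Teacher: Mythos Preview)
Your overall plan matches the paper's: apply Proposition~\ref{ParisWilkieTranslation}, extract a \textbf{Prover} strategy for a pebble game (the paper's $\mathcal{G}_1$, Definition~\ref{DefG1}) from the $LK^*_{1+\frac12,O(1)}$-proof (this is Theorem~\ref{ProofIsStrategy1}), and then exhibit a \textbf{Delayer} strategy based on the local consistency of small partial injections. Your description of the \textbf{Prover} side is a bit loose --- at an Induction node \textbf{Prover} does not simply ``descend'' but walks linearly across the children, jumping to the right sibling whenever the current $p\Sigma_1$ cut formula is found true and carrying that witness as a counterexample to the matching $p\Pi_1$ formula --- but the substance is the same PLS-style walk the paper carries out.

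There is, however, a genuine gap in your \textbf{Delayer} argument. You correctly note that plays may be exponentially long while the \emph{currently pebbled} assignment has size $|n|^{O(1)}$, but you then assert that ``at most $|n|^{O(1)}$ variables are ever touched during a play''. These two claims are incompatible: as \textbf{Prover} walks across an Induction rule with up to $2^{|n|^{O(1)}}$ children, he repeatedly removes old pebbles and places new ones, so the total number of distinct variables queried over the whole play can vastly exceed $|n|^{O(1)}$. Your \textbf{Delayer}, who maintains a single injection $\sigma$ that only grows, would therefore eventually exhaust the available holes. The correct strategy --- and the one the paper uses in the lemma following Definition~\ref{DefG1} --- is for \textbf{Delayer} to be consistent only with the \emph{current} pebbled matching $M_l$, not with the entire history: since $\#M_l \le |n|^C$ and $2|n|^C \le n$, any query of size $\le |n|^C$ can always be covered by some matching extending $M_l$. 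The point of the pebble bound is exactly that \textbf{Delayer} is allowed to forget, and later contradict, answers she gave to variables that have since been unpebbled; without this forgetting your argument does not go through.
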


It can be proven in multiple ways: by Paris-Wilkie forcing (\cite{Delta0PHP}. See \cite{partiallydefinableforcing} for a neat presentation), by Riis criterion (\cite{RiisCriterion}), and by converting a $T^{1}_{2}(R)$-proof of $ontoPHP^{n+1}_{n}(R)$ to narrow resolution proofs (\cite{reslogtonarrowresol}) and applying resolution width lower bounds for $ontoPHP^{n+1}_{n}$ obtained by pebble-game analysis (\cite{ResolutionPebble}). 

Our approach can be regarded as a generalization of the last proof strategy to $T^{2}_{2}(R)$.
We present our version of the proof of Theorem \ref{BaseIndependence} in the rest of this section for completeness and as a showcase of the more complex argument employed in the proof of Theorem \ref{ProofIsStrategy2}.

\begin{defn}
 
Let $\mathcal{M}_{n}$ be \deff{the set of all partial matchings between $[n+1]$ (pigeons) and $[n]$ (holes)}.

For $M, M' \in \mathcal{M}_{n}$, when $M\cup M' \not\in \mathcal{M}_{n}$, in other words, $M$ and $M'$ match a pigeon to different holes or a hole to different pigeons, we say \deff{$M$ contradicts $M'$}, denoted by $M \perp M'$.
We say \deff{$M$ is consistent with $M'$} otherwise.
\end{defn}

We define the following variation of a pebble game.
\begin{defn}\label{DefG1}
Given $n, C \in \NN$, $\mathcal{G}_{1}(n,C)$ is the following game:

  \begin{enumerate}
  \item Played by two players. We call them \textbf{Prover} and \textbf{Delayer}.
  For readability, we use the pronoun ``\textit{he}'' for \textbf{Prover} and ``\textit{she}'' for \textbf{Delayer}.
  \item Possible positions are sequences on $(\mathcal{M}_{n})_{\leq |n|^{C}}$ with length at most $2^{|n|^{C}}$.
  \item The initial position is the sequence $(\emptyset)$ of length $1$.
  \item Now, we describe transitions between positions together with each player's options.
   suppose the current position is $(M_{0}, \ldots, M_{l}) \in ((\mathcal{M}_{n})_{\leq |n|^{C}})^{l+1}$.
    If $l+1 \geq 2^{|n|^{C}}$, then the play ends and \textbf{Delayer} wins.
    If $l+1 < 2^{|n|^{C}}$, then proceed as follows:
   \begin{enumerate}
     \item\label{query} First, \textbf{Prover} plays a subset $Q \subseteq P_{n} \dot\cup H_{n}$ of cardinality $\# Q \leq |n|^{C}$, and sends it to \textbf{Delayer}.   
   \item\label{answer} \textbf{Delayer} plays a matching $M' \in (\mathcal{M}_{n})_{\leq |n|^{C}}$ such that $M'$ is a minimal matching covering $Q$ and is consistent with $M_{l}$.
   If such $M'$ does not exist, the play ends and \textbf{Prover} wins.
  Otherwise, the sequence $(M_{0}, \ldots, M_{l}, M')$ is the next position.
\end{enumerate}
  \end{enumerate}
\end{defn}
The game is determined since the length of a position increases by $1$ as long as the play continues and the game ends when the length reaches $2^{|n|^{C}}$.

\begin{rmk}
An informal description of $\mathcal{G}_{1}(n,C)$ is as follows: \textbf{Prover} has $2|n|^{C}$ pebbles, and there is a game board having $(n+1)$ pigeons and $n$ holes.
He puts some pebbles on some of the pigeons and the holes in a course of the game.
Initially, there is no pebbles on the board.
In each turn, \textbf{Prover} selects at most $|n|^{C}$ pigeons and holes and puts one pebble per each.
Then \textbf{Delayer} answers which holes (resp. pigeons) are matched to the pebbled pigeons (resp. holes).
She must pretend that she knows a bijection between the pigeons and holes, that is, the answers to the pebbles on the board should always be a partial matching between pigeons and holes.
If she cannot answer in this way, she loses.
If there are more than $|n|^{C}$ pebbles on the board, \textbf{Prover} must remove some so that the number eventually becomes no more than $|n|^{C}$.
Note that we only refer the answers to the pebbles on the current board, and the answers to the removed pebbles become irrelevant to the winning condition anymore.
\end{rmk}

Moreover, the following is an easy observation:
\begin{lemma}
 If $2|n|^{C} \leq n$, then \textbf{Delayer} has a winning strategy for $\mathcal{G}_{1}(n,C)$.
 Here, \deff{a winning strategy of} \textbf{Delayer} \deff{for $\mathcal{G}_{1}(n,C)$} is a mapping \[f \colon \bigcup_{l=0}^{2^{|{n}|^{C}}-1}\left((\mathcal{M}_{n})_{\leq |n|^{C}}\right)^{l} \times (2^{P_{n} \dot\cup H_{n}})_{\leq |n|^{C}} \rightarrow (\mathcal{M}_{n})_{\leq |n|^{C}}\]
  such that \textbf{Delayer} always wins $\mathcal{G}_{1}(n,C)$ if she plays $f(P,Q)$ when the current position is $P$ and \textbf{Prover} queries $Q$, regardless \textbf{Prover}'s moves.
\end{lemma}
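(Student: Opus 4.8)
The plan is to exhibit an explicit winning strategy for \textbf{Delayer} based on the fact that $M_l$ always has small size, so the matching can be extended greedily whenever \textbf{Prover} issues a query. Concretely, when the current position is $(M_0,\dots,M_l)$ and \textbf{Prover} queries $Q\subseteq P_n\dot\cup H_n$ with $\#Q\leq |n|^C$, \textbf{Delayer} should answer with a matching $M'$ built as follows: start from $\emptyset$; go through the pigeons in $Q$ not yet matched by $M_l$ restricted to $Q$ (more precisely, through the elements of $Q$ that are not already covered by the part of $M_l$ that survives), and for each such pigeon pick a hole not used by $M_l$ nor by the partial $M'$ constructed so far, and symmetrically for holes in $Q$; the resulting $M'$ is a minimal matching covering $Q$. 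The strategy $f$ is then this assignment $(M_0,\dots,M_l,Q)\mapsto M'$, and the content of the lemma is that this is always well-defined, i.e. the greedy extension never gets stuck.

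The key steps, in order, are: (i) observe that by the rules of $\mathcal G_1(n,C)$, every matching appearing in a position has cardinality $\leq |n|^C$, so in particular $M_l$ matches at most $|n|^C$ pigeons and at most $|n|^C$ holes; (ii) note that $\#Q\leq |n|^C$, so when extending $M_l$ to cover $Q$ we need to choose at most $|n|^C$ new holes (one for each so-far-unmatched pigeon in $Q$) and at most $|n|^C$ new pigeons; (iii) count: at the moment we must assign a hole to a pigeon, the holes forbidden to us are those used by $M_l$ (at most $|n|^C$ of them) together with those already chosen during the current extension (at most $|n|^C-1$ of them), so fewer than $2|n|^C$ holes are forbidden; since there are $n\geq 2|n|^C$ holes in total, a legal choice exists — and symmetrically for assigning pigeons to holes; (iv) conclude that $M'$ is always definable, is consistent with $M_l$ by construction, and is a minimal matching covering $Q$, so \textbf{Delayer}'s move is always legal; hence the branch of the game where \textbf{Prover} wins (the ``no such $M'$'' clause of step~\ref{answer}) is never reached, and since \textbf{Delayer} wins every play that lasts $2^{|n|^C}$ steps, $f$ is winning.

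I do not expect a serious obstacle here; the only thing to be careful about is the bookkeeping in the counting step (iii), namely making sure the bound $2|n|^C\leq n$ is exactly what is needed and that one treats the pigeon-side and hole-side constraints of ``being a partial matching'' simultaneously rather than one at a time. The minimality requirement on $M'$ is automatic from building it greedily with no superfluous pairs, and consistency with $M_l$ is guaranteed because every freshly chosen hole and pigeon avoids everything $M_l$ already uses. This argument is the baby version of the pebble-game analysis that will reappear, in a substantially more intricate form, in the proof of Theorem~\ref{ProofIsStrategy2}, where \textbf{Prover}'s backtracking has to be accounted for.
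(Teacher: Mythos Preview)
Your proof is correct and follows essentially the same approach as the paper: the paper phrases it as ``extend $M_l$ to some $\tilde{M}\supseteq M_l$ of size $\leq 2|n|^C$ covering $Q$, then take a minimal $M'\subseteq\tilde{M}$ covering $Q$'', while you build $M'$ directly by a greedy sweep, but the underlying counting argument is identical. One minor remark: your closing sentence pointing to Theorem~\ref{ProofIsStrategy2} is slightly misplaced, since that theorem constructs a \emph{Prover} strategy from a proof, whereas the analogue of the present lemma for $\mathcal{G}_2$ (a Delayer win against oblivious Prover) is precisely the open problem treated only in the restricted setting of \S\ref{Analysis of simplified G2}.
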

\begin{proof}
In each turn, given a position $P=(M_{0}, \ldots, M_{l})$ and a query $Q$, there exists a matching $\tilde{M}$ such that $M_{l} \subseteq \tilde{M}$, $\tilde{M}$ covers $Q$, and $\# \tilde{M} \leq 2 |n|^{C}$ since  $2|n|^{C} \leq n$.
Therefore, it suffices for \textbf{Delayer} to answer minimal $M' \subseteq \tilde{M}$ which covers $Q$.
In this way, \textbf{Delayer} can survive arbitrary many turns and win the game.
\end{proof}

\begin{cor}
Given $C \in \NN$, \textbf{Delayer} has a winning strategy for $\mathcal{G}_{1}(n,C)$ for sufficiently large $n$.
\end{cor}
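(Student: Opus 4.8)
The plan is to obtain the corollary as an immediate consequence of the preceding lemma together with a routine growth-rate estimate. Fix $C \in \NN$. The lemma asserts that \textbf{Delayer} has a winning strategy for $\mathcal{G}_{1}(n,C)$ as soon as $2|n|^{C} \leq n$, so it is enough to check that this inequality holds for all sufficiently large $n$.

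To that end I would unfold the definition $|n| = \ceil{\log_{2}(n+1)}$ and observe that, for fixed $C$, the function $n \mapsto 2\ceil{\log_{2}(n+1)}^{C}$ is polylogarithmic in $n$ and hence eventually dominated by the identity function. Explicitly, one has $2|n|^{C} \leq 2(\log_{2}(n+1)+1)^{C}$, and $(\log_{2}(n+1)+1)^{C}/n \to 0$ as $n \to \infty$, so there is a threshold $n_{0} = n_{0}(C)$ such that $2|n|^{C} \leq n$ for every $n \geq n_{0}$.

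For each such $n$ the lemma then supplies a winning strategy for \textbf{Delayer} in $\mathcal{G}_{1}(n,C)$ --- concretely the greedy strategy that, given a position $(M_{0},\ldots,M_{l})$ and a query $Q$, extends $M_{l}$ to some $\tilde{M} \supseteq M_{l}$ of size $\leq 2|n|^{C}$ covering $Q$ and then answers with a minimal sub-matching of $\tilde{M}$ covering $Q$ --- which is precisely what the corollary claims. I do not expect any genuine obstacle in this argument; the only point worth flagging is that the threshold $n_{0}$ is allowed to depend on the fixed constant $C$, which is harmless here, and (looking ahead) is exactly the kind of dependence that will have to be tracked more carefully in the analogous but substantially harder analysis of $\mathcal{G}_{2}$.
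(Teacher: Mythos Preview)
Your argument is correct and is exactly the intended one: the paper states the corollary without proof, as an immediate consequence of the preceding lemma together with the trivial observation that $2|n|^{C}$ is polylogarithmic and hence eventually bounded by $n$. There is nothing to add.
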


%\begin{defn}
%Let $n,C \in \NN$ and $T$ be an $(n,C)$-tree.
%We define $\mathcal{G}_{1}(n,C,T)$ as a game played in the same way as $\mathcal{G}_{2}(n,C,T)$ except that we prohibit \textbf{Prover} to play $o=3$ at the stage \ref{nextmove} in the game.
%\end{defn}
On the other hand, we have the following:
\begin{thm}[essentially by \cite{reslogtonarrowresol}]\label{ProofIsStrategy1}
Suppose $ontoPHP^{n+1}_{n}$ has $LK^{*}_{1+\frac{1}{2},O(1)}$-proofs of size $2^{|n|^{O(1)}}$.
Then there exists a constant $C > 0$ such that, for sufficiently large $n \in \NN$, \textbf{Prover} has a winning strategy for $\mathcal{G}_{1}(n,C)$.
Here, \deff{a winning strategy of} \textbf{Prover} \deff{for $\mathcal{G}_{1}(n,C)$} is a mapping $f \colon \bigcup_{l=0}^{2^{|{n}|^{C}}-1}\left((\mathcal{M}_{n})_{\leq |n|^{C}}\right)^l \rightarrow (2^{P_{n} \dot\cup H_{n}})_{\leq |n|^{C}}$ such that \textbf{Prover} always wins $\mathcal{G}_{1}(n,C)$ if he queries $f(P)$ when the current position is $P$, regardless \textbf{Delayer}'s moves.
\end{thm}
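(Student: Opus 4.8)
The plan is to convert a small $LK^*_{1+\frac12,O(1)}$-proof $\pi$ of $ontoPHP^{n+1}_n$ into a winning strategy for \textbf{Prover} in $\mathcal{G}_1(n,C)$, where $C$ is chosen so that $2^{|n|^C}$ comfortably dominates the size and all relevant complexity parameters of $\pi$. The idea, going back to the resolution pebble games of \cite{ResolutionPebble} and the $R(\log)$-to-narrow-resolution translation of \cite{reslogtonarrowresol}, is that \textbf{Prover} walks down the proof tree $\pi$ from the root, always maintaining at the current node $v$ a partial matching $M_l$ (the last entry of the position) that \emph{falsifies every formula in the cedent $\mathcal{S}(v)$}. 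At the root this is possible because $\mathcal{S}(\emptyset)=ontoPHP^{n+1}_n$, whose disjuncts assert the \emph{failure} of the bijection axioms; a partial matching falsifying all four of them is exactly a partial matching that locally looks like a genuine bijection on its domain, and the empty matching $\emptyset$ (the initial position) already does the job since each formula in $ontoPHP^{n+1}_n$ is a big disjunction that is false under the empty assignment in the relevant sense — more precisely, \textbf{Prover} will only need to falsify the relevant \emph{subformulae} as he descends, and he probes \textbf{Delayer} just enough to decide them.

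The core is a case analysis on the derivation rule applied at $v$, mirroring Remark \ref{subformulaproperty}: all formulae occurring in $\pi$ are $s\Sigma_{1+\frac12}$, $s\Pi_{1+\frac12}$, or subformulae of the axiom cedent, hence are decided by a partial matching of size $O(|n|^C)$. Suppose $M_l$ falsifies $\mathcal{S}(v)$. For a $\bigvee$- or $\lor$-rule, the premise adds one disjunct $\varphi_{i_0}$ of a formula already in $\mathcal{S}(v)$; since $M_l$ falsifies that whole disjunction it falsifies $\varphi_{i_0}$ too, so \textbf{Prover} moves to that child with no new query (or a trivial one). For a $\bigwedge$- or $\land$-rule, $M_l$ falsifies $\bigwedge_i\varphi_i\in\mathcal{S}(v)$, so it falsifies \emph{some} $\varphi_{i_0}$; \textbf{Prover} queries the (few) variables needed to locate such an $i_0$ from \textbf{Delayer}'s answer, then descends to the corresponding child. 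The interesting rule is $p\Sigma_{1+\frac12}$-Induction (including the $I=1$ cut): \textbf{Prover} has cut formulae $\varphi_1,\dots,\varphi_I$, each a small $p\Sigma_{1+\frac12}$-formula; he queries \textbf{Delayer} on the $O(|n|^C)$-many variables appearing across all $\varphi_i$, reads off from her answer the truth values of $\varphi_1,\dots,\varphi_I$ under her (consistent) matching, finds the unique index $i$ where the sequence switches from true to false — $\varphi_0:=1$ true, $\varphi_{I+1}:=0$ false, so such an $i$ exists — and descends to the premise $\Gamma,\overline{\varphi_{i-1}},\varphi_i$ (or the appropriate endpoint), whose two extra formulae are both falsified by the updated matching. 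Initial cedents $\Gamma,x,\overline{x}$ are leaves: $M_l$ cannot simultaneously falsify $x$ and $\overline{x}$, so this case never arises — \textbf{Prover} never reaches a leaf with a live matching, and \textbf{Delayer}'s inability to continue is precisely the event that she has been forced into a contradiction, i.e. \textbf{Prover} wins. One must also handle Trivial Cut (the premise $\Gamma,0$ is still falsified by $M_l$ since $0$ is false), and discard/forget pebbles between turns so that the domain of the maintained matching stays within $|n|^C$ — here we only ever need the variables of the \emph{current} cedent, whose total size is $O(1)$ formulae each of size $\leq\log|\pi|=|n|^{O(1)}$, so a window of size $|n|^C$ for large enough $C$ suffices, and forgetting the rest is harmless because the winning condition refers only to the last matching in the position.

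The bookkeeping obstacle — and the step I expect to be the most delicate — is the \emph{termination/length} accounting: one has to argue that this descent, together with the at-most-$2^{|n|^C}$-step budget of $\mathcal{G}_1(n,C)$, actually lets \textbf{Prover} reach a leaf (equivalently, force \textbf{Delayer} to fail) before the clock runs out. Since $height(\pi)=O(1)$, a naive descent takes only $O(1)$ steps, so this is immediate \emph{once} we know each rule costs \textbf{Prover} at most one turn; the subtlety is purely that the formulae, though of constant \emph{height}, have size $2^{|n|^{O(1)}}$, so a $\bigwedge$-rule may have exponentially many premises and a single query/answer exchange must suffice to pick the right one — which it does, because locating a falsified conjunct needs only reading the variables in that conjunct, and the relevant conjunct has size $\leq\log|\pi|$. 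Assembling these per-rule moves into the required function $f\colon\bigcup_{l}((\mathcal{M}_n)_{\leq|n|^C})^l\to(2^{P_n\dot\cup H_n})_{\leq|n|^C}$, and checking it is well-defined on \emph{every} \textbf{Delayer}-consistent history (not just the intended one — if \textbf{Delayer} deviates, her matching still decides the cut formulae, so \textbf{Prover} still knows where to descend), completes the argument; the constant $C$ is extracted from the $O(1)$ in the hypothesised proof size via Remark \ref{subformulaproperty}.
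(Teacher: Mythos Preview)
Your handling of the $p\Sigma_{1}$-Induction rule does not work, and the gap propagates to the invariant you try to maintain. You claim that at an Induction step with cut formulae $\varphi_1,\dots,\varphi_I$, \textbf{Prover} ``queries \textbf{Delayer} on the $O(|n|^C)$-many variables appearing across all $\varphi_i$'' and reads off their truth values. But $I$ may be as large as $|\pi|\leq 2^{|n|^C}$, and each $\varphi_i$ is a $p\Sigma_{1+\frac12}(|\pi|)$-formula, i.e.\ a disjunction $\bigvee_{j}\psi_{i,j}$ of up to $2^{|n|^C}$ many small formulae. The total number of variables involved is $2^{|n|^{O(1)}}$, not $|n|^{O(1)}$, so a single query of size $\leq |n|^C$ cannot decide even one $\varphi_i$, let alone locate the switching index. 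For the same reason your invariant ``$M_l$ falsifies every formula in $\mathcal{S}(v)$'' is unattainable: once an Induction premise puts a $p\Sigma_1$-formula $\varphi_i=\bigvee_j\psi_{i,j}$ into the cedent, falsifying it means $M_l\Vdash\overline{\psi_{i,j}}$ for \emph{every} $j$, which a matching of size $|n|^C$ cannot do. Your later remark that ``the relevant conjunct has size $\leq\log|\pi|$'' rescues the $\bigwedge$-rule (where a counterexample is already implicit in $M_l$) but not Induction, where no premise is singled out in advance.

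The paper's approach avoids this by maintaining a weaker invariant: only the $s\Pi_1$-formulae in $\mathcal{S}(v[P])$ are falsified, and for each $p\Pi_1$-conjunction a single counterexample index is stored in a function $W[P]$. The $p\Sigma_1$-formulae in the cedent are left undecided. At an Induction vertex \textbf{Prover} simply descends to the \emph{leftmost} premise $\Gamma,\varphi_1$; he continues descending until a $\bigvee$-rule unfolds some $\varphi_k$ to a $p\Sigma_0$-disjunct $\psi$, queries the $\leq |n|^C$ variables of $\psi$, and if \textbf{Delayer}'s answer satisfies $\psi$ he has a witness that $\varphi_k$ holds --- equivalently a counterexample to $\overline{\varphi_k}$ --- so he jumps to the next Induction premise $\Gamma,\overline{\varphi_k},\varphi_{k+1}$ with $W$ updated. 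Termination is not because the height is $O(1)$ (your ``$O(1)$ steps'' is wrong: these sibling jumps can happen $2^{|n|^C}$ times) but because $v[P]$ strictly increases in the lexicographic order on $\mathcal{T}$, and $|\mathcal{T}|\leq |\pi|\leq 2^{|n|^C}$ bounds the number of turns.
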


The proof can be regarded as a ``restriction'' of the proof of Theorem \ref{ProofIsStrategy2}, and we only give a sketch here.
In the rest of this section, we write ``$\Sigma_{d}$'' (resp. ``$\Pi_{d}$'') instead of ``$\Sigma_{d+\frac{1}{2}}(|\pi|)$'' (resp. ``$\Pi_{d+\frac{1}{2}}(|\pi|)$'') for readability.

\begin{defn}\label{defofforcing}
 Let $M \in \mathcal{M}_{n}$.
 Let $\varphi$ be a $s\Sigma_{0}$-formula.
 We write $M \Vdash \varphi$ to denote that the partial assignment $\rho^{M}$ induced by $M$ covers the all variables occuring in $\varphi$ and $\rho^{M} \models \varphi$.
 Here, $\rho^{M}$ is the following assignment:
 \begin{align*}
 \rho^{M}\colon r_{ph} \mapsto
 \begin{cases}
  1 \quad &(\mbox{if $M$ matches $p$ and $h$}) \\
  0 \quad &(\mbox{if $M \perp \{p \mapsto h\}$})\\
  \mbox{undefined} \quad &(\mbox{otherwise})
 \end{cases}
 \end{align*}
\end{defn}

\begin{defn}\label{forcingforcedents}
Let $\Gamma$ be a cedent consisting only of $s\Sigma_{2}$- or $s\Pi_{2}(z)$-formulae.
Let $\Pi(\Gamma)$ be the set of all $p\Pi_{1}(z)$- or $p\Pi_{2}(z)$-formulae in $\Gamma$.
Let $W$ be a function defined on $\Pi(\Gamma)$ and $M \in \mathcal{M}_{n}$.
We say \deff{$(M,W)$ falsifies $\Gamma$} when the following hold:
\begin{itemize}
\item For each $\varphi \in \Gamma$ which are $p\Sigma_{0}(z)$, $M \Vdash \overline{\varphi}$.

\item For each $\bigwedge_{i=1}^{I} \varphi_{i} \in \Pi(\Gamma)$, $w:=W(\bigwedge_{i=1}^{I} \varphi_{i}) \in [I]$.
Moreover, if $\bigwedge_{i=1}^{I} \varphi_{i}$ is $p\Pi_{1}(z)$ (and therefore $\varphi_{i}$ is $p\Sigma_{0}$), then $M \Vdash \overline{\varphi_{w}}$.
\end{itemize}
We call $W$ \deff{a counterexample function for $\Gamma$}.
\end{defn}
%On the other hand, it is easy to see:
%\begin{prop}
%For any constant $C > 0$, if $n \in \NN$ is sufficiently large, then, for any $(n,C)$-tree $T$, \textbf{Delayer} can win $\mathcal{G}_{1}(n,C,T)$.
%\end{prop}
%\begin{proof}
%Consider sufficiently large $n$ such that $C|n|^{C} < n$.
%Let $T$ be an $(n,C)$-tree.
%Then, since every partial matching appearing in the course of a play of $\mathcal{G}_{1}(n,C,T)$ is of size at most $C|n|^{C}$, \textbf{Delayer} can always answer to the queries so that the resulting partial matching is consistent with the matching labeled on the current frontier $c(L)$.

%Since there is no backtracking option for \textbf{Prover} in $\mathcal{G}_{1}(n,C,T)$, \textbf{Delayer} never loses and therefore wins.
%\end{proof}

\begin{proof}[Proof of Theorem \ref{ProofIsStrategy1}]
 By assumption, there exists $C>0$ such that, for any sufficiently large $n$, there exists an $LK^{*}_{1+\frac{1}{2},C}$-derivation $\pi=(\mathcal{T},\mathcal{S})$ of $ontoPHP^{n+1}_{n}$ satisfying $|\pi| \leq 2^{|n|^{C}}$.
 We extract a winning strategy of \textbf{Prover} from $\pi$.
 The main idea is the same as the PLS witnessing for \textbf{Prover} given in \cite{clearwitnessing}; in the course of a play, \textbf{Prover} always looks at a vertex $v[P]$ in $\mathcal{T}$ and tries to falsify $\mathcal{S}(v[P])$ by $(M_{l},W[P])$, where we suppose $P=(M_{0},\ldots,M_{l})$ is the current position, and $W[P]$ is the counterexample function for $\mathcal{S}(v[P])$ obtained in the play so far.

 Precisely speaking, we inductively construct \textbf{Prover}'s strategy 
 \[f \colon \bigcup_{l=0}^{2^{|{n}|^{C}}-1}\left((\mathcal{M}_{n})_{\leq |n|^{C}}\right)^l \rightarrow (2^{P_{n} \dot\cup H_{n}})_{\leq |n|^{C}}\]
 together with a vertex function \[v \colon \bigcup_{l=0}^{2^{|{n}|^{C}}-1}\left((\mathcal{M}_{n})_{\leq |n|^{C}}\right)^l \rightarrow \mathcal{T};\ P \mapsto v[P]\]
 and a counterexample functional
 \[W \colon \bigcup_{l=0}^{2^{|{n}|^{C}}-1}\left((\mathcal{M}_{n})_{\leq |n|^{C}}\right)^l \rightarrow (\Pi_1(\mathcal{S}(v[P]) \rightarrow \NN);\ P \mapsto W[P].\]
(Here, for a cedent $S$, $\Pi_1(S)$ denotes the set of all the $p\Pi_1$-formulae in $S$.
Note that $\dom(W[P])$ depends on the input $P$ and the definition of $v[P]$.)
They will be designed so that they satisfy the following condition (\ref{IH}) for all the possible positions $P$ following the strategy $f$:
\begin{defn}\label{IH}
Let $P=(M_{0},\ldots, M_{l})$ be a position,
\begin{itemize}
 \item all the $s\Pi_1$-formulae in $\mathcal{S}(v[P])$ are falsified by $(M_l, W[P])$.
 \item for every $v' \subsetneq v[P]$, there exists $i<l$ such that $v[P']=v'$, where $P':=(M_0,\ldots,M_i)$.
\end{itemize}
\end{defn}

 For the initial position $P_0:=(\emptyset)$, we set $v[P_0]:=\emptyset \in \mathcal{T}$.
 The cedent $\mathcal{S}(v[P_0])$ is $ontoPHP^{n+1}_{n}$, and therefore all the $s\Pi_{1}$-formulae are falsified by $M_{0}=\emptyset$ trivially since there is none.
 Hence, we set $W[P_0]$ as the empty map.

Now, we proceed by induction on the length of positions.
 Assume $P=(M_{0},\ldots, M_{l})$ is a possible position inside the current domains of the partial functions $v$ and $W$ constructed so far but outside the current domain of the partial function $f$ already constructed. 
 Furthermore, assume that $f,v,W$ satisfy the condition (\ref{IH}) for all the positions in the current domains.
 Then we set $f(P)$ and $v[P*M_{l+1}], W[P*M_{l+1}]$ (where $P*M_{l+1}$ is the next positions of $P$ determined by the query $f(P)$ and \textbf{Delayer}'s answer $M_{l+1}$ which is consistent with $M_l$) depending on the inference rule used to derive $\mathcal{S}(v[P])$ (Note that the induction hypothesis is maintained in each case):
 
  \begin{itemize}
 \item Note that $\mathcal{S}(v[P])$ cannot be an Initial cedent:
 \begin{prooftree}
 \AxiomC{}  \RightLabel{\quad (where $x$ is a constant or a variable)}
  \UnaryInfC{$\Gamma, x, \overline{x}$}
\end{prooftree}
since $x$ and $\overline{x}$ cannot be falsified simultaneously.

 \item When $\mathcal{S}(v[P])=\Gamma$ is derived by $\lor$-Rule: 
   \begin{prooftree}
 \AxiomC{$\Gamma, \varphi_{i_{0}}$} \RightLabel{\quad (where $\varphi_{1} \lor \varphi_{2} \in \Gamma$, $i_{0}=1$ or $i_{0}=2$, $\varphi_{i_{0}} \not \in \Gamma$)}
  \UnaryInfC{$\Gamma$}
 \end{prooftree}
 Since $\varphi_{1} \lor \varphi_{2}$ is $s\Sigma_{2}$ or $p\Pi_{1}$ by Remark \ref{subformulaproperty}, and its outermost connective is $\lor$ (not $\bigvee$), it must be $s\Sigma_{0}$.
Therefore, it is already falsified by $M_{l}$. 
 Thus it suffices to set $f(P):=\emptyset$, define $v[P*M_{l+1}]$ to be the child of $v[P]$, and set $M_{l+1}:=M_{l}$,  $W[P*M_{l+1}]:=W[P]$.
 
  \item When $\mathcal{S}(v[P])=\Gamma$ is derived by Trivial Cut or $\land$-rule:
 similar to the above.

 \item When $\mathcal{S}(v[P])=\Gamma$ is derived by $\bigvee$-Rule: 
   \begin{prooftree}
 \AxiomC{$\Gamma, \varphi_{i_{0}}$} \RightLabel{\quad (where $\bigvee_{i=1}^{I}\varphi_{i} \in \Gamma$, $1\leq i_{0} \leq I$, $\varphi_{i_{0}} \not \in \Gamma$)}
  \UnaryInfC{$\Gamma$}
 \end{prooftree}
 
  If $\bigvee_{i=1}^{I}\varphi_{i}$ is one of the formulae in $ontoPHP^{n+1}_{n}$, which is the main difference from the situation of PLS witnessing in \cite{clearwitnessing}, $\varphi_{i_{0}}$ corresponds to a pigeon or a hole, and \textbf{Prover} queries it. 
 The answer must falsify $\varphi_{i_{0}}$, and \textbf{Prover} climbs up to the child of $v[P]$, storing the answer as a counterexample if necessary; we treat the cases for the first two formulae in $ontoPHP^{n+1}_{n}$ here:
 \begin{itemize}
  \item If $\bigvee_{i=1}^{I}\varphi_{i}=\bigvee_{p \in [n+1]}\bigwedge_{h \in [n]} \lnot r_{ph}$, then we set $f(P):=\{i_0\}\cup Q$, where $Q$ is the set of pigeons appearing in a minimal matching $M \subseteq M_l$ such that $(M,W[P])$ falsifies all the $s\Pi_1$-formulae in $\mathcal{S}(v[P])$.

  Let $M_{l+1}$ be \textbf{Delayer}'s answer.
  It includes a matching of the form $i_{0} \mapsto h_{0}$.
  Then we set $v[P*M_{l+1}]$ as the child of $v[P]$.
  Furthermore, we set
  $W[P*M_{l+1}]:=W[P] \dot\cup \{\varphi_{i_{0}} \mapsto h_{0}\}$.
   
  \item If $\bigvee_{i=1}^{I}\varphi_{i}=\bigvee_{\substack{p \neq p^{\prime} \in [n+1],\\ h \in [n]}} (r_{ph} \land r_{p^{\prime}h})$, $i_{0}$ is in the form of $\langle p,p',h \rangle$.
  \textbf{Prover} queries $f(P):=\{p,p'\} \cup Q$.
  Let $M_{l+1}$ be \textbf{Delayer}'s answer. 
  It includes matchings of the form $p \mapsto h_{0}$ and $p' \mapsto h_{1}$.
  Then we set $v[P*M_{l+1}]$ as the child of $v[P]$, and $W[P*M_{l+1}]:=W[P]$.
 \end{itemize}

 Otherwise, by Remark \ref{subformulaproperty}, $\bigvee_{i=1}^{I}\varphi_{i}$ is $p\Sigma_{1}$, then $\varphi_{i_{0}}$ is $p\Sigma_{0}$, and therefore there exists at most $|n|^{C}$-many pigeons and holes, say, a set $Q$, such that any $M \in \mathcal{M}_{n}$ covering $Q$ satisfies $M \Vdash \varphi_{i_{0}}$ or $M \Vdash \overline{\varphi_{i_{0}}}$.
 \textbf{Prover} queries $f(P):=Q$, and let $M_{l+1}$ be \textbf{Delayer}'s answer.
 If $M \Vdash \overline{\varphi_{i_{0}}}$, then we set $v[P*M_{l+1}]$ as the child of $v[P]$ and $W[P*M_{l}]:=W[P]$.
 If $M \Vdash \varphi_{i_{0}}$, then there exists the ancestor $v'$ of $v[P]$ where $\bigvee_{i=1}^{I}\varphi_{i}$ is eliminated by $p\Sigma_{1}$-Induction.
 More formally, by induction hypothesis, there exists the position $P'=(M_{0}, \ldots, M_{l'})$ ($l'<l$) such that $v'=v[P']$.
Using this, we set $v[P*M_{l+1}]$ as the immediate right sibling of $v'$ and
 \[W[P*M_{l+1}]:=W[P'] \dot\cup \left\{\bigwedge_{i=0}^{I}\overline{\varphi_{i}} \mapsto i_{0}\right\}\]
 
 \item When $\mathcal{S}(v[P])=\Gamma$ is derived by $\bigwedge$-Rule:
    \begin{prooftree}
 \AxiomC{$\Gamma, \varphi_{1}$}
 \AxiomC{$\Gamma, \varphi_{2}$}
 \AxiomC{$\cdots$}
 \AxiomC{$\Gamma, \varphi_{I}$}
  \QuaternaryInfC{$\Gamma$}
 \end{prooftree}
 where $\bigwedge_{i=1}^{I}\varphi_{i} \in \Gamma$, and $\varphi_{i} \not \in \Gamma$ for each $i \in [I]$.
 We have assumed that the $s\Pi_{1}$-formulae in $\mathcal{S}(v[P])$ are already falsified, so we set $f(P):=\emptyset$, \[v[P*M_{l+1}]:=v[P]*W[P]\left(\bigwedge_{i=1}^{I}\varphi_{i}\right),\] and $W[P*M_{l+1}]:=W[P]$.
  
 \item When $\mathcal{S}(v[P])=\Gamma$ is derived by $p\Sigma_{1}$-Induction:
     \begin{prooftree}
 \AxiomC{$\Gamma, \varphi_{1}$}
 \AxiomC{$\Gamma, \overline{\varphi_{1}},\varphi_{2}$}
 \AxiomC{$\cdots$}
 \AxiomC{$\Gamma, \overline{\varphi_{I-1}},\varphi_{I}$}
  \AxiomC{$\Gamma, \overline{\varphi_{I}}$}
  \QuinaryInfC{$\Gamma$}
 \end{prooftree}
 where each $\varphi_{i}$ is $p\Sigma_{1}$, and $\varphi_{i}, \overline{\varphi}_{i}  \not \in \Gamma$ for $i \in [I]$.
 
 We set $f(P):=\emptyset$.
 $v[P*M_{l+1}]$ is set as the leftmost child with $\Gamma \cup \{\varphi_{1}\}$, which does not have a new $p\Pi_{1}$ formula.
 Thus we set $W[P*M_{l+1}]:=W[P]$.
\end{itemize}

We can observe that, following the strategy $f$, $v[P] <_{lex} v[P*M_{l+1}]$ on $\mathcal{T}$.
Note that the definitions of $f,v,W$ for positions $P$ not covered by the above construction do not matter since they are not reachable if \textbf{Prover} follows the strategy $f$.
Therefore, the play ends with less than $2^{|n|^{C}}$ turns, and
it implies \textbf{Prover} wins the game.

 %given a position $P=(M_{0}, \ldots, M_{l})$, we assign a vertex $v[P,i] \in \mathcal{T}$ and a counter example function $W[P,i]$ defined on the set of $\Pi_{1}$-formulae in $\mathcal{S}(v)$ to each $M_{i}$ inductively on $i$ as long as the procedure goes on:
 %\begin{itemize}
 % \item For $i=0$, $M_{0}$ must be $\emptyset$ by Definition \ref{defofG1}.
 % Set $v[P,0]:= \emptyset$, the root of $\mathcal{T}$, and $W[P,0]:= \emptyset$.
 % Note that there is no $\Pi_{1}$-formulae in $\mathcal{S}(\emptyset) = ontoPHP^{n+1}_{n}$.
 % \item Assume $v[P,i]$ and $W[P,i]$ for $i<l$ are already defined.
 % The definitions of $v[P,i+1]$ and $W[P,i+1]$ depend on the inference rule used to derive $\mathcal{S}(v[P,i])$.
 %\end{itemize}
 
\end{proof}

\section{A game notion for $T^{2}_{2}(R)$}\label{A game notion for T22}

\quad This section aims to give a candidate of an appropriate game notion for $2^{|n|^{O(1)}}$-sized $LK^{*}_{2+\frac{1}{2},O(1)}$-proofs.

Towards precise descriptions of them, we define the following notions:

\begin{defn}
A tree $T$ is \deff{an $(n,C)$-tree} if and only if it satisfies the following:
\begin{enumerate}
 \item $T$ is a tree of height $\leq C$ and $\leq 2^{|n|^{C}}$-branching,
 \item If $v*k \in T$ and $l \in [k]$, then $v*l \in T$.
\end{enumerate}
\end{defn}

We define the game $\mathcal{G}_{2}$ as an extended pebble game where \textbf{Prover} has another option than putting pebbles and asking queries; he can backtrack the game record, bringing back a certain amount of partial matching he currently has. 
We manage the record of the game by trees rather than sequences as in Definition \ref{DefG1} so that the notion of \textit{obliviousness} in Definition \ref{obliviousstrategy} and the proof of Theorem \ref{ProofIsStrategy2} become easy to describe.
The precise definition of $\mathcal{G}_2$ is as follows (for an informal description of the game, see Remark \ref{informalDefG2}):

\begin{defn}\label{DefG2}
Let $n,C \in \NN$.
Suppose we are given an $(n,C)$-tree $T$.
 $\mathcal{G}_{2}(n,C,T)$ is the following game:
  
 \begin{enumerate}
  \item Played by two players. We call them in the same way as in Definition \ref{DefG1}.
  \item \deff{A possible position} is a map (or a partial labeling) $L$ such that:
  \begin{enumerate}
   \item $\dom(L) \subseteq T$.
   \item For $v \in \dom(L)$, $L(v)$ is of the form $(M,A_{1},\ldots,A_{height(v)})$, where $M$ is a partial matching between $(n+1)$-pigeons and $n$-holes with size $\leq |n|^{C}\times height(v)$, and each $A_{j} \in 2^{|n|^{C}}$.
 
   \item $\dom(L)$ is closed downwards under $\subseteq$.
   %\item If $v \in \dom(L)$ and $w$ is a sibling of $v$ in $T$, then $w \in \dom(L)$.
   \item $v \subseteq w \in \dom(L)$, if $(M,\vec{A})=L(v)$ and $(M',\vec{A}')=L(w)$, then they satisfy $M \subseteq M'$ and $\vec{A} \subseteq \vec{A}'$.
  \end{enumerate}
   
   \quad For future convenience, we introduce the following notations:
   \begin{itemize}
    \item Set $c(L) := \max\dom(L)$.
    \item Let $\mathcal{P}$ be \deff{the set of all possible positions}.
   \end{itemize}
   
   \item \deff{The initial position} is $L_{0}$, where $\dom(L_{0})$ is the rooted tree of height $0$, that is, consists only of the root $\emptyset$, and $L_{0}(\emptyset)= (\emptyset)$.
   \item Now, we describe transitions between positions together with each player's options:
   suppose the current position is $L$, and $L(c(L)) = (M,\vec{A})$.
   
   \begin{enumerate}
   \item\label{query} First, \textbf{Prover} plays a subset $Q \subseteq P_{n} \dot\cup H_{n}$ of cardinality $\# Q \leq |n|^{C}$, and send it to \textbf{Delayer}.   
   \item\label{answer} \textbf{Delayer} plays a minimal matching $M' \in \mathcal{M}_{n}$ covering $Q$, and send it back to \textbf{Prover}.
   
   \item\label{nextmove} \textbf{Prover} plays a triple $\langle o,x,B \rangle$, where $o \in [3]$, $x \in [2^{|n|^{C}}]$ if $o=1$, $x \subsetneq c(L)$ if $o =2,3$, and $B \in [2^{|n|^{C}}]$. 
   %Furthermore, we impose the following: let $cL=v*k*v$.
   %\begin{enumerate}
   %\item If $o=2$, $v*(k+1) \in T$.
   %\item If $o=3$, $v*(k-1) \in T$.
   %\end{enumerate}
   \item Depending on $o$, the next position (or judgment of the winner) is determined as follows:
    
    \begin{enumerate}
     \item We first describe the case when $o=1$.
     If $c(L) * x \not \in T$, then the game ends, and \textbf{Prover} loses.
     Otherwise, extend $L$ to a map on $\dom(L) \cup \{c(L)*x\}$ by labeling the child $c(L)*x$ with $(M \cup M', \vec{A},B)$, and let $L'$ be the resulting partial labeling on $T'$.
     Note that $c(L)*x \not \in \dom(L)$ by definition of $c(L)$.
     
     If $M \perp M'$, then the game ends, and \textbf{Prover} wins.
     Otherwise, $L' \in \mathcal{P}$, and it is the next position.
     
    \item If $o=2$, proceed as follows.
     Let $c(L) = x*k*\sigma$ ($k \in [2^{|n|^{C}}]$, $\sigma \in [2^{|n|^{C}}]^{\leq C}$. Note that $\sigma$ may be empty).
     If $x*(k+1) \not\in T$, the play ends and \textbf{Prover} loses.
     
     \quad Consider the case when $x*(k+1) \in T$. 
     Note that $x*(k+1) \not \in \dom(L)$ by $x*k \subseteq \dom(L)$.
     Let $(M'',\vec{\alpha}) = L(x)$.
     Extend $L$ by labeling $x*(k+1)$ with $(M'' \cup M', \vec{\alpha},B)$, and let $L'$ be the resulting map.
     
     \quad If $M'' \perp M'$ as matchings, then the game ends, and \textbf{Prover} wins.
     Otherwise, $L' \in \mathcal{P}$, and it is the next position.
     
     \item Consider the case when $o=3$.
     Let $c(L) = x*k*\sigma$.
     
     \quad If $x*(k-1) \not\in \dom(L)$, then the game ends, and \textbf{Prover} loses.
     
     \quad Otherwise, let $l$ be the maximal extension of $x*(k-1)$ within $\dom(L)$. 
          If $l$ is a leaf of $T$, the game ends, and \textbf{Prover} loses.
     
     Otherwise, let $(M'',\vec{\alpha}) := L(l)$.
     Let 
     \[T':=(\dom(L) \setminus \{v \in \dom(L) \mid x*k \subseteq v\}) \cup \{l*1\}\]
     Note that $l*1 \in T$ by assumption on $T$, and hence $T' \subseteq T$.
     Furthermore, $l*1 \not \in \dom(L)$ by definition of $l$. 
     Now, set
     \[L':= L \restriction_{(\dom(L) \cap T')} \cup \{l*1 \mapsto (M'' \cup M', \vec{\alpha},B)\}\]
     
     \quad If $M'' \perp M'$ as matchings, the game ends, and \textbf{Prover} wins.
     Otherwise, $L' \in \mathcal{P}$, and it is the next position.     
    \end{enumerate}
    \end{enumerate}
 \end{enumerate}
\end{defn}

\begin{rmk}\label{informalDefG2}
The intended meaning of each item above is as follows:
\begin{enumerate}
 \item \textbf{Delayer} pretends to have a truth assignment falsifying $ontoPHP^{n+1}_{n}$, which is, of course, impossible, and \textbf{Prover} wants to disprove it by querying how certain pigeons and holes are mapped to each other.
 \item 
 Intuitively, $L$ is \textbf{Prover}'s partial record of a play of the pebble game $\mathcal{G}_1$.
 $T$ is a proof-tree $\mathcal{T}$ of a $2^{|n|^{C}}$-sized $LK^{*}_{2+\frac{1}{2}, C}$-derivation $(\mathcal{T},\mathcal{S})$ of $ontoPHP^{n+1}_{n}$.
 For $v \in \dom(L) \subseteq T$,
 $L(v)=(M,\vec{A})$ means $M$ is the state of pebbles played at $v$, and $\vec{A}$ is auxiliary information for \textbf{Prover} to decide the next move, which may carry at most $C|n|^{C}$ bits of information and is insufficient to code the whole play of the game $\mathcal{G}_{2}$.
 
 $c(L)$ stands for \deff{the current frontier} of the position $L$.  
 
 \item The play starts at the root with no pebbles on the board.
 \item The play proceeds in turn, starting from \textbf{Prover}.
 \begin{enumerate}
  \item \textbf{Prover} queries at most $|n|^{C}$-many pigeons and holes $Q$ and sends it to \textbf{Delayer}.
  \item \textbf{Delayer} answers which holes (resp. pigeons) are matched to the pigeons (resp. the holes) queried.
  \item Based on \textbf{Delayer}'s answer, \textbf{Prover} chooses from one of the three options indicated by $o \in [3]$. 
  Furthermore, he specifies auxiliary information $x$ and $B$ (described in the following) to determine the next position precisely.
  \item 
  \begin{enumerate}
   \item If $o=1$, climb up to the child $c(L)*x$ of $c(L)$ and add $M'$ to the matching.
   \textit{Prover} also remembers $B$, which can code $|{n}|^{O(1)}$ bits of information.
   \item If $o=2$, move backward to $x \subsetneq c(L)$, climb up to the immediate right child of $x$, adding $M'$ to the matching, forgetting the differences of matching and $\vec{A}$ between $x$ and $c(L)$, but remembering $B$.
   Note that we keep a record of the labels of the vertices from $x$ up to $c(L)$.
   \item If $o=3$, move backward to $x \subsetneq c(L)$, climb up to the immediate left path from $x$, adding $M'$ to the matching and remembering $B$.
   This amounts to backtracking the game record, bringing back the information $M'$ obtained at the current position.
   Note that we erase the record of the labels of the vertices between $x$ and $c(L)$, including $c(L)$'s but not $x$'s.
  \end{enumerate}
 \end{enumerate}
\end{enumerate}
In any case, if \textbf{Prover} cannot play within the tree $T$, \textbf{Prover} loses.
If \textbf{Delayer}'s answer $M'$ contradicts the matching $M$ labeled at the parent of the subcedent $c(L)$, \textbf{Prover} wins.  
\end{rmk}

%\begin{rmk}
%For our purpose, it is also possible to treat the input $v$ of $f_{1}$ and $f_{2}$ equally as inputs $\vec{A}$ since our main interest is analysis of quasipolynomial-sized $LK_{2+\frac{1}{2},O(1)}^{*}$-proofs, and therefore $v$ can be encoded by a number bounded by $2^{|n|^{O(1)}}$ similarly as $\vec{A}$ (see Theorem \ref{ProofIsStrategy2}).
%\end{rmk}

First, we observe that the game above is determined.
Towards it, we introduce the following binary relation:
\begin{defn}\label{finite well-order}
Let $T,T'$ be trees.
We write $T \prec T'$ if and only if there exists $w \in T'$ such that
\[T_{< w} = T'_{< w} \ \& \ w \not \in T.\]
Here, for a vertex $v$ of a tree $S$, $S_{<v}$ denotes $\{s \in S \mid s < v\}$.
Note that $T \prec T'$ implies $T \neq T'$.
Set $T \preceq T' :\Leftrightarrow T=T' \ \mbox{or} \ T \prec T'$.
\end{defn}

\begin{lemma}
$\preceq$ is a linear order on the set of finite trees.
\end{lemma}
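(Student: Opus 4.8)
The plan is to recognize $\prec$ as a single lexicographic comparison and to read off the order axioms from that. Throughout I would work inside a fixed ambient $[b]^{\le h}$ containing the trees under consideration; in our applications this ambient is a fixed $(n,C)$-tree, and in general any finitely many finite trees jointly embed into some $[b]^{\le h}$, so nothing is lost. The first step is to check that the lexicographic vertex order $<$ is a strict total order on $[b]^{\le h}$: irreflexivity and transitivity are routine, and for $v\ne w$ the padding convention $v_k=-1$ yields a least index at which $v$ and $w$ differ, which forces exactly one of $v<w$, $w<v$ to hold. (This convention is precisely what makes sequences of unequal length comparable and puts the root $\emptyset$ at the bottom.) Since $T\triangle T'$ is finite, for $T\ne T'$ I then set $w^{\ast}(T,T'):=\min_{<}(T\triangle T')$.

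The key step is the characterization
\[ T\prec T'\iff T\ne T'\ \text{and}\ w^{\ast}(T,T')\in T'. \]
For the forward direction, any witness $w$ of $T\prec T'$ lies in $T'\setminus T\subseteq T\triangle T'$, while the condition $T_{<w}=T'_{<w}$ says exactly that no vertex strictly below $w$ lies in $T\triangle T'$; hence $w=w^{\ast}(T,T')$. For the converse, with $w:=w^{\ast}(T,T')$, minimality gives $T_{<w}=T'_{<w}$, and $w\in T'\cap(T\triangle T')$ gives $w\notin T$, so $w$ witnesses $T\prec T'$. In particular the witness, when it exists, is unique, and $w^{\ast}(T',T)=w^{\ast}(T,T')$.

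From this, the axioms follow quickly. Irreflexivity of $\prec$ (hence reflexivity of $\preceq$) holds because $T\triangle T=\emptyset$ admits no $w^{\ast}$. For totality, when $T\ne T'$ the vertex $w^{\ast}(T,T')$ lies in exactly one of $T,T'$, giving $T\prec T'$ in the first case and $T'\prec T$ in the second. For antisymmetry, $T\prec T'$ together with $T'\prec T$ would force the common witness $w^{\ast}(T,T')$ to lie simultaneously in $T'\setminus T$ and in $T\setminus T'$, which is impossible; hence $T\preceq T'$ and $T'\preceq T$ imply $T=T'$. Transitivity is the one place requiring a short case split: given $T_1\prec T_2\prec T_3$, put $u:=w^{\ast}(T_1,T_2)\in T_2\setminus T_1$ and $v:=w^{\ast}(T_2,T_3)\in T_3\setminus T_2$. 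The case $u=v$ is impossible, as it would place $u$ both in and out of $T_2$; and if $u<v$ (resp. $v<u$) one chains the two ``agreement below the witness'' facts to conclude that $T_1$ and $T_3$ agree strictly below $u$ (resp. $v$) while that vertex still lies in $T_3\setminus T_1$, so $w^{\ast}(T_1,T_3)$ equals $u$ (resp. $v$) and lies in $T_3$, whence $T_1\prec T_3$.

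I do not anticipate a real obstacle: once $\prec$ is seen as the lexicographic order on characteristic functions indexed by $([b]^{\le h},<)$, everything is elementary. The only points that deserve care are confirming that $<$ on vertices is genuinely total — where the $-1$ padding is essential — and pinning down the ambient set so that $\min_{<}(T\triangle T')$ is unambiguous, which is automatic for subtrees of a fixed $(n,C)$-tree.
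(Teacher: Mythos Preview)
Your argument is correct and close in spirit to the paper's, but your organizing device is cleaner. The paper verifies antisymmetry, transitivity, and trichotomy directly from the definition, each time picking witnesses and doing a case split on which is $\le$-smaller; for trichotomy it takes the \emph{maximum} $w\in T\cup T'$ with $T_{<w}=T'_{<w}$ and then argues by contradiction that $w\notin T\cap T'$. Your single observation that the witness is forced to be $w^{\ast}(T,T')=\min_{<}(T\triangle T')$ subsumes all of this: it identifies the paper's trichotomy vertex at once and makes the case analyses for transitivity and antisymmetry uniform. The ``lexicographic order on characteristic functions'' viewpoint is the conceptual payoff the paper leaves implicit.

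One small slip to fix: in your totality clause, the two cases are swapped. If $w^{\ast}(T,T')\in T'$ you get $T\prec T'$, and if $w^{\ast}(T,T')\in T$ you get $T'\prec T$; as written, your ``first case'' and ``second case'' are reversed relative to the order in which you list $T,T'$. The mathematics is unaffected.
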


\begin{proof}
First, $\preceq$ is antisymmetric.
Towards a contradiction, suppose $T \prec U$ and $U \prec T$. 
Let $u \in U$ witness $T \prec U$ and $t \in T$ witness $U \prec T$.
If $u \leq t$, then it contradicts the two assumptions $U_{< t} = T_{< t}$ and $u \not \in T$.
Similar for $t\leq u$, and it is absurd. 

Besides, $\prec$ is transitive. 
Suppose $S \prec T$ and $T \prec U$. Let $t \in T$ witness $S \prec T$ and $u \in U$ witness $T \prec U$.
If $t \leq u$, then $S_{<t} = T_{<t} = U_{<t}$, $t \not \in S$, and $t \in U$ because $T_{<u}=U_{<u}$ and $u \in U$.
If $t >u$, $S_{<u} = T_{<u}=U_{<u}$, and $u \not \in T$ together with $S_{<t}=T_{<t}$ implies $u \not\in S$.

Lastly, $\prec$ satisfies a trichotomy: if $T \neq U$, then $T \prec U$ or $U \prec T$.
Indeed, by finiteness of $T$ and $U$, take the maximum $w \in T \cup U$ such that $T_{<w}=U_{<w}$.
Note that $T_{<\emptyset} = U_{<\emptyset}$ trivially. 
Towards a contradiction, suppose $w \in T \cap U$. 
Then, since $T \neq U$ and $T_{<w}=U_{<w}$, there exists $v \in T \cup U$ such that $w < v$.
Take the least such $v$. 
Then
\[ T_{<v} = T_{<w} \cup \{w\} = U_{<w} \cup \{w\} = U_{<v},\]
contradicting the maximality of $w$.
Therefore, $w$ witnesses $T \prec U$ or $U \prec T$.

\end{proof}

\begin{rmk}
    If we restrict the order $\prec$ to finite trees of height $\leq h$ and $< b$-branching, we can embed the opposite of $\prec$ into $[b^{b^{h+1}}]$ via Cantor normal form with the base $b$ as follows.
    Given a finite tree $T$ of height $\leq h$ and $< b$-branching, set the ordinal $O_1(T):=\sum_{v \in L(T)} b^{h-height(v)}$, where $L(T)$ is the set of all the leaves in $T$.
    Furthermore, let $\bar{T}$ be the extension of $T$ obtained by adding the verteces of the form $v*k$ where $v \in T \setminus L(T)$ and $k \in [b-2]$. 
    Then define $O_2(T):=\sum_{v\in L(\bar{T})} b^{O_1((\bar{T})_{> v})}$.
    Here, $(\bar{T})_{>v}$ is the finite tree isomorphic to 
    \[ \{w \in \bar{T} \mid w = \emptyset \quad \mbox{or} \quad v <_{lex} w\}.\]
\end{rmk}

\begin{lemma}\label{monotone}
If a position $L$ transitions to $L'$ in a play of $\mathcal{G}_{2}(n,C,T)$, then $\dom(L) \prec \dom(L')$.

\end{lemma}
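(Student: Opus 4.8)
My plan is to prove this by a direct case analysis on Prover's move $\langle o,x,B\rangle$ in step~\ref{nextmove} of Definition~\ref{DefG2}. In each of the three cases $o\in[3]$, assuming that a genuine transition to a new position $L'$ takes place (rather than the play ending), I will exhibit an explicit witness $w\in\dom(L')$ for the relation $\dom(L)\prec\dom(L')$ of Definition~\ref{finite well-order}; in every case $w$ will be the single vertex freshly added to the domain.

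For $o=1$ we have $\dom(L')=\dom(L)\cup\{c(L)*x\}$, with $c(L)*x\in T$ since the play continues. Because $v<v*i$ in the lexicographic order for every $v$ and every $i$, and $c(L)=\max\dom(L)$, the vertex $c(L)*x$ is strictly above every element of $\dom(L)$; in particular $c(L)*x\notin\dom(L)$, and $\dom(L)_{<c(L)*x}=\dom(L')_{<c(L)*x}$ because $c(L)*x$ is not below itself. So $w:=c(L)*x$ works. The case $o=2$ is essentially the same: writing $c(L)=x*k*\sigma$ one gets $\dom(L')=\dom(L)\cup\{x*(k+1)\}$, and comparing $x*k*\sigma$ with $x*(k+1)$ at coordinate $height(x)+1$ yields $x*k*\sigma<x*(k+1)$ since $k<k+1$; hence $x*(k+1)$ is again strictly above $\max\dom(L)$, so in particular it is not in $\dom(L)$, and $w:=x*(k+1)$ is the required witness.

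The backtracking case $o=3$ is the one that needs a short argument. Here $c(L)=x*k*\sigma$, $l$ is the maximal extension of $x*(k-1)$ lying in $\dom(L)$, and
\[
\dom(L')=\bigl(\dom(L)\setminus S\bigr)\cup\{l*1\},\qquad S:=\{v\in\dom(L)\mid x*k\subseteq v\}.
\]
By maximality of $l$ we have $l*1\notin\dom(L)$ (as already noted in Definition~\ref{DefG2}), and $l*1\in\dom(L')$. So it remains only to check $\dom(L)_{<l*1}=\dom(L')_{<l*1}$, which, since $l*1$ is not below itself, reduces to showing that no element of the deleted set $S$ lies below $l*1$. But $l$ extends $x*(k-1)$, so $l*1$ carries $k-1$ at coordinate $height(x)+1$, whereas every $v\in S$ carries $k$ there and agrees with $l*1$ on the first $height(x)$ coordinates; as $k-1<k$ this gives $l*1<v$ for all $v\in S$. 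Thus $S\cap\dom(L)_{<l*1}=\emptyset$, deleting $S$ does not touch the initial segment below $l*1$, and $w:=l*1$ witnesses $\dom(L)\prec\dom(L')$.

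The one place that takes any care — and the step I would double-check most carefully — is the coordinate comparison in the $o=3$ case: one must remember that $l$, hence $l*1$, lives in the subtree of the sibling $x*(k-1)$ while the whole deleted subtree $S$ lives in the subtree of $x*k$, so that representatives of the two families first diverge at coordinate $height(x)+1$, where the inequality $k-1<k$ settles the comparison. The other cases are immediate from unfolding the definitions, so I do not expect any real obstacle.
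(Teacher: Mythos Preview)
Your proof is correct and follows exactly the same approach as the paper: a case split on $o\in[3]$ with the very same witnesses $c(L)*x$, $x*(k+1)$, and $l*1=c(L')$ respectively. You simply spell out the lexicographic comparisons (especially for $o=3$) that the paper leaves implicit.
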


\begin{proof}

Suppose $\langle o,x,B \rangle$ is \textbf{Prover}'s option at $L$. 
We split cases:
\begin{enumerate}
 \item if $o=1$, then $c(L)*x$ witnesses $\dom(L) \prec \dom(L')$.
 \item if $o=2$, then $x*(k+1)$ witnesses $\dom(L) \prec \dom(L')$, where $x*k \subseteq c(L)$.
 \item if $o=3$, then $c(L')$ witnesses $\dom(L) \prec \dom(L')$.
 Note that $c(L')=l*1$, where $l$ is the maximum leaf of $\dom(L)$ extending $x*(k-1)$. 
 Here, $x*k \subseteq c(L)$ again. 
\end{enumerate}

\end{proof}

\begin{cor}\label{determinacy}
For any $n,C \in \NN$ and an $(n,C)$-tree $T$, $\mathcal{G}_{2}(n,C,T)$ ends within $2^{2^{(C+1)|n|^{C}}}$-steps, determining the winner.
\end{cor}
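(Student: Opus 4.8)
The goal is to bound the number of steps of $\mathcal{G}_2(n,C,T)$ and conclude that the game terminates with a determined winner. The plan is to combine Lemma \ref{monotone}, which says that each transition strictly increases $\dom(L)$ with respect to the order $\prec$, with a crude upper bound on the length of any $\prec$-increasing chain of finite trees of bounded height and branching. Since $\prec$ is a linear order (by the lemma preceding Lemma \ref{monotone}) and there are only finitely many subtrees of $T$, any strictly $\prec$-increasing sequence of positions' domains must be finite, hence the game cannot run forever; and since every position is either terminal (a winner is declared) or admits a transition, the game ends with a winner. So the only quantitative content is the step bound.

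First I would note that every position $L$ of $\mathcal{G}_2(n,C,T)$ has $\dom(L)$ a subtree of $T$, and $T$ is an $(n,C)$-tree, hence a rooted tree of height $\leq C$ and $\leq 2^{|n|^C}$-branching. The number of vertices of such a tree is at most $\sum_{k=0}^{C}(2^{|n|^C})^k \leq (C+1)\cdot 2^{C|n|^C} \leq 2^{(C+1)|n|^C}$ for $n$ large (and trivially one can absorb the small factors). Therefore $T$ has at most $2^{(C+1)|n|^C}$ vertices, so the number of subtrees of $T$ is at most $2^{\,2^{(C+1)|n|^C}}$. Since by Lemma \ref{monotone} the sequence $\dom(L_0)\prec \dom(L_1)\prec\cdots$ of successive positions is strictly $\prec$-increasing, and $\prec$ is a linear (in particular antisymmetric) order on this finite set of subtrees, no domain can repeat; hence the play has at most $2^{\,2^{(C+1)|n|^C}}$ positions, i.e. the game ends within $2^{\,2^{(C+1)|n|^C}}$ steps.

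For determinacy itself: at each non-terminal position the transition rules of Definition \ref{DefG2} assign, for every choice of \textbf{Prover}'s query $Q$, \textbf{Delayer}'s answer $M'$, and \textbf{Prover}'s triple $\langle o,x,B\rangle$, either a uniquely determined next position $L'\in\mathcal{P}$ or a declaration that the game ends with a specified winner. Thus $\mathcal{G}_2(n,C,T)$ is a finite two-player game of perfect information with no draws, and the termination bound above shows every play is finite; by the Gale–Stewart/Zermelo argument for finite games one of the two players has a winning strategy. (In the write-up I would only need the weaker statement actually asserted — that every play terminates and a winner is declared — which is immediate from the preceding paragraph together with the observation that every non-terminal position has a legal continuation.)

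The main obstacle, such as it is, is purely bookkeeping: one must make sure the counting of subtrees is honest, i.e. that every game position's domain really is a subtree of the fixed $T$ (this is built into clause (2)(a),(c) of Definition \ref{DefG2} and is preserved by all three transition types, including the pruning in the $o=3$ case where $T' \subseteq T$ is explicitly checked), and that the constant factors $(C+1)$ and $2^{C|n|^C}$ genuinely fit inside $2^{(C+1)|n|^C}$ for the relevant range of $n$. Neither of these is deep; there is no real mathematical difficulty beyond invoking Lemma \ref{monotone} and the linearity of $\prec$.
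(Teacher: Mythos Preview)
Your proposal is correct and follows essentially the same approach as the paper: bound the number of possible domains $\dom(L)$ by counting subsets of the vertex set $[2^{|n|^C}]^{\leq C}$ (at most $2^{(C+1)|n|^C}$ vertices, hence at most $2^{2^{(C+1)|n|^C}}$ subsets), then invoke Lemma~\ref{monotone} and the linearity of $\prec$ to conclude that the sequence of domains is strictly increasing and therefore bounded in length. The paper's proof is a terse two-sentence version of exactly this; your additional remarks on determinacy and on the bookkeeping for the $o=3$ transition are accurate but go slightly beyond what the paper writes out.
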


\begin{proof}
When the game ends, it always determines who is the winner. 
Therefore, it suffices to show that the game ends with at most $2^{2^{(C+1)|n|^{C}}}$-many transitions.
Since there are at most $2^{(2^{|n|^{C}})^{C+1}}$-many rooted trees of height $\leq C$ and $\leq 2^{|n|^{C}}$-branching as subsets of $T$, the result follows from Lemma \ref{monotone}.
\end{proof}

Note that, without any restriction, \textbf{Prover} easily wins $\mathcal{G}_{2}(n,C,T)$ for reasonable parameters:

\begin{prop}
Suppose $n,C \geq 2$, $2^{|n|^{C}} > n$.
Let $T$ be the $(n,C)$-tree as follows:
\[T:=[n+1]^{\leq 1} \cup \{i*1 \mid i \in [n+1]\}.\]

 Then there exists \deff{a winning strategy of} \textbf{Prover} \deff{for $\mathcal{G}_{2}(n,C,T)$}, that is, there is a pair $(f_{1},f_{2})$ of functions such that:
 \begin{enumerate}
  \item $f_{1} \colon \mathcal{P} \rightarrow (2^{P_{n} \dot\cup H_{n}})_{\leq |n|^{C}}$.
  \item $f_{2} \colon \mathcal{P} \times \mathcal{M}_{n} \rightarrow [3] \times [2^{|n|^{C}}]^{\leq C} \times [2^{|n|^{C}}]$, and $\langle o,x,B \rangle = f_{2}(L,M')$ satisfies $x \in [2^{|n|^{C}}]$ if $o=1$ and $x \subsetneq c(L)$ otherwise.
  \item If \textbf{Prover} keeps playing $Q=f_{1}(L)$ at stage \ref{query} in Definition \ref{DefG2} and, receiving \textbf{Delayer}'s answer $M'$ at stage \ref{answer}, plays $\langle o,x,B \rangle = f_{2}(L,M')$ at stage \ref{nextmove}, then \textbf{Prover} eventually wins $\mathcal{G}_{2}(n,C,T)$ regardless \textbf{Delayer}'s moves.
 \end{enumerate}
\end{prop}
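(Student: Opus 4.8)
The plan is to exhibit a winning strategy for \textbf{Prover} explicitly, using the $n+1$ children of the root of $T$ as independent ``registers'' in which \textbf{Delayer}'s commitments get recorded, and then to confront her with the impossibility of an injection $[n+1]\hookrightarrow[n]$.

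First I would set up an \emph{interrogation phase}. \textbf{Prover} makes one $o=1$ move from the root to the child $1$, and thereafter repeatedly uses the option $o=2$ with backtrack point $x=\emptyset$ to pass from child $i$ to child $i+1$; the point of $o=2$ here is that it retains the labels of $(1),\dots,(i)$ in the record while only resetting the frontier matching. While at child $i$, \textbf{Prover} queries a single pigeon chosen as a function of the labels already recorded. A \emph{rigidity} observation drives the whole argument: when the frontier is $(i)$, the only moves of \textbf{Prover} that do not immediately cost him the game are $o=1$ into $(i,1)$, $o=2$ into $(i+1)$, and --- when $(i-1)\in\dom(L)$ and $(i-1,1)\notin\dom(L)$ --- $o=3$ into $(i-1,1)$; hence \textbf{Delayer}'s answer $M'$ at that round must be consistent with both $L((i))$ and $L((i-1))$ on pain of an immediate loss. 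In particular, if the interrogated pigeon $p$ is already matched in $L((i))$, she is forced to repeat her earlier hole for $p$, so \textbf{Prover} can make the recorded values genuinely committed and, scanning all $n+1$ pigeons across the $n+1$ children, obtain committed holes $h_1,\dots,h_{n+1}\in[n]$.

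The finishing maneuver uses the pigeonhole principle on $h_1,\dots,h_{n+1}$: fix $a<b$ with $h_a=h_b=:\eta$ and distinct pigeons $\pi_a\neq\pi_b$ with $\pi_a\mapsto\eta$ recorded in $L((a))$ and $\pi_b\mapsto\eta$ recorded in $L((b))$. \textbf{Prover} then performs a sequence of $o=3$ backtracks which peels the record from the right and, via auxiliary re-interrogation, migrates the two conflicting facts into $o=3$-adjacent positions; once the frontier carries one of them and the other is the target of a legal $o=3$ move, he queries $Q=\{\eta\}$. Any minimal matching covering $\{\eta\}$ sends $\eta$ to a single pigeon, hence contradicts $L((a))$ or $L((b))$, and \textbf{Prover} picks whichever of $o=1$, $o=3$ checks $M'$ against the contradicted label; thus he wins.

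I expect the last step to be the main obstacle. Because $o=3$ reaches only one level back in the shallow tree $T$, \textbf{Prover} cannot directly set two records created far apart in the interrogation phase against each other, so the confrontation requires carefully interleaved walk-and-peel passes that bring the conflicting records adjacent while denying \textbf{Delayer} any chance to revise her commitments in the meantime; one must also check throughout that \textbf{Prover} never leaves $T$ and that the size constraint $|M|\leq|n|^C\times height(v)$ on labels is respected. Making this bookkeeping work --- or replacing it by a cleaner global argument that \textbf{Delayer} cannot possibly have a winning strategy, given the determinacy from Corollary \ref{determinacy} --- is the technical heart of the proof.
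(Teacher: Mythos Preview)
Your approach mirrors the paper's closely. Both run an interrogation phase that fills the children $(1),\ldots,(n+1)$ via an initial $o=1$ move followed by repeated $o=2$ moves with $x=\emptyset$, recording one pigeon--hole pair per child; both then invoke the pigeonhole principle to locate a collision $h_a=h_b$ and use repeated $o=3$ backtracks to confront the two records.

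The differences are in the confrontation. The paper re-queries the \emph{pigeon} carrying the contradiction (the maximal index $a$ with $\{p_a\mapsto h\}\subseteq L(c(L))$ clashing with some leaf) and plays $o=3$ unconditionally each time; you instead propose querying the shared \emph{hole} $\eta$, then choosing between $o=1$ and $o=3$ depending on which recorded label \textbf{Delayer}'s answer contradicts. Your rigidity observation---that at a height-$1$ frontier $(i)$ \textbf{Delayer}'s answer is pinned against both $L((i))$ and $L((i-1))$, because \textbf{Prover} sees $M'$ before committing to $o$---is correct and does not appear in the paper; note, however, that it breaks down at height $2$ (where $o=1$ is illegal in $T$), which is exactly where your ``walk-and-peel'' bookkeeping lives.

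On that confrontation step the paper is considerably terser than you: it simply declares that the chain of $o=3$ moves carries the information back and that any positions not covered by the described cases are unreachable, without spelling out the migration you (rightly) flag as the nontrivial part. So your caution is well placed, but the paper does not supply the missing bookkeeping either---it just asserts the strategy is winning ``because of the assumption on the parameters.'' In short: same plan, your write-up is more honest about where the work is, while the paper is more confident and more concrete about the actual moves.
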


\begin{proof}
Let $P_{n} = \{p_{1}, \ldots, p_{n+1}\}$.
Informally, \textbf{Prover}'s winning strategy is as follows:
\begin{itemize}
 \item Taking option $o=1$, ramify at the root asking the query $\{p_{1}\}$.
   The root will have $(n+1)$-many children.
 \item One by one, taking option $o=2$, ask queries $\{p_{2}\}, \ldots, \{p_{n+1}\}$. There are enough vertices to do it.
 \item Because of the pigeonhole principle, there will be $a<a'$ such that \textbf{Delayer}'s answers $p_{a} \mapsto h$ and $p_{a'} \mapsto h'$ share the same hole: $h=h'$.
 Once detecting the pair, repeatedly take option $o=3$ and bring back the information $p_{a'} \mapsto h$ to the vertex storing $p_{a} \mapsto h$.
\end{itemize}

Now, we give a formal presentation.

For the initial position, set 
\[f_{1}(L_{0}) := \{p_{1}\},\ f_{2}(L_{0},M) := \langle 1,1, * \rangle,\]
where $*$ can be any.

Let $L$ be a position with $height(c(L))=1$.
If $L(c(L))$ does not contradict $L(l)$ for any leaf $l$ of $\dom(L)$, set
\[f_{1}(L) := \{p_{a+1}\}\]
where $a:=c(L) \in [n+1]$.

Let $M'$ be a partial matching of the form $\{p_{a+1} \mapsto h\}$.
Set
\[f_{2} (L,M') := \langle 2, \emptyset, * \rangle.\]

Let $L$ be a position with $height(c(L)) \geq 1$.
 If there exists a leaf $l$ of $\dom(L)$ such that $L(l) \perp L(c(L))$, then let $a \in [n+1]$ be the maximum index such that $\{p_{a} \mapsto h\} \subseteq L(c(L))$ contradicts $L(l)$.
Set
\[f_{1}(L) := \{p_{a}\}\]
and 
\[f_{2} (L,\{p_{a} \mapsto h\}) := \langle 3, \emptyset, * \rangle.\]

Regardless values $f_{1}(L)$ and $f_{2}(L,M')$ for other $L$ and $M'$, $(f_{1},f_{2})$ is a winning strategy for \textbf{Prover} because of the assumption on the parameters.
\end{proof}

\begin{rmk}
We can see that the part $\vec{A}$ of the label of positions $L \in \mathcal{P}$ does not play any role in the proof above.
Actually, $\vec{A}$ is not essential at all for our use of $\mathcal{G}_{2}$.
See Appendix \ref{Auxiliary info not needed}.
However, the auxiliary information $\vec{A}$ is helpful to describe the obliviousness in the Definition \ref{obliviousstrategy} and in the proof of Theorem \ref{ProofIsStrategy2}.
\end{rmk}

Thus, to obtain a nontrivial and possibly useful notion, we must consider a specific limited class of \textbf{Prover}'s strategies.
The following ``obliviousness'' is our suggestion (roughly speaking, it restricts \textbf{Prover} to refer the information of $c(L)$ and $L(c(L))$ only):

\begin{defn}\label{obliviousstrategy}
Let $n,C \in \NN$.
Suppose $T$ is an $(n,C)$-tree.
\deff{An oblivious strategy of} \textbf{Prover} \deff{for $\mathcal{G}_{2}(n,C,T)$} is a pair $(f_{1},f_{2})$ of functions such that:
\begin{itemize}
 \item $f_{1} \colon \bigcup_{l=0}^{C}\left([2^{|n|^{C}}]^{l} \times \mathcal{M}_{n} \times [2^{|n|^{C}}]^{l} \right)\rightarrow (2^{P_{n} \dot\cup H_{n}})_{\leq |n|^{C}}$.
 \item $f_{2} \colon \bigcup_{l=0}^{C}\left([2^{|n|^{C}}]^{l} \times \mathcal{M}_{n} \times [2^{|n|^{C}}]^{l} \times \mathcal{M}_{n}\right) \rightarrow [3] \times [2^{|n|^{C}}]^{\leq C} \times [2^{|n|^{C}}]$, and $\langle o,x,B \rangle = f_{2}(v,M,\vec{A},M')$ satisfies $x \in [2^{|n|^{C}}]$ if $o=1$ and $x \subsetneq v$ otherwise.
\end{itemize}
\end{defn}

An important point of Theorem \ref{ProofIsStrategy2} is that, while \textbf{Prover} is restricted to oblivious strategies, \textbf{Delayer} has no restriction; she can see the whole position $L$ and make a decision:
\begin{defn}\label{Delayer's strategy}
\deff{A strategy of} \textbf{Delayer} \deff{for $\mathcal{G}_{2}(n,C,T)$} is a function $g$ such that:
\begin{itemize}
 \item $g \colon \mathcal{P} \times (2^{P_{n} \dot\cup H_{n}})_{\leq |n|^{C}} \rightarrow \mathcal{M}_{n}$, and $g(L,Q)$ is a minimal partial matching covering $Q$ for each $(L,Q)$ in the domain.
\end{itemize}

\end{defn}

\begin{defn}\label{strategybeatsanother}
 Let $g$ be \textbf{Delayer}'s strategy for $\mathcal{G}_{2}(n,C,T)$, and $(f_{1},f_{2})$ be an oblivious \textbf{Prover}'s strategy for $\mathcal{G}_{2}(n,C,T)$.
 \deff{$(f_{1},f_{2})$ beats $g$} if and only if \textbf{Delayer} wins $\mathcal{G}_{2}(n,C,T)$ if the two players play in the following way:
 \begin{enumerate}
  \item \textbf{Prover} plays $Q=f_{1}(v,M,\vec{A})$ at stage \ref{query} in Definition \ref{DefG2}, where $v:=c(L), (M,\vec{A}) :=L(v)$.
  \item \textbf{Delayer} answers $M'=g(L,Q)$ at stage \ref{answer}.
  \item \textbf{Prover} then plays $\langle o,x,B \rangle = f_{2}(v,M,\vec{A},M')$ at stage \ref{nextmove}.
 \end{enumerate}
\end{defn}

Now, we prove a counterpart of Proposition \ref{ProofIsStrategy1} for $\mathcal{G}_{2}$.
The proof can be regarded as a transposition of Buss's witnessing argument (\cite{Buss}, \cite{clearwitnessing}).

\begin{thm}\label{ProofIsStrategy2}
Suppose $ontoPHP^{n+1}_{n}$ has $2^{|n|^{O(1)}}$-sized $LK^{*}_{2+\frac{1}{2},O(1)}$-proofs.
Then, there exists $C>0$ such that, for any sufficiently large $n$, there exists an $(n,C)$-tree $T$ such that \textbf{Prover} has an oblivious strategy $(f_{1}, f_{2})$ for $\mathcal{G}_{2}(n,C,T)$ which beats arbitrary \textbf{Delayer}'s strategy $g$.
\end{thm}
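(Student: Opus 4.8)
The plan is to carry out, inside $\mathcal{G}_{2}$, the $\Sigma^{b}_{2}$-witnessing argument of Buss (\cite{Buss}, \cite{clearwitnessing}), just as the proof of Theorem \ref{ProofIsStrategy1} carries out the $\Sigma^{b}_{1}$-witnessing inside $\mathcal{G}_{1}$ --- only now with one extra quantifier level, which is precisely what the backtracking options $o=2,3$ are meant to absorb. First, by Proposition \ref{ParisWilkieTranslation}, fix a constant $C$ (depending on the $O(1)$ of the hypothesis) so that for all large $n$ there is an $LK^{*}_{2+\frac{1}{2},C}$-derivation $\pi=(\mathcal{T},\mathcal{S})$ of $ontoPHP^{n+1}_{n}$ with $|\pi|\le 2^{|n|^{C}}$; by Remark \ref{subformulaproperty} every formula of $\pi$ is $s\Sigma_{2+\frac{1}{2}}(2^{|n|^{C}})$, $s\Pi_{2+\frac{1}{2}}(2^{|n|^{C}})$, or a subformula of one of the $ontoPHP^{n+1}_{n}$-formulae, so cedents of $\pi$ contain only $p\Sigma_{0}$-, $p\Sigma_{1}$-, $p\Pi_{1}$-, $p\Sigma_{2}$-, $p\Pi_{2}$-formulae and subformulae of the $ontoPHP^{n+1}_{n}$-formulae. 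After re-indexing the children of every vertex by an initial segment of $\NN$, $\mathcal{T}$ is an $(n,C)$-tree, and we take $T:=\mathcal{T}$.

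Prover's intended reading of a position $L$ with $(M,\vec{A})=L(c(L))$ is: $c(L)$ is his current vertex in $\pi$, $\rho^{M}$ is the partial assignment built so far, and $\vec{A}$ encodes a partial counterexample function $W$ (in the sense of Definition \ref{forcingforcedents}, extended to $p\Pi_{2}$-formulae) on the $p\Pi_{1}$- and $p\Pi_{2}$-formulae of $\mathcal{S}(c(L))$, each assigned an index of its outermost $\bigwedge$; this fits the $\le C|n|^{C}$ bits of $\vec{A}$ since a cedent has $\le C$ formulae and an index is $\le |n|^{C}$ bits. I would maintain along every consistent play an invariant stating, roughly, that every $p\Sigma_{0}$-formula of $\mathcal{S}(c(L))$ is falsified by $\rho^{M}$; every $p\Pi_{1}$-formula is falsified by $(M,W)$ in the strong sense of Definition \ref{forcingforcedents}; every $p\Pi_{2}$-formula $\bigwedge_{j_{1}}\bigvee_{j_{2}}\overline{\psi_{j_{1}j_{2}}}$ is only \emph{promised} false via its $W$-index $j_{1}$, meaning $j_{1}$ is consistent with $M$ and no $j_{2}$ with $\psi_{j_{1}j_{2}}$ false has yet surfaced; and every proper ancestor of $c(L)$ in $T$, and every vertex of $\dom(L)$, occurred as the frontier of an earlier recorded position --- the $\mathcal{G}_{2}$-analogue of the second clause of Definition \ref{IH}. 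A stateful (disjunctive) refinement of this invariant, with a probe-counter stored in $\vec{A}$ and $B$, will be needed for the sweeps described next.

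The strategy $(f_{1},f_{2})$ is then defined by cases on the last inference of $\mathcal{S}(c(L))$, in parallel with the proof of Theorem \ref{ProofIsStrategy1}: initial cedents are unreachable ($x$ and $\overline{x}$ cannot both be false), so the strategy is never invoked there; for $\lor$, $\land$, Trivial Cut, and the $p\Sigma_{1}$-case of the $\bigvee$-rule Prover queries the $\le|n|^{C}$ variables deciding the relevant $p\Sigma_{0}$-subformula and climbs with $o=1$, or --- if that disjunct comes out true --- backtracks with $o=2$ to the appropriate ancestor-premise where the containing $p\Sigma_{1}$- or $p\Sigma_{2}$-formula was introduced going up (an Induction premise, or a $\bigwedge$-premise if it is the $\bigvee$-conjunct of a $p\Pi_{2}$-formula), updating $W$; for a $\bigvee$-rule on one of the $ontoPHP^{n+1}_{n}$-formulae Prover queries the indicated pigeon(s)/hole(s), records the answer into $W$, and climbs with $o=1$; for a $\bigvee$-rule introducing a $p\Pi_{1}$-disjunct of a (non-$ontoPHP$) $p\Sigma_{2}$-formula and the ensuing $\bigwedge$-rule on it, Prover makes a left-to-right sweep through that $\bigwedge$'s premises via repeated $o=2$ moves, tentatively tolerating the invariant's failure on the conjunct currently probed until a false conjunct is hit (then stored in $W$) --- and if every conjunct comes out true, the $p\Sigma_{2}$-formula is true, so Prover backtracks with $o=3$ to the deepest explored vertex of the preceding premise of the $p\Sigma_{2}$-Induction ancestor that eliminated it and reinstalls $W$; for $\bigwedge$-rules and the ``right'' premises of $p\Sigma_{2}$-Induction Prover climbs with $o=1$ to the child indexed by the stored $W$-value; and for $p\Sigma_{2}$-Induction $\Gamma,\varphi_{1}\mid\Gamma,\overline{\varphi_{1}},\varphi_{2}\mid\cdots\mid\Gamma,\overline{\varphi_{I}}$ he enters the leftmost premise with $o=1$, the passage from the subtree above premise $i-1$ into premise $i$ (which needs $\overline{\varphi_{i-1}}$ freshly falsified) being triggered when $\varphi_{i-1}$ becomes known true there and realized by $o=2$ with $x$ the Induction vertex, carrying the witness into $W$. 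One then checks that, so long as Delayer never answers with an $M'$ clashing with the stored matching, the invariant is preserved, each prescribed move is legal on $T$ (the needed children, right siblings and deepest-explored vertices exist, and the invariant keeps Prover off leaves), $\dom(L)$ strictly increases in the order $\prec$ of Lemma \ref{monotone}, and Prover never triggers a loss; by Corollary \ref{determinacy} the play therefore halts, necessarily at a Prover-win (forced when Delayer's answer finally clashes with a matching recorded during the handling of some $ontoPHP^{n+1}_{n}$-formula). Obliviousness is automatic, $f_{1},f_{2}$ depending only on $c(L)$, $L(c(L))=(M,\vec{A})$ and the answer $M'$, i.e., being functions on $\bigcup_{l}[2^{|n|^{C}}]^{l}\times\mathcal{M}_{n}\times[2^{|n|^{C}}]^{l}$ (with one more $\mathcal{M}_{n}$ for $f_{2}$) as in Definition \ref{obliviousstrategy}.

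The step I expect to be the main obstacle is the $p\Pi_{2}$-bookkeeping: proving that the $\le C|n|^{C}$ bits of $\vec{A}$, together with $B$, genuinely suffice to drive the correct $o=2$/$o=3$ backtracks --- that every probing sweep is finite and ends either with a false conjunct or with a legal $o=3$ backtrack, that each such backtrack lands on a vertex of $T$ whose cedent the (stateful) invariant can be re-established on, and that the composite of all these local moves never pushes Prover off $T$ before Delayer is caught and never cycles. Equivalently, the heart of the proof is to verify that the ``$\Sigma^{b}_{2}$ student--teacher'' computation implicit in Buss's witnessing can be faithfully simulated by the memory-bounded, tree-erasing moves $o\in\{1,2,3\}$ of $\mathcal{G}_{2}$ on the fixed tree $T$, with the strict $\prec$-progress of Lemma \ref{monotone} simultaneously serving as the termination and no-cycling argument. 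This is exactly the point at which $\mathcal{G}_{2}$'s backtracking must be used in full, and where the bulk of the (routine but lengthy) case analysis resides.
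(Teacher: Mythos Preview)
Your overall plan---take $T=\mathcal{T}$, encode counterexamples for the $p\Pi_{1}$- and $p\Pi_{2}$-formulae of the current cedent in $\vec{A}$, and do a case analysis on the last inference at $c(L)$---is exactly the paper's. The gap is in how you obtain a counterexample to a freshly introduced $p\Pi_{1}$-disjunct $\Psi_{\alpha_{0}}=\bigwedge_{\beta}\chi_{\alpha_{0}\beta}$ of a non-$ontoPHP$ $p\Sigma_{2}$-formula $\Phi_{e}$. Your left-to-right sweep through the conjuncts at the ensuing $\bigwedge$-rule evaluates each $\chi_{\alpha_{0} j}$ under a \emph{different} extension $M_{u}\cup M'_{j}$ of the matching, so ``every conjunct true'' does not certify $\Psi_{\alpha_{0}}$ true under any single assignment, and the single $M'$ carried by a subsequent backtrack is not a usable witness at the destination. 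Even granting that inference, a true $\Psi_{\alpha_{0}}$ witnesses $\Phi_{e}$ and should send Prover \emph{forward} to the next Induction premise via $o=2$, not back via $o=3$; symmetrically, the moment you assign $o=2$ to---a $p\Sigma_{0}$-disjunct of a $p\Sigma_{1}$-formula coming out true---is precisely where the paper uses $o=3$.

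The paper performs no sweep. At the $\bigvee$-rule on $\Phi_{e}$ introducing $\Psi_{\alpha_{0}}$, Prover immediately plays $o=2$ with $x=w$ (the Induction vertex) and $B=\alpha_{0}$, jumping to $w*(e{+}1)$ with $\alpha_{0}$ recorded merely as a \emph{candidate} counterexample to the $p\Pi_{2}$-formula $\overline{\Phi_{e}}$ there. The genuine $p\Pi_{1}$-counterexample is then supplied by the proof structure in that next premise: when a $\bigvee$-rule on the $p\Sigma_{1}$-conjunct $\overline{\Psi_{\alpha_{0}}}=\bigvee_{\beta}\overline{\chi_{\alpha_{0}\beta}}$ introduces some $\overline{\chi_{\alpha_{0}\beta_{0}}}$ and the query reveals it \emph{satisfied}, Prover plays $o=3$ with $x=w$ and $B=\beta_{0}$. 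The crucial invariant---missing from your proposal---is that whenever $w*(e{+}1)\in\dom(L)$, the maximal leaf of $\dom(L)$ extending $w*e$ is exactly that $\bigvee$-rule vertex on $\Phi_{e}$ whose introduced disjunct has index $A_{height(w)+1}$ recorded at $w*(e{+}1)$; this is what guarantees the $o=3$ move lands on its child with $\Psi_{\alpha_{0}}$ freshly in the cedent, $B=\beta_{0}$ as its counterexample, and $M'$ falsifying $\chi_{\alpha_{0}\beta_{0}}$. In short, the counterexample search is delegated to the proof in the adjacent Induction premise rather than done by a local probe, and the directions of $o=2$ and $o=3$ are the reverse of what you wrote.
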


\begin{proof}
 Our approach is similar to the sketch given in the last section.
 The main differences are:
 \begin{itemize}
  \item Since we treat $LK^{*}_{2+\frac{1}{2},O(1)}$-proofs this time, the witness function $W[P]$ will be defined on not only the $s\Pi_{1}$-formulae but also the $p\Pi_{2}$-formulae.
  It will be represented by $\vec{A}$ in the following argument.
  \item When we consider $\bigvee$-Rule: 
   \begin{prooftree}
 \AxiomC{$\Gamma, \varphi_{i_{0}}$} \RightLabel{\quad (where $\bigvee_{i=1}^{I}\varphi_{i} \in \Gamma$, $1\leq i_{0} \leq I$, $\varphi_{i_{0}} \not \in \Gamma$)}
  \UnaryInfC{$\Gamma$}
 \end{prooftree}
$\bigvee_{i=1}^{I}\varphi_{i}$ is either in $ontoPHP^{n+1}_{n}$ or $p\Sigma_{1}$ or $p\Sigma_{2}$.
In the first case, the arguments are the same.
In the last case, we argue as if $\varphi_{i_{0}}$ was a satisfied $p\Sigma_{0}$-formula in the case when $\bigvee_{i=1}^{I}\varphi_{i}$ was $p\Sigma_{1}$ in the sketch.
If $\bigvee_{i=1}^{I}\varphi_{i}$ is $p\Sigma_{1}$, then \textbf{Prover} makes a query to decide $\varphi_{i_{0}}$, which is $p\Sigma_{0}$, and if it is falsified, he just climbs up the proof tree, but if it is satisfied, then he backtracks to the point where $\bigvee$-Rule is applied to eliminate $\bigwedge_{i=0}^{k}\overline{\varphi_{i}}$.
Now he has a counterexample $i_{0}$ for it and restarts the play from the point, climbing up the $\bigvee$-Rule which he once went through assuming $\bigwedge_{i=0}^{k}\overline{\varphi_{i}}$ was true.
 \end{itemize} 
 
  Now, we present a precise proof.
  By assumption, there exists $C>0$ such that, for any sufficiently large $n$, there exists an $LK^{*}_{2+\frac{1}{2},C}$-derivation $\pi=(\mathcal{T},\mathcal{S})$ of $ontoPHP^{n+1}_{n}$ satisfying $|\pi| \leq 2^{|n|^{C}}$.
 By an isomorphism, we may assume that $\mathcal{T}$ is an $(n,C)$-tree.
 Note that each $\varphi$ appearing in $\pi$ is $s\Sigma^{2^{|n|^{C}},|n|^{C}}_{2}$ or $s\Pi^{2^{|n|^{C}},|n|^{C}}_{2}$ by Remark \ref{subformulaproperty}.
 For readability, we omit the superscripts and write $s\Pi_{i}$, $s\Sigma_{i}$, $p\Pi_{i}$ and so on below.
 
 We extract an oblivious winning strategy of \textbf{Prover} for $\mathcal{G}_{2}(n,C,\mathcal{T})$ from this $\pi$.
 The strategy sticks to play so that, for each position $L$,

 \begin{itemize}
 \item Given $v \in \dom(L)$, let $L(v) = (M,A_{1},\ldots,A_{height(v)})$. 
 %Let $\mathcal{S}(v) = \{\varphi_{1}, \cdots, \varphi_{k}\}$.
 Note that $height(v) \leq C$ by our formalization of $LK_{2+\frac{1}{2},C}^{*}$.
 Let $\varphi \in \mathcal{S}(v)$.
  
According to the complexity of $\varphi$, the following hold:
 \begin{enumerate}
  \item If $\varphi$ is $s\Sigma_{0}$, then $M \Vdash \overline{\varphi}$.
  \item Consider the case when $\varphi$ is $p\Pi_{2}$. 
  Let $\varphi = \bigwedge_{e=1}^{E}\psi_{e}$, where each $\psi_{e}$ is $p\Sigma_{1}$.
  It must be eliminated by $p\Sigma_{2}$-Induction along the path from $v$ to the root in $\pi$. 
  Let $w \subseteq v$ be the vertex such that $\mathcal{S}(w)$ is derived by $p\Sigma_{2}$-Induction and $\varphi$ is eliminated there.
  Then, $A_{height(w)+1} \leq k$.

 \item\label{counterexample} Consider the case when $\varphi$ is $p\Pi_{1}$.
 Let $\varphi = \bigwedge_{e=1}^{E}\psi_{e}$, where each $\psi_{e}$ is $s\Sigma_{0}$.
 $\varphi$ must be eliminated along the path from $v$ to the root in $\pi$, and the derivation rule must be $\bigvee$-Rule.
 
 Let $u \subseteq v$ be the vertex such that $\mathcal{S}(u)=\Gamma$ is derived by:
    \begin{prooftree}
 \AxiomC{$\Gamma, \varphi_{i_{0}}$} \RightLabel{\quad (where $\bigvee_{i=1}^{I}\varphi_{i} \in \Gamma$, $\varphi = \varphi_{i_{0}}$, $1\leq i_{0} \leq I$, $\varphi \not \in \Gamma$)}
  \UnaryInfC{$\Gamma$}
 \end{prooftree}
 
 Since every formula appearing in $\pi$ is $s\Sigma_{2}$ or $s\Pi_{2}$, we have that $\bigvee_{i=1}^{I}\varphi_{i}$ is $p\Sigma_{2}$, and therefore it must be an element of $ontoPHP^{n+1}_{n}$ or eliminated by $p\Sigma_{2}$-Induction along the path from $u$ to the root in $\pi$.
 
 In the former case, if 
 \begin{align*}
 \bigvee_{i=1}^{I}\varphi_{i}&= \bigvee_{p \in [n+1]} \bigwedge_{h \in [n]} \lnot r_{ph}, \\
 \varphi&= \bigwedge_{h \in [n]} \lnot r_{i_{0}h},
 \end{align*}
  then $A:=A_{height(u)+1} \in [n]$.
 Moreover, $M \Vdash \overline{\lnot r_{i_{0}A}}$.
 If 
 \begin{align*}
 \bigvee_{i=1}^{I}\varphi_{i} &= \bigvee_{h \in [n]} \bigwedge_{p \in [n+1]} \lnot r_{ph},\\
 \varphi&= \bigwedge_{p \in [n+1]} \lnot r_{pi_{0}},
 \end{align*} 
 then $A:=A_{height(u)+1} \in [n+1]$.
 Furthermore, $M \Vdash \overline{\lnot r_{A i_{0}}}$.

 In the latter case, then, $A:=A_{height(u)+1} \leq I$, and $M \Vdash \overline{\psi_{A}}$.
 \end{enumerate}
  \textit{Intuitively, according to the strategy}, \textbf{Prover} \textit{always stores a counterexample for each $p\Pi_{1}$-formula $\varphi \in \mathcal{S}(v)$ and a candidate of a counterexample for each $p\Pi_{2}$-formula, under $\rho^{M}$, in $\vec{A}$}.

% \item For each leaf $l \in \dom(L) \setminus \{cL\}$, the derivation rule for $\mathcal{S}(l)$ is $\bigvee$-Rule and the principal formula is $s\Sigma_{2}$.
 \item For each $w \in \dom(L)$, if $\mathcal{S}(w)=\Gamma$ is derived by $p\Sigma_{2}$-Induction:
     \begin{prooftree}
 \AxiomC{$\Gamma, \varphi_{1}$}
 \AxiomC{$\Gamma, \overline{\varphi_{1}},\varphi_{1}$}
 \AxiomC{$\cdots$}
 \AxiomC{$\Gamma, \overline{\varphi_{m-1}},\varphi_{m}$}
  \AxiomC{$\Gamma, \overline{\varphi_{m}}$}
  \QuinaryInfC{$\Gamma$}
 \end{prooftree}
 then $w*(k+1) \in \dom(L)$ implies $w*k \in \dom(L)$ (note that $k \in [m]$).
 Furthermore, if $w*(k+1) \in \dom(L)$, then, for the maximum leaf $l \in \dom(L)$ properly extending $w*k$, $\mathcal{S}(l)=\Delta$ is derived by $\bigvee$-Rule
    \begin{prooftree}
 \AxiomC{$\Delta, \psi_{i_{0}}$} \RightLabel{\quad ($\bigvee_{i=1}^{I}\psi_{i} \in \Delta$, $1\leq i_{0} \leq I$)}
  \UnaryInfC{$\Delta$}
 \end{prooftree}
  and the principal formula $\bigvee_{i=1}^{I}\psi_{i}$ is the $\varphi_{k}$ eliminated from $\mathcal{S}(w*k)$ deriving $\mathcal{S}(w)$ (therefore each $\psi_{i}$ is $p\Pi_{1}$).
  Furthermore, if $L(w*(k+1))=(M,\vec{A})$, then $A_{height(w*(k+1))} = i_{0}$.
  
  \item For each non-leaf $w \in \dom(L)$, if $\mathcal{S}(w)=\Gamma$ is derived by $\bigwedge$-Rule:
      \begin{prooftree}
 \AxiomC{$\Gamma, \varphi_{1}$}
 \AxiomC{$\Gamma, \varphi_{2}$}
 \AxiomC{$\cdots$}
 \AxiomC{$\Gamma, \varphi_{I}$}
  \RightLabel{\quad (where $\bigwedge_{i=1}^{I}\varphi_{i} \in \Gamma$, and $\varphi_{i} \not \in \Gamma$ for each $i \in [I]$)}
  \QuaternaryInfC{$\Gamma$}
 \end{prooftree} 
 then $w$ has a unique child in $\dom(L)$.
 If $\bigwedge_{i=1}^{I}\varphi_{i}$ is $p\Pi_{1}$ or $p\Pi_{2}$ in particular, then, letting $A$ be the counterexample (or the candidate of a counterexample)  $A_{height(u)+1}$ for it described in (\ref{counterexample}) at the first item, the unique child is $w*A$.
 
 %\item For each non-leaf $w \in \dom(L)$, if $\mathcal{S}(w) = \Gamma$ is derived by Trivial Cut:
  % \begin{prooftree}
 %\AxiomC{$\Gamma, 0$} 
 % \UnaryInfC{$\Gamma$}
 %\end{prooftree}
 
 \end{itemize}
 
 Now, we describe the strategy explicitly. 
 Suppose we are at a position $L$ satisfying the above conditions.
 Note that the initial position trivially satisfies them.
 Let 
 \[v:=c(L),\ (M,\vec{A}):=L(v).\]
 
 We describe $f_{1}(v,M,\vec{A})$ and $f_{2}(v,M,\vec{A},M')$ splitting cases by the rule deriving $\mathcal{S}(v)$ in $\pi$.
 Note that, in each case, the conditions above remain satisfied. 
 $\mathcal{S}(v)$ is not an Initial cedent since if it was the case, then the literals $x$ and $\bar{x}$ in $\mathcal{S}(v)$ should be falsified by $M$, which is absurd.

 \begin{enumerate}
 \item The case when $\mathcal{S}(v)=\Gamma$ is derived by $\lor$-Rule: 
   \begin{prooftree}
 \AxiomC{$\Gamma, \varphi_{i_{0}}$} \RightLabel{\quad (where $\varphi_{1} \lor \varphi_{2} \in \Gamma$, $i_{0}=1$ or $i_{0}=2$, $\varphi_{i_{0}} \not \in \Gamma$)}
  \UnaryInfC{$\Gamma$}
 \end{prooftree}
 Since $\varphi_{1} \lor \varphi_{2}$ is $s\Sigma_{2}$ or $s\Pi_{2}$, it must be $s\Sigma_{0}$.
Set $f_{1}(v,M,\vec{A}):=\emptyset$. 
 \textbf{Delayer}'s answer should be $\emptyset$, and so set
 \[f_{2}(v,M,\vec{A},\emptyset):= \langle 1, 1, * \rangle,\]
 where $*$ can be any number in $[2^{|n|^{C}}]$. 

  \item The case when $\mathcal{S}(v)=\Gamma$ is derived by Trivial Cut or $\land$-rule:
  similar as above.
  \item The case when $\mathcal{S}(v)=\Gamma$ is derived by $p\Sigma_{2}$-Induction:
    \begin{prooftree}
 \AxiomC{$\Gamma, \varphi_{1}$}
 \AxiomC{$\Gamma, \overline{\varphi_{1}},\varphi_{2}$}
 \AxiomC{$\cdots$}
 \AxiomC{$\Gamma, \overline{\varphi_{I-1}},\varphi_{I}$}
  \AxiomC{$\Gamma, \overline{\varphi_{I}}$}
  \RightLabel{\quad (where each $\varphi_{i}$ is $p\Sigma_{2}$)}
  \QuinaryInfC{$\Gamma$}
 \end{prooftree}
 
 Set $f_{1}(v,M,\vec{A}):=\emptyset$. 
 \textbf{Delayer}'s answer should be $\emptyset$, and so set
 \[f_{2}(v,M,\vec{A},\emptyset):= \langle 1, 1, * \rangle,\]
 where $*$ can be any number in $[2^{|n|^{C}}]$. 
 
  \item The case when $\mathcal{S}(v)=\Gamma$ is derived by $\bigvee$-Rule:
     \begin{prooftree}
 \AxiomC{$\Gamma, \varphi_{i_{0}}$} \RightLabel{\quad (where $\bigvee_{i=1}^{I}\varphi_{i} \in \Gamma$, $1\leq i_{0} \leq I$, $\varphi_{i_{0}} \not\in\Gamma$)}
  \UnaryInfC{$\Gamma$}
 \end{prooftree}
 
 We further split cases according to the complexity of $\bigvee_{i=1}^{I}\varphi_{i}$.
  \begin{enumerate}
   \item First we consider the case when $\bigvee_{i=1}^{I}\varphi_{i} \in ontoPHP^{n+1}_{n}$. If
   \[\bigvee_{i=1}^{I}\varphi_{i} = \bigvee_{\substack{p \neq p^{\prime} \in [n+1],\\ h \in [n]}} (r_{ph} \land r_{p^{\prime}h}),\]
      let $i_{0}=\langle p,p^{\prime},h\rangle$.
   Then set $f_{1}(v,M,\vec{A}) := \{p,p^{\prime}\}$.
   \footnote{Actually, we may ask just the hole $h$ here, but if we were to consider the case when we deal with the injective pigeonhole principle $injPHP^{n+1}_{n}$ instead of $ontoPHP^{n+1}_{n}$ and would like to ask only pigeon-queries in the course of the corresponding game, the query $\{p,p^{\prime}\}$ is the right one. (note that we do not have the last two formulae of $ontoPHP^{n+1}_{n}$ in that case.)}

   Let $M'$ be \textbf{Delayer}'s answer.
   $M'$ always falsifies $\varphi_{i_{0}}$, so set 
   \[f_{2}(v,M,\vec{A},M') := \langle 1, 1,*\rangle.\]
   
   \quad If 
   \[\bigvee_{i=1}^{I}\varphi_{i}= \bigvee_{p \in [n+1]}\bigwedge_{h \in [n]} \lnot r_{ph}\]
   and $\varphi_{i_{0}} = \bigwedge_{h \in [n]} \lnot r_{i_{0}h}$,   
then set 
\[f_{1}(v,M,\vec{A}) := \{i_{0}\}.\]

   Let $M'=\{i_{0} \mapsto h_{0}\}$ be \textbf{Delayer}'s answer.
   $M'$ together with the witness $j_{0}$ falsifies $\varphi_{i_{0}}$, so set 
   \[f_{2}(v,M,\vec{A},M') := \langle 1, 1, h_{0}\rangle.\]
   
   If $\bigvee_{i=1}^{I}\varphi_{i}$ is one of other elements in $ontoPHP^{n+1}_{n}$, change the roles of pigeons and holes and play similarly as above.
   
   Note that \textbf{Prover} does not lose by choosing $o=1$ since $v*1 \in \mathcal{T}$.

   \item Else if $\bigvee_{i=1}^{I}\varphi_{i}$ is $p\Sigma_{2}$ and $\varphi_{i_{0}}$ is $p\Pi_{1}$, then, since $\bigvee_{i=1}^{I}\varphi_{i} \not \in ontoPHP^{n+1}_{n}$, there uniquely exists $w*e \subseteq v$ such that $\mathcal{S}(w)$ is derived by $p\Sigma_{2}$-Induction, and $\bigvee_{i=1}^{I}\varphi_{i}$ is eliminated there.
   By definitions,
   \[w*(e+1) \in \mathcal{T} \ \& \ \bigwedge_{i=1}^{I}\overline{\varphi_{i}} \in \mathcal{S}(w*(e+1)).\]
    Based on this, set 
    \[f_{1}(v,M,\vec{A}):= \emptyset, f_{2}(v,M,\vec{A},\emptyset) := \langle 2, w, i_{0}\rangle.\]
    Note that \textbf{Prover} does not lose by choosing $o=2$ since $w*(e+1) \in \mathcal{T}$.
    \item Else if $\bigvee_{i=1}^{I}\varphi_{i}$ is $p\Sigma_{1}$ and $\varphi_{i_{0}}$ is $s\Sigma_{0}$, then, let $f_{1}(v,M,\vec{A})$ be the set of pigeons\footnote{Similarly to before, we may also query holes here.} appearing as indices of variables occuring in $\varphi_{i_{0}}$, which is at most cardinality $\leq |n|^{C}$.
    
    Let $M'$ be \textbf{Delayer}'s answer. $M'$ decides $\varphi_{i_{0}}$, that is, 
    \[M' \Vdash \varphi_{i_{0}} \ \mbox{or} \ M' \Vdash \overline{\varphi_{i_{0}}}.\]
    If $M' \Vdash \overline{\varphi_{i_{0}}}$, then set 
    \[f_{2}(v,M,\vec{A},M') := \langle 1,1,* \rangle.\]
    If $M' \Vdash \varphi_{i_{0}}$, then, since $\bigvee_{i=1}^{I}\varphi_{i} \not \in ontoPHP^{n+1}_{n}$, there exists $w*e \subseteq v$ such that $\mathcal{S}(w)=\Theta$ is derived by $p\Sigma_{2}$-Induction:
        \begin{prooftree}
 \AxiomC{$\Theta, \Phi_{1}$}
 \AxiomC{$\Theta, \overline{\Phi_{1}},\Phi_{2}$}
 \AxiomC{$\cdots$}
 \AxiomC{$\Theta, \overline{\Phi_{E-1}},\Phi_{E}$}
  \AxiomC{$\Theta, \overline{\Phi_{E}}$}
  \RightLabel{\quad (where each $\Phi_{e}$ is $p\Sigma_{2}$)}
  \QuinaryInfC{$\Theta$}
  \end{prooftree}
    $e \geq 2$, and $\overline{\Phi_{e-1}}=\bigwedge_{\alpha}\bigvee_{\beta=0}^{B_{\alpha}}\psi_{\alpha\beta}$ has $\bigvee_{i=1}^{I}\varphi_{i}$ as one of its conjuncts, that is, there is $\alpha_{0}$ such that $\bigvee_{i=1}^{I}\varphi_{i} = \bigvee_{\beta=0}^{B_{\alpha_{0}}}\psi_{\alpha_{0}\beta}$.
    Furthermore, there exists $u$ with $w*e \subseteq u \subsetneq v$ such that $\Lambda=\mathcal{S}(u)$ is derived by $\bigwedge$-Rule:
          \begin{prooftree}
 \AxiomC{$\Lambda, \bigvee_{\beta=0}^{B_{1}}\psi_{1\beta}$}
 \AxiomC{$\Lambda, \bigvee_{\beta=0}^{B_{2}}\psi_{2\beta}$}
 \AxiomC{$\cdots$}
 \AxiomC{$\Lambda, \bigvee_{\beta=0}^{B_{m}}\psi_{m\beta}$}
    \QuaternaryInfC{$\Lambda$}
 \end{prooftree} 
 where $\overline{\Phi_{e-1}}=\bigwedge_{\alpha}\bigvee_{\beta=0}^{B_{\alpha}}\psi_{\alpha\beta} \in \Lambda$
 and $u*\alpha_{0} \subseteq v$.
 We set 
    \[f_{2}(v,M,\vec{A},M') := \langle 3, w, i_{0} \rangle.\]
    
    The resulting next position again satisfies the assumptions of the strategy.
 Indeed, by assumption, the counterexample $A_{height(w)+1}$ for $\overline{\Phi_{e-1}}$ stored at $w*e$ is $\alpha_{0}$. 
 We consider the maximum leaf $l \in \dom{L}$ extending $w*(e-1)$.
 Note that $w*(e-1) \in \dom(L)$ by assumption. 
  $\mathcal{S}(l)=\Delta$ is derived by the following $\bigvee$-Rule:
       \begin{prooftree}
 \AxiomC{$\Delta, \bigwedge_{\beta=0}^{B_{\alpha_{0}}}\overline{\psi_{\alpha_{0}\beta}}$} \RightLabel{\quad (where $\Phi_{e-1}= \bigvee_{\alpha}\bigwedge_{\beta=0}^{B_{\alpha}}\overline{\psi_{\alpha\beta}} \in \Delta$)}
  \UnaryInfC{$\Delta$}
 \end{prooftree}
The following figure shows the whole situation (the notation $@v$ shows the corresponding vertices):
 
 \begin{prooftree}\label{whenbacktrack}
  \AxiomC{$\cdots$}
  
 \AxiomC{$\vdots$}
  \UnaryInfC{$\Delta, \bigwedge_{\beta=0}^{B_{\alpha_{0}}}\overline{\psi_{\alpha_{0}\beta}} \quad (@ l*1)$}
  \UnaryInfC{$\Delta \quad (@ l)$}
  \UnaryInfC{$\vdots$}
  \UnaryInfC{$\Theta, \overline{\Phi_{e-2}}, \Phi_{e-1} \quad (@ w*(e-1))$}

 \AxiomC{$\cdots$}
 
 \AxiomC{$\vdots$}
 \UnaryInfC{$\Gamma, \varphi_{i_{0}} \quad (@ v*1)$}
 \UnaryInfC{$\Gamma \quad (@ v)$}
 \UnaryInfC{$\vdots$}
 \UnaryInfC{$\Lambda, \bigvee_{\beta=0}^{B_{\alpha_{0}}}\psi_{\alpha_{0}\beta} \quad (@ u*\alpha_{0})$}
  \AxiomC{$\cdots$}
 \TrinaryInfC{$\Lambda \quad (@ u)$}
  \UnaryInfC{$\vdots$}
  \UnaryInfC{$\Theta, \overline{\Phi_{e-1}}, \Phi_{e} \quad (@ w*e)$}
  
  \AxiomC{$\cdots$}
  \QuaternaryInfC{$\Theta \quad (@ w)$}
  
 \end{prooftree}
 Here, we disregard $\overline{\Phi_{e-2}}$ when $e=2$.
 
    Hence, recalling $\bigwedge_{\beta=0}^{B_{\alpha_{0}}}\overline{\psi_{\alpha_{0}\beta}} = \bigwedge_{i=1}^{I}\overline{\varphi_{i}}$ and $M' \Vdash \overline{\varphi_{i_{0}}}$, the claim follows.
    
    \item Otherwise, $\bigvee_{i=1}^{I}\varphi_{i}$ is $p\Sigma_{0}$ and $M$ already falsifies it, so set 
    \[f_{1}(v,M,\vec{A}) := \emptyset, f_{2}(v,M,\vec{A},\emptyset) := \langle 1, 1, * \rangle.\]
    
  \end{enumerate}

  \item  The case when $\mathcal{S}(v)=\Gamma$ is derived by $\bigwedge$-Rule:
     \begin{prooftree}
 \AxiomC{$\Gamma, \varphi_{1}$}
 \AxiomC{$\Gamma, \varphi_{2}$}
 \AxiomC{$\cdots$}
 \AxiomC{$\Gamma, \varphi_{I}$}
  \RightLabel{\quad (where $\bigwedge_{i=1}^{I}\varphi_{i} \in \Gamma$, and $\varphi_{i} \not \in \Gamma$ for each $i \in [I]$)}
  \QuaternaryInfC{$\Gamma$}
 \end{prooftree}
 If $\bigwedge_{i=1}^{I}\varphi_{i}$ is $s\Sigma_{0}$, it is already falsified, hence there exists $k$ such that $M \Vdash \overline{\varphi_{k}}$. 
Let $k$ be the least one, and set
 \[f_{1}(v,M,\vec{A}) := \emptyset, f_{2}(v,M,\vec{A},\emptyset) := \langle 1,k,* \rangle.\]
 
 If $\bigwedge_{i=1}^{I}\varphi_{i}$ is $p\Pi_{1}$ or $p\Pi_{2}$,
 By assumption, a counterexample $A_{e}$ for it is stored in $\vec{A}$.
 Set 
 \[f_{1}(v,M,\vec{A}) = \emptyset, f_{2}(v,M,\vec{A},\emptyset) := \langle 1,A_{e},* \rangle,\]
Note that $M \Vdash \overline{\varphi_{A_{e}}}$ if $\bigwedge_{i=1}^{I}\varphi_{i}$ is $p\Pi_{1}$.  
 
 \end{enumerate}
 
 This completes the description of \textbf{Prover}'s strategy. 
 Since \textbf{Prover} can continue the play as long as \textbf{Delayer} does not make a contradiction, and $\dom(L)$ strictly increases in terms of $\prec$ by Lemma \ref{monotone}, if \textbf{Delayer} wins, $c(L)$ reaches to one of Initial cedents, which is absurd since an Initial cedent can never be falsified.
 Hence, \textbf{Prover}'s strategy above is a winning one.
\end{proof}

\section{Analysis of simplified $\mathcal{G}_{2}$}\label{Analysis of simplified G2}
\quad As far as we see, analysis of general $\mathcal{G}_{2}(n,C,T)$ is rather difficult.

Thus, in this section, we focus on $\mathcal{G}_{2}(n,C,T)$ of minimal height $C$ allowing backtracking, namely, $C=2$.

Even this case is hard to analyze, so we also assume that all the queries in the play are of size $1$.
Moreover, we further restrict \textbf{Prover}'s oblivious strategy $(f_{1},f_{2})$ to the following ones:
\begin{enumerate}
 \item Let $s \in [2^{|n|^{C}}]$.
 \item There exists a map 
 \[\mathcal{A} \colon (\mathcal{M}_{n})_{\leq 1} \rightarrow P_{n}\]
  such that, for $v \in [2^{|n|^{C}}]^{\leq 1}$, $f_{1}(v,M,A) = \{\mathcal{A}(M)\}$.
 \item Let $P_{n} = \{p_{1}, \ldots, p_{n+1}\}$. Let $p_{i} = \mathcal{A}(\emptyset)$.
 Then, for any $h \in H_{n}$,
 \[f_{2}(\emptyset, L_{0}(\emptyset), \{p_{i} \mapsto h\}) = \langle 1, 1, 1\rangle.\]
 \item For $v \in [2^{|n|^{C}}]^{1}$, $M,M' \in \mathcal{M}_{n}$ of size $1$, and $A \in [s]$, 
 \begin{align*}
  f_{2}(v,M,A,M') =
  \begin{cases}
   \langle 2,\emptyset, A+1 \rangle \quad &\mbox{(if $A<s$)}\\
   \langle 3,\emptyset, k \rangle \quad &\mbox{(if $A=s$, and $M'$ covers $p_{k}$)}
  \end{cases}.
  \end{align*}
 \item For $v \in [2^{|n|^{C}}]^{2}$, $M \in \mathcal{M}_{n}$ of size $2$, and $A_{1},A_{2} \in [s]$, if $A_{2} \in [n+1]$ and $M$ matches $p_{A_{2}}$ to $h \in H_{n}$, then 
 \begin{align*}
  f_{1}(v,M,A_{1},A_{2}) = \{p_{A_{2}}\},
 \end{align*}
 and 
 \begin{align*}
  f_{2}(v,M,A_{1},A_{2}, \{p_{A_{2}} \mapsto h\}) = \langle 3,\emptyset, A_{2} \rangle.
 \end{align*}
\end{enumerate}
Note that values of $f_{1}$ and $f_{2}$ for other inputs do not matter to determine the winner.
This very limited ``toycase'' of $\mathcal{G}_{2}(n,C,T)$ is still nontrivial since $s$ may be greater than $n$, which gives advantage to \textbf{Prover}. Actually, it turns out that the bound of $s$ above is not essential in this situation.

Below we present the mentioned restriction in a self-contained manner, with notation and terminology consistent with the previous sections.

\begin{defn}
    Fix an integer $n \geq 1$. We define the following \deff{game} (denoted as $\mathcal{G}_{2}(n)$) played by two players: \textbf{Prover} (denoted as \P and referred to as \textit{he}) and \textbf{Delayer} (denoted as \D and referred to as \textit{she}). 
    
    \D claims the existence of an injective function $f$ from the set $\{0, ..., n\}$ (denoted as $P_{n}$) into the set $\{0, ..., n-1\}$ (denoted as $H_{n}$). We think of $P_{n}$ as a set of \textit{pigeons} and $H_{n}$ as a set of \textit{holes}. 
    
    Through the course of the game \P asks \D questions of the form ``Where is the pigeon $p \in P_{n}$ being mapped to by $f$" (we denote such question as $p$ itself) and \D must answer with ``$p$ is being mapped to $h$ by $f$" for some $h \in H_{n}$ (we denote such answer as $h$ itself). These answers are then stored by \P in the form $(p, h)$.

    \PP's goal in the game is to force \D into giving two contradictory answers. This means \P wins iff \D answers $h$ to the question $p$, while \PP's list of records contains either $(p', h)$ or $(p, h')$ for $p' \neq p$ and $h' \neq h$.

    At each moment during the game, \P can only store at most 2 records in his list. This means that before asking a new question \P must make room for an upcoming answer from \DD. Moreover, \P must always remove the oldest record. 
    
    Finally, before the game starts, \P chooses an integer parameter $s \geq 1$ which \D sees, as well. As soon as the total number of the given answers in a game reaches $s$, the game stops, and the final answer of \D is compared to \textit{all} the records that have appeared through the whole game. If there is a single contradiction of the form $(p, h), (p', h)$ or $(p, h), (p, h')$ for $p' \neq p$ and $h' \neq h$ and with $p$ being the last question and $h$ being the last answer, \P wins, otherwise \D wins.  
\end{defn}

\begin{defn}
    A \textbf{\textit{strategy}} for \P is a pair $(s, F)$ of an integer $s \geq 1$ and a function $F$ which on input the empty set (the initial game state) or a single record $(p, h)$ (\PP's list after the removal of an older record) outputs a question $p'$ which \P should ask next. 
    
    We say \P plays a game according to the strategy $(s, F)$ iff his zeroth move is to claim $s$ as the game's initial parameter, his first move is to ask a question $F(\emptyset)$ and, finally, after the removal of an older record and being left with a single record $(p, h)$, if the game is not yet finished, \P asks $F((p, h))$.  
    
    From now on $F((p, h))$ will be denoted simply as $F(p, h)$.

    We say a strategy $(s, F)$ is \deff{winning} for \P iff, for plays in which \D never gives two contradictory answers in a row, any last answer of \D in a game where \P plays according to $(s, F)$ contradicts at least one prior answer of \D.    
\end{defn}

The ultimate goal is to show that for any $n \geq 3$, \P has no winning strategies. However, before this, we show two simple cases when \P can win.

\begin{prop}
    For $n \leq 2$ there is a winning strategy for \P with $s$ being any number $\geq n + 1$.
\end{prop}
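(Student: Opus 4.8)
The plan is to give explicit winning strategies for \P in the two cases $n=1$ and $n=2$ separately, exploiting in both cases the fact that at the moment \P is about to ask a new question he retains exactly one record, so his next question depends only on that single record (this is what makes the oblivious/automaton-like strategies of the previous section actually available here). Throughout, recall that the only plays that matter for the definition of a winning strategy are the \emph{completed} ones in which \D never answers two mutually contradictory records on consecutive rounds; all other plays are excluded, so it suffices to show that in every such play \PP's last answer clashes with one of \PP's records from earlier in the game.

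For $n=1$ there is a single hole, namely $0$, so \D is forced to answer $0$ every time. I would fix any $s\geq 2$ and let \P first ask pigeon $0$ and then ask pigeon $1$. \D is then forced to produce the two records $(0,0)$ and $(1,0)$, which are mutually contradictory (same hole, distinct pigeons), and these are given on consecutive rounds. Since this is forced no matter what \D does, \emph{every} completed play following this strategy contains two contradictory answers in a row, so there are simply no relevant completed plays, and the winning condition is satisfied vacuously. (Equivalently: with one hole \D cannot survive two distinct pigeon-questions, so \P wins outright.)

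For $n=2$, fix any $s\geq 3$ and let \P ask the pigeons cyclically: $F(\emptyset)$ asks pigeon $0$, and after being left with a record about pigeon $i$ he asks pigeon $i{+}1 \bmod 3$, so that at round $k$ he asks pigeon $(k-1)\bmod 3$. I would argue that in any relevant play \D is forced to alternate between the two holes. Indeed, at round $k\geq 2$ the unique record \P retains concerns pigeon $(k-2)\bmod 3$, which is different from pigeon $(k-1)\bmod 3$; hence if \D repeated her previous hole she would produce two contradictory records on consecutive rounds, which is excluded. Writing $h_k$ for the $k$-th answer, this forces $h_k\neq h_{k-1}$ for all $k\geq 2$, and since there are only two holes, $h_k=h_{k-2}$ for all $k\geq 3$. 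Now consider round $s$: the game stops and the last answer is compared with \emph{all} records produced during the game; among these are $\bigl((s-3)\bmod 3,\ h_{s-2}\bigr)$ and $\bigl((s-1)\bmod 3,\ h_s\bigr)$, which share the same hole (as $h_{s-2}=h_s$) but have distinct pigeons (since $(s-1)-(s-3)=2$ is not divisible by $3$). As the second of these pairs is \PP's last question together with \PP's last answer, this is precisely a contradiction of the required form, so \P wins; and this works for every $s\geq 3=n+1$.

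I expect the main delicate point to be not any genuine combinatorics but the bookkeeping around \PP's sliding two-record window together with the ``excluded play'' convention. Concretely, one must check: (i) in a relevant play the in-game victory condition (a fresh answer clashing with a record \P is currently holding) never fires before round $s$, so the game really does run all the way to round $s$ — this is immediate from $h_k\neq h_{k-1}$ and the distinctness of consecutive questions; (ii) at every round \D genuinely has a unique relevant response, so the ``forced alternation'' really is the whole play and not merely one branch of it; and (iii) the endgame comparison is against all records ever produced, not just the one (or two) currently retained, which is exactly what licenses the clash between round $s$ and round $s-2$ even though the round-$(s-2)$ record was discarded long before. Each of these is routine, but together they are where a careless version of the argument would break.
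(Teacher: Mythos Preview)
Your proof is correct. For $n=1$ your argument coincides with the paper's. For $n=2$ you take a genuinely different route: the paper asks the pigeons $0,1,2$ once each and then repeats the question $2$ indefinitely, so that after round~$3$ \D is locked into the single answer $h_3$, and one checks directly that $h_3$ must equal $h_1$ (the only hole other than $h_2$), giving a contradiction with the round-$1$ record that persists for all $s\ge 3$. You instead cycle $0,1,2,0,1,2,\dots$ and extract the clash from the forced two-periodicity $h_k=h_{k-2}$ of \D's answers. Both arguments are equally elementary; the paper's has the minor advantage that the contradiction is always with the \emph{first} record and requires no modular bookkeeping, while yours has the pleasant feature that the strategy is a single uniform rule $F(i,h)=(i{+}1)\bmod 3$ with no special handling of later rounds.
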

\begin{proof}
    We start with the case $n = 2$. 
    
    Let $s$ be $3$. \P starts by asking $0 \in P_{2}$. Then, no matter what the answer is, \P asks $1 \in P_{2}$. Since \D cannot give two contradictory answers, after the second step, \PP's list should contain records $(0, h)$ and $(1, h')$ with $h \neq h'$. 
    
    \P removes the older record $(0, h)$ and asks $2 \in P_{2}$. Since $s = 3$, the game stops after \D answers that question. Such an answer is of the form $h''$, but since $H_{n}$ is of size 2 and $h \neq h' \in H_{n}$, it follows that $h'' = h$ or $h'' = h'$. Since we assumed \D never gives two contradictory answers in a row, it follows $h'' = h$ and so the record $(2, h'')$ contradicts the record $(0, h)$.

    For $s > 3$ it is enough to start as before, but after the 3rd answer, \P continues to ask $2 \in P_{2}$ throughout the rest of the game. Since \D cannot give two contradictory answers, she will be giving the same answer for the rest of the game and we have seen that the corresponding record $(2, h'')$ necessarily contradicts the first record $(0, h)$.

    For the remaining case $n = 1$, \P can win even before the game reaches $s$, since asking first $0 \in P_{1}$ and then $1 \in P_{1}$ necessary results in a contradiction.
\end{proof}

In the definition of the game we have assumed $P_{n} = \{0, ..., n\}$ and $H_{n} = \{0, ..., n-1\}$. For the proposition, we consider a more general case with the gap between the sizes of $P_{n}$ and $H_{n}$ much larger than 1.

\begin{prop}
    Let $P_{n}$ be $\{0, ..., 2^{n} - 1\}$ and $H_{n}$ be $\{0, ..., n-1\}$. Then, for any $n \geq 1$ \PP, has a winning strategy with $s$ being any parameter $\geq n + 1$.
\end{prop}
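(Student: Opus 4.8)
The plan is to exploit the exponential gap between $|P_n| = 2^n$ and $|H_n| = n$ and turn the pigeon indices themselves into \PP's memory. Since the strategy function $F$ is handed only the single record that survives the mandatory deletion, $F$ cannot remember the whole history; but it can be fed a pigeon index into which we have secretly encoded \emph{the set of holes that have appeared so far}. Concretely, fix the bijection $\sigma$ sending a subset $S \subseteq H_n$ to $\sum_{i \in S} 2^i \in P_n$, and let \PP choose $s := n+1$ together with the strategy $F(\emptyset) := \sigma(\emptyset) = 0$ and $F(p,h) := \sigma(\sigma^{-1}(p) \cup \{h\})$. Informally: when the surviving record is $(p,h)$, \PP asks the pigeon coding ``the holes seen so far, now including $h$''.

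First I would isolate the invariant: in any play in which \PP follows $(s,F)$, if $p_0, p_1, \dots$ are the questions and $h_0, h_1, \dots$ the answers, then $p_k = \sigma(S_k)$ with $S_k := \{h_0, \dots, h_{k-1}\}$ (so $S_0 = \emptyset$); this is immediate from the definition of $F$. I would also note the exact freedom \DD has: when answering $h_k$ (for $k \geq 1$) the only pair she must avoid contradicting is the unique surviving record $(p_{k-1}, h_{k-1})$, and $p_k = p_{k-1}$ holds precisely when $h_{k-1} \in S_{k-1}$, i.e. when her previous answer repeated an earlier hole.

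Next comes the dichotomy, applied to a play in which \DD never gives two contradictory answers in a row. If the $s \geq n+1$ answers $h_0, \dots, h_{s-1}$ were pairwise distinct they would be more than $n$ distinct elements of $H_n$ — impossible — so there is a least index $k^{\ast}$ with $h_{k^{\ast}} \in S_{k^{\ast}}$, and $k^{\ast} \leq s-1$. One checks $k^{\ast} \geq 2$ (if $k^{\ast} = 1$ then $h_1 = h_0$, but $p_1 = \sigma(\{h_0\}) \neq \sigma(\emptyset) = p_0$ forces $h_1 \neq h_0$) and that at step $k^{\ast}$ the observed set is strictly enlarged by $h_{k^{\ast}-1}$, so $p_{k^{\ast}} \neq p_{k^{\ast}-1}$ and \DD really is allowed to reuse a hole there. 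The point is that $h_{k^{\ast}} \in S_{k^{\ast}}$ gives $S_{k^{\ast}+1} = S_{k^{\ast}}$, hence $p_{k^{\ast}+1} = p_{k^{\ast}}$: from then on \PP keeps asking the same pigeon, so \DD is pinned and must keep repeating $h_{k^{\ast}}$; by induction $p_k = p_{k^{\ast}}$ and $h_k = h_{k^{\ast}}$ for all $k \geq k^{\ast}$, so $p_{s-1} = \sigma(S_{k^{\ast}})$ and $h_{s-1} = h_{k^{\ast}}$. Letting $j < k^{\ast}$ be the first index with $h_j = h_{k^{\ast}}$, freshness gives $S_j \subsetneq S_{j+1} \subseteq S_{k^{\ast}}$, hence $p_j = \sigma(S_j) \neq \sigma(S_{k^{\ast}}) = p_{s-1}$ while $h_j = h_{s-1}$; thus the final pair and the earlier record $(p_j, h_j)$ form a contradiction of the shape $(p,h),(p',h)$, so $(s,F)$ is winning, uniformly for every $s \geq n+1$. (For $n = 1$ the strategy already forces \DD to contradict at her second answer, so the statement is vacuous there.)

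The genuine work, rather than the crux — which is just the encoding $\sigma$ — is the verification that \DD cannot escape: one must check that a ``fresh'' answer always strictly grows $S_k$ so that the register truly records new information, that the register collapses (\DD becomes pinned to a single pigeon) the instant a hole is repeated, and that the bookkeeping with the surviving record and the off-by-one in the indices matches the precise meaning of ``two contradictory answers in a row''. I expect all of this to go through routinely once the invariant $p_k = \sigma(S_k)$ is established.
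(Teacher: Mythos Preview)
Your proposal is correct and is essentially the paper's own argument: both encode the set of holes seen so far as the pigeon index (the paper labels pigeons directly by subsets of $H_n$, you pass through the bijection $\sigma$), define $F$ by $S \mapsto S \cup \{h\}$, observe that the first repeated hole freezes the query, and then locate an earlier record $(S_j, h_j)$ with $S_j \subsetneq S_{k^\ast}$ and $h_j = h_{k^\ast}$ contradicting the final one. Your write-up is slightly more explicit about the bookkeeping (verifying $k^\ast \geq 2$ and that \DD is genuinely pinned once the loop starts), but the content is the same.
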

\begin{proof}
    Fix $s = n + 1$ as before. We let the set $P_{n}$ be labeled by all the subsets of $H_{n}$. The first question is $\emptyset$ (i.e., \P queries the hole for the pigeon labeled as $\emptyset \subset H_n$). 
    
    At step $i > 1$, assume that the record of \P is $(S, h)$, where $S \subseteq H_n$. In case $h \in S$, \P queries $S$ again. We can w.l.o.g. assume \D answers $h$; otherwise, \P wins immediately.

    In case $h \notin S$, \P queries $S \cup \{h\}$.

    We show that if \D ever answers $h$ to a query $S$, so that $h \in S$, \P wins the game. Assume this happens first time at step $i > 1$. Consider the sequence of records accumulated so far. It is of the form $(S_1, h_1), (S_2, h_2), \dots (S_i, h_i)$, where, by induction and minimality of $i$, we have that $S_{j + 1} = S_j \cup \{h_j\}$ and $S_{j + 1} \supset S_j$ for $j < i$. Moreover, $S_1 = \emptyset$. This all implies the existence of $j < i$ such that $h_j = h_i$, while $S_j \subset S_i$ and so, in particular, $S_j \neq S_i$. In other words, records $(S_j, h_j)$ and $(S_i, h_i)$ are inconsistent. Finally, since $h_i \in S_i$, it follows that all the subsequent queries of \P are equal to $S_i$, while \DD's answers to them are equal to $h_i$. In particular, the last query of \P in the game is $S_i$, and the subsequent answer of \D is $h_i$, which contradicts the record $(S_j, h_j)$.
    
    To finish the proof, it is enough to show that there must be a step $i \leq s$ at which \D answers $h$ such that $h \in S$. This holds since each time \D answers $h$, which is outside of the queried set $S$, the next set queried by \P is of strictly greater size than $S$. Since $H_n$ is of size $n$, the above can happen at most-$n$ times in total, while the game is played for $n + 1$-many rounds.
\end{proof}

We leave as an open question whether one can get the gap between the sizes of $P_{n}$ and $H_{n}$ smaller, even by a constant.

\begin{question}
    Do there exist integers $c \geq 2$ and $n_{0}, s_{0}$ such that, for all $n \geq n_{0}$ and $s \geq s_{0}$, \P has a winning strategy for a game played with $P_{n} = \{0, ..., 2^{n} - c\}$ and $H_{n} = \{0, ..., n-1\}$?
\end{question}

For the rest of the paper, we consider the case with $P_{n} = \{0, ..., n\}$ and $H_{n} = \{0, ..., n-1\}$ as in the original definition. We already know that \P can win in a game for $n \leq 2$. What we are going to show is that for any $n \geq 3$, \P cannot win. The proof can be viewed either as a proof-by-contradiction, where we show that no strategy can be a winning one, or as a construction of an \textit{anti-strategy} for \D given a fixed strategy for \PP. Here, anti-strategy is defined as a strategy for \D, which moreover depends on a chosen strategy for \PP.

\begin{prop}
\label{canonical anti-strategy}
    Let $n \geq 3$ and $(s, F)$ be a strategy for \PP. If $s \leq n$, then there is a winning anti-strategy for \DD. Moreover, such anti-strategy does not depend on $F$.
\end{prop}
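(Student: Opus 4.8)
The plan is to exhibit one explicit strategy for \D that defeats \emph{every} \PP-strategy $(s,F)$ with $s\le n$; since it works uniformly against all such $(s,F)$, it is in particular a winning anti-strategy that does not mention $F$, giving the ``moreover'' clause for free. As in the analysis of $\mathcal{G}_{1}$ in \S\ref{A review of unprovability of the pigeonhole principle over T12}, \DD's task amounts to surviving the $\le n$ many singleton queries of the game while staying consistent with \emph{all} records ever produced, and for $s\le n$ this is easy: \D simply pretends to possess a partial injection and extends it greedily. Precisely, \D maintains a partial injection $\sigma$ from $P_{n}$ to $H_{n}$, starting from the empty function; on a question $p$ she answers $\sigma(p)$ if $p\in\dom(\sigma)$, and otherwise sets $\sigma(p)$ to be the least element of $H_{n}\setminus\ran(\sigma)$ and answers that value. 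This reads off only from the play so far (which determines $\sigma$) and never from $F$.

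I would then verify two points. \emph{Legality:} after step $i$ the set $\dom(\sigma)$ has at most $i$ elements (one pigeon is added per step), so whenever \D must extend $\sigma$ at a step $i\le s\le n$ we have $\#\ran(\sigma)=\#\dom(\sigma)\le i-1\le n-1<\# H_{n}$, hence $H_{n}\setminus\ran(\sigma)\neq\emptyset$ and the move is defined; in particular the game never breaks down before the $s$-th answer is given. \emph{Consistency:} once a value of $\sigma$ is set it is never changed, so, writing $(p_{j},h_{j})$ for the $j$-th question-answer pair, each $h_{j}$ equals $\sigma(p_{j})$ evaluated in the final state of $\sigma$; hence the records $(p_{j},h_{j})$ are pairwise consistent, since $p_{j}=p_{k}$ forces $h_{j}=h_{k}$ while injectivity of $\sigma$ turns $p_{j}\neq p_{k}$ into $h_{j}\neq h_{k}$. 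This one fact blocks every way \P can win: no answer clashes with the immediately preceding one, so \P never wins during play and the game does reach step $s$; and the last answer is consistent with all earlier records, so \P does not win at the stopping point either. Therefore \D beats $(s,F)$, i.e.\ $(s,F)$ is not winning, which proves the statement.

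The whole combinatorial content is the single inequality $\#\ran(\sigma)=\#\dom(\sigma)\le i-1<n$ available at every step $i\le s\le n$; the rest is bookkeeping. The only place where care is genuinely needed is to keep apart the two ways \P can win --- an immediate clash with the preceding answer \emph{during} the play, and a clash of the \emph{final} answer with \emph{some} earlier record at the very end --- and to observe that one \emph{global} partial injection handles both at once (the end-of-game requirement being strictly stronger than just avoiding in-a-row clashes, because \PP's two-record memory hides old records mid-game even though they resurface in the final comparison). I do not expect a real obstacle: this is the easy half of the target statement ``\P has no winning strategy for $n\ge 3$'', and the hypothesis $s\le n$ is precisely what keeps the greedy assignment from exhausting $H_{n}$.
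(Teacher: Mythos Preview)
Your proposal is correct and essentially identical to the paper's proof: both have \DD\ maintain a partial injection by answering a fresh hole to each new pigeon and repeating the stored answer for any pigeon already seen, then observe that $s\le n$ guarantees the supply of fresh holes never runs out, so all records are pairwise consistent and the final answer cannot contradict any earlier one. Your write-up is slightly more explicit in separating the mid-game and end-game winning conditions for \PP, but the argument is the same.
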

\begin{proof}
    The anti-strategy is very simple. Being presented with a question $p$, \D answers $h$ so that
    \begin{enumerate}
        \item if \P has not asked $p$ before, then $h$ is any hole from $H_{n}$ which has not been used by \D yet; in case there is no such hole, \D ``gives up" and answers 0;
        \item if \P has asked $p$ before, then $h$ is the same as the one \D answered to $p$ for the first time. 
    \end{enumerate}
    Since $s \leq n$, \D will never run out of holes and so will never give up before the game ends. Notice that through the whole game, all answers of \D are \textit{compatible} (i.e. are not contradictory) with any previous answers. This shows that the $s$th answer is not contradictory, as well. 
\end{proof}

\begin{defn}
    Any anti-strategy for \D which satisfies conditions 1. and 2. from the above proof is called a \deff{canonical anti-strategy}.
\end{defn}

\begin{prop}
\label{loop}
    Let $n \geq 3$ and $(s, F)$ be a strategy for \PP. Assume \D is played according to a canonical anti-strategy and case 2. from the proof of \ref{canonical anti-strategy} occurs at step $i \leq \min\{s, n\}$, i.e., \P queries the pigeon $p$ which he has queried before. Then, there is a winning anti-strategy for \D against $(s', F)$, where $s'$ is arbitrary.
\end{prop}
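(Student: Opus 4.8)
The plan is to exploit the fact that, once \D commits to a canonical anti-strategy, the whole play becomes deterministic and hence eventually periodic, and that a repeated query occurring by step $\min\{s,n\}$ confines the periodic part to a set of at most $n-1$ pigeons, so that \D can afford to keep answering in a conflict-free way for arbitrarily long.

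First I would record the deterministic structure of the play. Write $h(p)$ for the hole \D returns to pigeon $p$ at its first occurrence; by condition 2 of a canonical anti-strategy she returns $h(p)$ at \emph{every} occurrence of $p$. Then a play in which \P follows $(s,F)$ and \D is canonical is completely described by the sequence of queried pigeons $p_{1} = F(\emptyset)$ and $p_{k+1} = F(p_{k}, h(p_{k}))$ for $k \geq 1$ (the records held by \P at step $k$ being $(p_{k-1}, h(p_{k-1})), (p_{k}, h(p_{k}))$, with the oldest then deleted); the parameter $s$ only dictates when the play halts. I would fix the first-answer choices $h(p_{1}), \dots, h(p_{i-1})$ actually made in the given run against $(s,F)$ and let $\sigma'$ be a canonical anti-strategy that reuses exactly those choices on $p_{1}, \dots, p_{i-1}$ (its choices on other pigeons are immaterial). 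Against $(s',F)$, for any $s'$, the anti-strategy $\sigma'$ then produces the very same sequence $(p_{k})_{k\geq1}$.

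Next I would analyse the loop. By hypothesis case 2 occurs at step $i \leq \min\{s,n\}$, i.e.\ $p_{i} = p_{j}$ for some $j < i$; since $p\mapsto F(p,h(p))$ is a function, $p_{i+t} = p_{j+t}$ for all $t \geq 0$, so the sequence is periodic of period $i-j$ from index $j$ on and every term $p_{k}$ with $k \geq j$ lies in $\{p_{j}, \dots, p_{i-1}\}$. Hence the set of all pigeons ever queried is contained in $\{p_{1}, \dots, p_{i-1}\}$, of size at most $i-1 \leq n-1 < n = |H_{n}|$. Therefore $\sigma'$ never exhausts $H_{n}$, never invokes the ``give up'' clause, and all its records have the form $(p, h(p))$ with $p \mapsto h(p)$ injective on the queried pigeons; in particular no two of its records ever contradict.

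Finally I would conclude: fix an arbitrary $s'$, let \P play $(s',F)$ and \D play $\sigma'$. \D never gives a contradictory answer, and when the play stops after the $s'$-th answer that answer $(p_{s'}, h(p_{s'}))$ is compatible with every earlier record by injectivity of $p\mapsto h(p)$; so $\sigma'$ is a winning anti-strategy against $(s',F)$. I expect the only delicate point to be the bookkeeping of the second paragraph — ensuring $\sigma'$ replays the same first-answer choices, so that the pigeon sequence and the repetition at step $i$ survive the replacement of $s$ by $s'$; once the sequence is pinned down, the inequality $i-1 < n$ does all the work.
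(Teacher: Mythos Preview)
Your argument is correct and follows essentially the same route as the paper: both observe that once a repeat $p_{i}=p_{j}$ occurs while \D plays canonically, the map $p\mapsto F(p,h(p))$ forces the query sequence to be eventually periodic with all records pairwise compatible, so \D never exhausts $H_{n}$ and wins for arbitrary $s'$. Your explicit bookkeeping about fixing $\sigma'$ to replay the same first-answer choices when replacing $s$ by $s'$ is a useful clarification that the paper leaves implicit.
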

\begin{proof}
    The claim follows immediately from the proof of \ref{canonical anti-strategy}. Namely, if the play is such that \D can satisfy conditions 1. and 2. of the canonical anti-strategy and is never forced to give up, then \D wins.

    As was noted, in case $s \leq n$, it is always possible for \D to carry out the chosen canonical anti-strategy without giving up, while it might not be possible if $s > n$.

    However, under the assumptions of the current proposition, we claim that \D is able to proceed without ever needing to give up. To see this, let $i \leq \min\{s, n\}$ be such that the sequence of records accumulated so far is of the form $(p_1, h_1), \dots, (p_i, h_i)$, with $p_i = p_j$ for $j < i$, and all $(p_k, h_k), (p_l, h_l)$ compatible for $k, l \leq i$. 
    
    It follows that $h_i = h_j$. Since $F$ depends solely on the last record, we derive that $p_{i + 1} = p_{j + 1}$, and $h_{i + 1} = h_{j + 1}$, since \D is playing according to a canonical anti-strategy.

    The claim now follows, since after the $i$th step, the game ``loops", i.e. the sequence of records accumulated by \P is of the form 
    \begin{align*}
        (p_1, h_1), \dots, (p_j, h_j), (p_{j + 1}, h_{j + 1}), \dots, (p_i, h_i), (p_{j + 1}, h_{j + 1}), \dots, (p_i, h_i), (p_{j + 1}, h_{j + 1}), \dots,
    \end{align*}
    where all the records are pairwise compatible.
\end{proof}

We now need a better way to represent \PP's strategies. This will help us to construct anti-strategies for \DD.

\begin{defn}
    Let $(s, F)$ be a strategy for \PP. We call $F$ a \deff{functional part} of $(s, F)$.
    
    A \deff{graph} associated with $F$ is a directed labeled multigraph (with loops allowed) $\mathcal{G}_{F} = (G_{F}, E_{F}, l_{F})$, defined as follows. The set $G_{F}$ of nodes equals $P_{n}$, and for each $p \in P_n$ and $h \in H_n$, there is exactly one edge from $p$ to $p'$ labeled as $h$ by $l_F$ such that $F(p, h) = p'$.

    Two edges $e, e'$ going from $p$ and $p'$, respectively, are \deff{incompatible} (or \deff{inconsistent}) iff either of the following conditions is satisfied
    \begin{itemize}
        \item $p = p'$ and $l_{F}(e) \neq l_{F}(e')$;
        \item $p \neq p'$ and $l_{F}(e) = l_{F}(e')$. 
    \end{itemize}

    We say that a node $p \in G_{F}$ is an \deff{initial node} iff $p = F(\emptyset)$. 
    
    A \deff{path} in $\mathcal{G}_{F}$ is defined as a directed walk in $\mathcal{G}_{F}$, which starts at the initial node. This means the same edges and nodes may appear multiple times in a path. 
    
    A path's \deff{length} is defined as the number of edges. 
    
    We say that a path is \deff{locally consistent}, iff any two subsequent edges are compatible. 
    
    A path is called \deff{globally consistent}, iff any two edges of the path are consistent. 
    
    An edge of a path is called \deff{globally consistent}, iff it is consistent with all the preceding edges of the path.
\end{defn}

Notice that two edges $e, e'$ going from $p, p'$, respectively, are consistent iff the set $\{(p, l_{F}(e)), (p', l_{F}(e'))\}$ is a partial one-to-one mapping from $P_{n}$ to $H_{n}$. 

Figure \ref{fig1} illustrates a simple example of $\mathcal{G}_{F}$ for $n = 3$.

\begin{figure}[h!t]
\centering
\begin{tikzpicture}[->, >=stealth, semithick, node distance=4cm, initial text=" "]
    \node[state] (q0) {0};
    \node[state, right of=q0] (q1) {1};
    \node[state, below of=q1] (q2) {2};
    \node[state, below of=q0] (q3) {3};

    \draw   (q0) edge[above, bend left=15] node{0} (q1)
            (q0) edge[above, bend right=15] node{1} (q1)
            (q0) edge[left] node{2} (q3)
            (q1) edge[right, bend left=15] node{0} (q2)
            (q1) edge[right, bend right=15] node{1} (q2)
            (q1) edge[above, bend right=15] node{2} (q3)
            (q2) edge[loop right] node{0} (q2)
            (q2) edge[loop below] node{1} (q2)
            (q2) edge[above, bend right=15] node{2} (q3)
            (q3) edge[above, bend right=15] node{0} (q1)
            (q3) edge[above, bend right=15] node{1} (q2)
            (q3) edge[loop left] node{2} (q3);
\end{tikzpicture}
\caption{- example of $\mathcal{G}_{F}$}
\label{fig1}
\end{figure}

Fixing a strategy $(s, F)$ of \P and assuming \D never gives two contradictory answers in a row, we can identify a play in the game with a locally consistent path in $\mathcal{G}_{F}$ of length $s$. A play results in \DD's victory, iff the last edge of the mentioned path is globally consistent. Thus, there is a winning anti-strategy for \D against $(s, F)$, iff there is a locally consistent path in $\mathcal{G}_{F}$ of length $s$ with the last edge being globally consistent. In particular, there is a winning canonical anti-strategy for \D against $(s, F)$, iff there is a globally consistent path in $\mathcal{G}_{F}$ of length $s$.

We can now formulate the main theorem of this section in the following way.

\begin{thm}
\label{main}
    Let $n \geq 3$ and $(s, F)$ be a strategy for \PP. Then, there is a locally consistent path in $\mathcal{G}_{F}$ of length $s$ with the last edge being globally consistent.
\end{thm}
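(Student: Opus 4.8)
The plan is to split according to the size of $s$ and, in the hard regime, to run the canonical anti-strategy until it gets stuck and then repair it by a single controlled deviation.

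First, if $s \le n$ then Proposition~\ref{canonical anti-strategy} already produces a globally consistent path of length $s$ in $\mathcal{G}_{F}$, whose last edge is in particular globally consistent; so assume $s \ge n+1$. Fix a run of the canonical anti-strategy, let $p_{1} \to p_{2} \to \cdots$ be the induced walk and $h_{i}$ the $i$-th answer of \DD. If this walk revisits a pigeon at some step $i \le n = \min\{s,n\}$, then Proposition~\ref{loop} yields a globally consistent path of any length, in particular one of length $s$; the same holds for a first revisit at step $n+1$, since then $p_{1},\dots,p_{n}$ are distinct, all of $H_{n}$ has been used on them, and from step $n+1$ on the play is deterministic and loops through these globally consistent records. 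So I may assume $p_{1},\dots,p_{n}$ are pairwise distinct and $p_{n+1}$ equals the unique pigeon $p^{\ast}$ missing among them; then $\{h_{i}:i\le n\}=H_{n}$, the walk $p_{1}\xrightarrow{h_{1}}\cdots\xrightarrow{h_{n}}p^{\ast}$ is Hamiltonian, the records form a partial injection $\mu:=\{(p_{i},h_{i}):i\le n\}$, and at step $n+1$ \D is stuck: every hole conflicts with exactly one record of $\mu$.

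The core step repairs this with one deviation. Given a hole $h\neq h_{n}$, set $q:=\mu^{-1}(h)$ and consider the continuation in which \D answers $h$ at $p^{\ast}$ and afterwards keeps answering the hole already recorded for whatever pigeon is queried (and $h$ again whenever $p^{\ast}$ recurs, as long as this is locally consistent). Along such a walk the only inconsistency is between $(p^{\ast},h)$ and $(q,h)$, and after an initial segment the play is deterministic, hence eventually periodic; so if the resulting cycle contains a pigeon $r\notin\{p^{\ast},q\}$, then \D wins, looping until step $s$ and stopping at $r$, whose record $(r,\mu(r))$ is consistent with every earlier record. For $h=h_{i}$ with $i\le n-1$ one checks that this cycle is confined to $\{p^{\ast},p_{i}\}$ only if $F(p^{\ast},h_{i})=p^{\ast}$: otherwise the continuation leaves $\{p^{\ast},p_{i}\}$, and since from $p_{i}$ its recorded hole $h_{i}$ leads to $p_{i+1}\notin\{p^{\ast},p_{i}\}$, the cycle must meet a pigeon outside $\{p^{\ast},p_{i}\}$. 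Thus \D wins unless $F$ is forced into the degenerate pattern $F(p^{\ast},h_{i})=p^{\ast}$ for all $i\le n-1$, i.e. $p^{\ast}$ has self-loops on $n-1$ of its $n$ labels.

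I expect the main obstacle to be exactly this degenerate configuration, and handling it is where the hypothesis $n\ge 3$ is essential. The remedy is to deviate one step earlier: at $p_{n}$, \D answers some $h_{j}$ with $j\le n-2$ rather than the last fresh hole $h_{n}$; this leaves $h_{n}$ unused and moves to $F(p_{n},h_{j})$. If that pigeon is $p^{\ast}$, \D then answers the still-free $h_{n}$, which is consistent with every earlier record; if it is $p_{i}$ with $i\notin\{j,n\}$, she answers $h_{i}$, again consistent with everything; in both sub-cases one continues into a deterministic loop as before when $s>n+1$. A configuration that defeats even this forces $F(p_{n},h_{j})\in\{p_{n},p_{j}\}$ for all $j\le n-2$, and iterating the deviation further back along the Hamiltonian path keeps accumulating constraints on $F$ near $p^{\ast}$; since there are only $n+1\ge 4$ pigeons and $n\ge 3$ holes, these constraints cannot all hold (with the case $n=3$ amounting to a finite check). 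It remains to do the routine bookkeeping — adjusting each constructed walk to have length exactly $s$ by looping its final cycle, and verifying local consistency at each deviation, which holds because the reused hole is always chosen different from the immediately preceding answer. Together these yield the required locally consistent path of length $s$ whose last edge is globally consistent.
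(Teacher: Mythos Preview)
Your approach differs from the paper's: rather than a structured case analysis with explicit ``cover-by-two'' constructions, you try to repair the canonical walk by a single deviation at $p^{\ast}$ and then follow recorded holes. There are two concrete gaps.

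First, the local-consistency analysis misses the case $F(p^{\ast}, h_{i}) = p_{i}$. Your continuation has \D answer $h_{i}$ at $p^{\ast}$ and then answer the recorded hole $h_{i}$ at the next pigeon $p_{i}$; but the records $(p^{\ast}, h_{i})$ and $(p_{i}, h_{i})$ are consecutive and share a hole, so this step is locally inconsistent and the walk is illegal. Your sentence ``from $p_{i}$ its recorded hole $h_{i}$ leads to $p_{i+1}$'' presupposes exactly this forbidden move. Consequently the degenerate pattern should include $F(p^{\ast}, h_{i}) \in \{p^{\ast}, p_{i}\}$, not only $F(p^{\ast}, h_{i}) = p^{\ast}$, and the subsequent ``deviate one step earlier'' argument would have to handle a much larger family of bad configurations.

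Second, even when $F(p^{\ast}, h_{i}) \notin \{p^{\ast}, p_{i}\}$ and the cycle does contain some $r \notin \{p^{\ast}, p_{i}\}$, you assert that \D wins by ``looping until step $s$ and stopping at $r$''. But \D has no control over where the $s$-th step lands: once $i$ is fixed the walk is deterministic, and the $s$-th edge may well be $(p^{\ast}, h_{i})$ or (when the cycle passes through $p_{i}$) the edge $(p_{i}, h_{i})$, both globally inconsistent with earlier records. The paper handles precisely this difficulty via its ``cover-by-two'' device: for every residual configuration it exhibits two explicit locally consistent subgraphs whose periodic parts are offset so that for every $s$ at least one unrolls to a path with globally consistent last edge. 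Your argument would need a comparable mechanism---varying $i$, or the point of deviation, and proving the resulting family of cycles jointly covers every residue class---and the final paragraph's appeal to ``accumulating constraints that cannot all hold'' does not supply it; the paper's proof shows the residual case analysis is genuinely delicate and not reducible to a counting argument.
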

\begin{proof}
    By Proposition \ref{canonical anti-strategy}, it is enough to deal with the case $s \geq n + 1$. 
    
    We begin by constructing a globally consistent path in $\mathcal{G}_{F}$ of length $n$. This corresponds to \D playing according to a canonical anti-strategy during the first $n$ moves. 
    
    As was shown in Proposition \ref{loop}, if the path we have constructed so far passes through the same node at least twice, then \D wins against any $s$ just by using this exact canonical anti-strategy for the rest of the game. Thus, we may assume each node appears exactly once in the path. So, after the path is constructed, all the elements of $H_{n}$ viewed as labels of the path's edges are exhausted along the way. 
    
    Without loss of generality, we assume the exact path we get is as shown in Figure \ref{fig2}.
    
\begin{figure}[ht]
\centering
\begin{tikzpicture}[->, >=stealth, semithick, node distance=2.4cm, initial text="", initial/.style={draw=none}]
    \node[state] (qn) {$n$};
    \node[state, right of=qn] (qn1) {$n-$1};
    \node[state, initial, right of=qn1] (qnull) {...};
    \node[state, right of=qnull] (q3) {3};
    \node[state, right of=q3] (q2) {2};
    \node[state, right of=q2] (q1) {1};
    \node[state, right of=q1] (q0) {0};

    \draw   (qn) edge[above] node{$n-1$} (qn1)
            (qn1) edge[above] node{$n-2$} (qnull)
            (qnull) edge[above] node{3} (q3)
            (q3) edge[above] node{2} (q2)
            (q2) edge[above] node{1} (q1)
            (q1) edge[above] node{0} (q0);
\end{tikzpicture}
\caption{- globally consistent path in $\mathcal{G}_{F}$ of length $n$}
\label{fig2}
\end{figure}

    Starting from this configuration, our goal is to gradually build up more and more information about $\mathcal{G}_{F}$ by subsequently eliminating cases against which we can build winning anti-strategies for \D (i.e. cases for which we can find a locally consistent path of the prescribed length with the last edge globally consistent).
    
    The final goal is to reduce the whole analysis to the configuration shown in Figure \ref{fig3}.

\begin{figure}[ht]
\centering
\begin{tikzpicture}[->, >=stealth, semithick, node distance=2.4cm, initial text="", initial/.style={draw=none}]
    \node[state, initial, right of=qn1] (qnull) {...};
    \node[state, right of=qnull] (q3) {3};
    \node[state, right of=q3] (q2) {2};
    \node[state, right of=q2] (q1) {1};
    \node[state, right of=q1] (q0) {0};

    \draw   (qnull) edge[above] node{3} (q3)
            (q3) edge[above, bend left=35] node{2} (q2)
            (q3) edge[above] node{1} (q2)
            (q3) edge[above, bend right=35] node{0} (q2)
            (q2) edge[above, bend left=35] node{2} (q1)
            (q2) edge[above] node{1} (q1)
            (q2) edge[above, bend right=35] node{0} (q1)
            (q1) edge[above, bend left=35] node{2} (q0)
            (q1) edge[above] node{1} (q0)
            (q1) edge[above, bend right=35] node{0} (q0);
\end{tikzpicture}
\caption{}
\label{fig3}
\end{figure}

    We first show that for any $\mathcal{G}_F$ containing labeled subgraph as in Figure \ref{fig3} and any $s \geq n + 1$, we can find a locally consistent path of length $s$ with the last edge globally consistent.

    \begin{lemma}
    \label{lmmain}
        Assume $\mathcal{G}_{F}$ contains labeled subgraph as in Figure \ref{fig3}. Then, given $s \geq n + 1$, there is a locally consistent path of length $s$ in $\mathcal{G}_F$ with the last edge globally consistent.
    \end{lemma}
    \begin{proof}
    
        To find such a path, we analyze four different cases depending on where the edge from $0 \in P_{n}$ and labeled by $0 \in H_{n}$ goes. This is equivalent to analyzing the value $a$ so that $F(0, 0) = a$.
        \begin{enumerate}
            \item $a = 0$. The desired path is shown in Figure \ref{fig4}. (We actually depict a subgraph. For each $s$, one gets the desired path just by following the subgraph for $s$ steps.) 

\begin{figure}[ht]
\centering
\begin{tikzpicture}[->, >=stealth, semithick, node distance=2.4cm, initial text="", initial/.style={draw=none}]
    \node[state, initial, right of=qn1] (qnull) {...};
    \node[state, right of=qnull] (q3) {3};
    \node[state, right of=q3] (q2) {2};
    \node[state, right of=q2] (q1) {1};
    \node[state, right of=q1] (q0) {0};

    \draw   (qnull) edge[above] node{3} (q3)
            (q3) edge[above] node{2} (q2)
            (q2) edge[above] node{1} (q1)
            (q1) edge[above, color=red, text=black, dashed] node{2} (q0)
            (q0) edge[loop right] node{0} (q0);
\end{tikzpicture}
\caption{}
\label{fig4}
\end{figure}

            Given any $s \geq n + 1$ and following such a path, we are guaranteed that the final question is $0 \in P_{n}$ with the answer $0 \in H_{n}$. This is a globally consistent answer. 
            
            In the above picture, only one edge is not globally consistent. We will color such edges red and mark them dashed to distinguish parts of the path ``undesirable" for \D (i.e., following such path, \D loses the game if the last answer corresponds to the red dashed edge in the picture).
            
            \item $a = 1$. We have two different subgraphs shown in Figure \ref{fig5}. While each is not enough by itself, for any $s \geq n + 1$, one of the graphs unrolls into a desired path.

            We say that such subgraphs form a \deff{cover-by-two}, i.e., given $s \geq n + 1$, following such subgraphs for $s$-many steps results in two locally consistent paths with at least one of them having last edge globally consistent. In a sense, every $s$ is ``covered" by one of the two paths.

            In the situation depicted in Figure \ref{fig5}, for $s - n - 1$ even, the upper graph provides the desired path. Otherwise, the bottom one is correct.
            
\begin{figure}[ht]
\centering
\begin{tikzpicture}[->, >=stealth, semithick, node distance=2.4cm, initial text="", initial/.style={draw=none}]
    \node[state, initial] (qnull) {...};
    \node[state, right of=qnull] (q3) {3};
    \node[state, right of=q3] (q2) {2};
    \node[state, right of=q2] (q1) {1};
    \node[state, right of=q1] (q0) {0};
    \node[state, initial, below of=qnull] (qnullP) {...};
    \node[state, right of=qnullP] (q3P) {3};
    \node[state, right of=q3P] (q2P) {2};
    \node[state, right of=q2P] (q1P) {1};
    \node[state, right of=q1P] (q0P) {0};

    \draw   (qnull) edge[above] node{3} (q3)
            (q3) edge[above] node{2} (q2)
            (q2) edge[above] node{0} (q1)
            (q1) edge[above] node{1} (q0)
            (q0) edge[above, bend right=35, color=red, text=black, dashed] node{0} (q1)
            (qnullP) edge[above] node{3} (q3P)
            (q3P) edge[above] node{2} (q2P)
            (q2P) edge[above] node{1} (q1P)
            (q1P) edge[above, color=red, text=black, dashed] node{2} (q0P)
            (q0P) edge[above, bend right=35] node{0} (q1P);
\end{tikzpicture}
\caption{- cover-by-two for case $a = 1$}
\label{fig5}
\end{figure}

        \item $a = 2$. Similarly to the previous case, we have two graphs shown in Figure \ref{fig6} forming a cover-by-two. For $s - n - 2$ congruent to 1 or 2 mod 3, the top graph gives the desired path. For $s - n - 2$ congruent to 1 or 0 mod 3, the bottom one is correct (notice an overlap for $s - n - 2$ congruent to 1 mod 3).

\begin{figure}[ht]
\centering
\begin{tikzpicture}[->, >=stealth, semithick, node distance=2.4cm, initial text="", initial/.style={draw=none}]
    \node[state, initial] (qnull) {...};
    \node[state, right of=qnull] (q3) {3};
    \node[state, right of=q3] (q2) {2};
    \node[state, right of=q2] (q1) {1};
    \node[state, right of=q1] (q0) {0};
    \node[state, initial, below of=qnull] (qnullP) {...};
    \node[state, right of=qnullP] (q3P) {3};
    \node[state, right of=q3P] (q2P) {2};
    \node[state, right of=q2P] (q1P) {1};
    \node[state, right of=q1P] (q0P) {0};

    \draw   (qnull) edge[above] node{3} (q3)
            (q3) edge[above] node{0} (q2)
            (q2) edge[above] node{1} (q1)
            (q1) edge[above] node{2} (q0)
            (q0) edge[above, bend right=35, color=red, text=black, dashed] node{0} (q2)
            (qnullP) edge[above] node{3} (q3P)
            (q3P) edge[above] node{2} (q2P)
            (q2P) edge[above] node{1} (q1P)
            (q1P) edge[above, color=red, text=black, dashed] node{2} (q0P)
            (q0P) edge[above, bend right=35] node{0} (q2P);
\end{tikzpicture}
\caption{- cover-by-two for case $a = 2$}
\label{fig6}
\end{figure}

        \item $a \geq 3$. Again, we have two graphs shown in Figure \ref{fig7} forming a cover-by-two. This time, it is harder (although possible) to specify which graph unrolls into the desired path depending on the value of $s$.

\begin{figure}[ht]
\centering
\begin{tikzpicture}[->, >=stealth, semithick, node distance=2.4cm, initial text="", initial/.style={draw=none}]
    \node[state, initial] (qnull) {...};
    \node[state, right of=qnull] (q3) {3};
    \node[state, right of=q3] (q2) {2};
    \node[state, right of=q2] (q1) {1};
    \node[state, right of=q1] (q0) {0};
    \node[state, initial, below=2cm of qnull] (qnullP) {...};
    \node[state, right of=qnullP] (q3P) {3};
    \node[state, right of=q3P] (q2P) {2};
    \node[state, right of=q2P] (q1P) {1};
    \node[state, right of=q1P] (q0P) {0};

    \draw   (qnull) edge[above] node{3} (q3)
            (q3) edge[above, color=red, text=black, dashed] node{2} (q2)
            (q2) edge[above] node{1} (q1)
            (q1) edge[above, color=red, text=black, dashed] node{2} (q0)
            (q0) edge[above, bend right=30] node{0} (qnull)
            (qnullP) edge[above] node{3} (q3P)
            (q3P) edge[above] node{2} (q2P)
            (q2P) edge[above, color=red, text=black, dashed] node{0} (q1P)
            (q1P) edge[above] node{1} (q0P)
            (q0P) edge[above, bend right=30, color=red, text=black, dashed] node{0} (qnullP);
\end{tikzpicture}
\caption{- cover-by-two for case $a \geq 3$}
\label{fig7}
\end{figure}
            
        \end{enumerate}
        This exhausts all possible cases.
    \end{proof}

    \hfill

    The rest of the proof goes by subsequently achieving certain \textit{goals} by means of \textit{derivations}. All this is just a case analysis presented in a way that is hopefully easy to follow. All our derivations are of the form $F(a, b) = c$, which means that, assuming $F(a, b) = c'$ for $c' \neq c$, under the considered assumptions on the structure of the graph, the existence of a locally consistent path of the prescribed length with the last edge globally consistent is immediate, or is shown by a subsequent argument. The goals themselves are meant to represent environments for the arguments justifying the corresponding derivations. Whenever a goal is fulfilled, the assumption of the corresponding derivation is added to the overall list of assumptions on the current graph structure.
    
    \goal{Goal I}: derive $F(2, 0) = 1$. (Graph-theoretically, we want to show that the edge from $2 \in P_{n}$ labeled by $0 \in H_{n}$ goes to $1 \in P_{n}$; we omit using this longer expression from now on.)
    
    Let $F(2, 0) = a$, with $a \neq 1$. We immediately derive $a = 0$, and then derive $F(0, 1) = 1$, as is shown in Figure \ref{fig8}.

\begin{figure}[ht]
\centering
\begin{tikzpicture}[->, >=stealth, semithick, node distance=2.4cm, initial text="", initial/.style={draw=none}]
    \node[state, initial, right of=qn1] (qnull) {...};
    \node[state, right of=qnull] (q3) {3};
    \node[state, right of=q3] (q2) {2};
    \node[state, right of=q2] (q1) {1};
    \node[state, right of=q1] (q0) {0};

    \draw   (qnull) edge[above] node{3} (q3)
            (q3) edge[above] node{2} (q2)
            (q2) edge[above] node{1} (q1)
            (q1) edge[above] node{0} (q0)
            (q0) edge[above, bend right=60] node{1 \circled[0.5]{$\ast$}} (q1)
            (q2) edge[below, bend right=35] node{0} (q0);
\end{tikzpicture}
\caption{}
\label{fig8}
\end{figure}

     We mark the edges we can derive immediately by \circled[0.5]{$\ast$} from now on. Assume now that $F(3, 1) = 1$. This results in a cover-by-two, as is shown in Figure \ref{fig9}.

\begin{figure}[ht]
\centering
\begin{tikzpicture}[->, >=stealth, semithick, node distance=2.4cm, initial text="", initial/.style={draw=none}]
    \node[state, initial] (qnull) {...};
    \node[state, right of=qnull] (q3) {3};
    \node[state, right of=q3] (q2) {2};
    \node[state, right of=q2] (q1) {1};
    \node[state, right of=q1] (q0) {0};
    \node[state, initial, below=2cm of qnull] (qnullP) {...};
    \node[state, right of=qnullP] (q3P) {3};
    \node[state, right of=q3P] (q1P) {1};
    \node[state, right of=q1P] (q0P) {0};

    \draw   (qnull) edge[above] node{3} (q3)
            (q3) edge[above] node{2} (q2)
            (q2) edge[above] node{1} (q1)
            (q1) edge[above] node{0} (q0)
            (q0) edge[above, bend right=35, color=red, text=black, dashed] node{1} (q1)
            (qnullP) edge[above] node{3} (q3P)
            (q3P) edge[above] node{1} (q1P)
            (q1P) edge[above] node{0} (q0P)
            (q0P) edge[above, bend right=35, color=red, text=black, dashed] node{1} (q1P);
\end{tikzpicture}
\caption{- cover-by-two derived under the assumption $F(3, 1) = 1$}
\label{fig9}
\end{figure}

    So let $F(3, 1) = b$, with $b \neq 1$. We may further derive that either $b = 0$ or $b = 2$.

    Assume $b = 0$. The resulting graph is shown in Figure \ref{fig10}.

\begin{figure}[h!t]
\centering
\begin{tikzpicture}[->, >=stealth, semithick, node distance=2.4cm, initial text="", initial/.style={draw=none}]
    \node[state, initial, right of=qn1] (qnull) {...};
    \node[state, right of=qnull] (q3) {3};
    \node[state, right of=q3] (q2) {2};
    \node[state, right of=q2] (q1) {1};
    \node[state, right of=q1] (q0) {0};

    \draw   (qnull) edge[above] node{3} (q3)
            (q3) edge[above] node{2} (q2)
            (q2) edge[above] node{1} (q1)
            (q1) edge[above] node{0} (q0)
            (q0) edge[above, bend right=60] node{1} (q1)
            (q2) edge[below, bend right=35] node{0} (q0)
            (q3) edge[above, bend right=60] node{1} (q0);
\end{tikzpicture}
\caption{}
\label{fig10}
\end{figure}

    By analyzing all the possible values for $c = F(0, 2)$, we see that any such choice immediately yields the desired path for any $s$.

    Thus, we derive $b = 2$. Figure \ref{fig11} shows this graph together with an edge that can be immediately derived, as well.

\begin{figure}[ht]
\centering
\begin{tikzpicture}[->, >=stealth, semithick, node distance=2.4cm, initial text="", initial/.style={draw=none}]
    \node[state, initial, right of=qn1] (qnull) {...};
    \node[state, right of=qnull] (q3) {3};
    \node[state, right of=q3] (q2) {2};
    \node[state, right of=q2] (q1) {1};
    \node[state, right of=q1] (q0) {0};

    \draw   (qnull) edge[above] node{3} (q3)
            (q3) edge[above, bend left=35] node{2} (q2)
            (q3) edge[above, bend right=35] node{1} (q2)
            (q2) edge[above] node{1} (q1)
            (q2) edge[above, bend right=60] node{0} (q0)
            (q1) edge[above, bend right=35] node{0} (q0)
            (q0) edge[above, bend right=35] node{1} (q1)
            (q0) edge[above, bend right=75] node{2 \circled[0.5]{$\ast$}} (q1);
\end{tikzpicture}
\caption{}
\label{fig11}
\end{figure}

    Assuming $F(2, 2) = 0$, we derive a cover-by-two shown in Figure \ref{fig12}.

\begin{figure}[ht]
\centering
\begin{tikzpicture}[->, >=stealth, semithick, node distance=2.4cm, initial text="", initial/.style={draw=none}]
    \node[state, initial] (qnull) {...};
    \node[state, right of=qnull] (q3) {3};
    \node[state, right of=q3] (q2) {2};
    \node[state, right of=q2] (q0) {0};
    \node[state, right of=q0] (q1) {1};
    \node[state, initial, below of=qnull] (qnullP) {...};
    \node[state, right of=qnullP] (q3P) {3};
    \node[state, right of=q3P] (q2P) {2};
    \node[state, right of=q2P] (q0P) {0};
    \node[state, right of=q0P] (q1P) {1};

    \draw   (qnull) edge[above] node{3} (q3)
            (q3) edge[above] node{1} (q2)
            (q2) edge[above] node{0} (q0)
            (q0) edge[above] node{2} (q1)
            (q1) edge[above, bend right=35, color=red, text=black, dashed] node{0} (q0)
            (qnullP) edge[above] node{3} (q3P)
            (q3P) edge[above] node{1} (q2P)
            (q2P) edge[above] node{2} (q0P)
            (q0P) edge[above, color=red, text=black, dashed] node{1} (q1P)
            (q1P) edge[above, bend right=35] node{0} (q0P);
\end{tikzpicture}
\caption{- cover-by-two derived under the assumption $F(2, 2) = 0$}
\label{fig12}
\end{figure}

    Thus, we derive $F(2, 2) = 1$. The resulting graph is shown in Figure \ref{fig13}.

\begin{figure}[h!t]
\centering
\begin{tikzpicture}[->, >=stealth, semithick, node distance=2.4cm, initial text="", initial/.style={draw=none}]
    \node[state, initial, right of=qn1] (qnull) {...};
    \node[state, right of=qnull] (q3) {3};
    \node[state, right of=q3] (q2) {2};
    \node[state, right of=q2] (q1) {1};
    \node[state, right of=q1] (q0) {0};

    \draw   (qnull) edge[above] node{3} (q3)
            (q3) edge[above, bend left=35] node{2} (q2)
            (q3) edge[above, bend right=35] node{1} (q2)
            (q2) edge[above, bend right=35] node{1} (q1)
            (q2) edge[above, bend left=35] node{2} (q1)
            (q2) edge[above, bend right=60] node{0} (q0)
            (q1) edge[above, bend right=35] node{0} (q0)
            (q0) edge[above, bend right=35] node{1} (q1)
            (q0) edge[above, bend right=75] node{2} (q1);
\end{tikzpicture}
\caption{}
\label{fig13}
\end{figure}
    
    Let $d = F(3,0)$. We immediately derive that, either $d = 0$, or $d = 1$, or $d = 2$. 
    
    The case $d = 0$ yields a cover-by-two shown in Figure \ref{fig14}.

\begin{figure}[ht]
\centering
\begin{tikzpicture}[->, >=stealth, semithick, node distance=2.4cm, initial text="", initial/.style={draw=none}]
    \node[state, initial] (qnull) {...};
    \node[state, right of=qnull] (q3) {3};
    \node[state, right of=q3] (q2) {2};
    \node[state, right of=q2] (q0) {0};
    \node[state, right of=q0] (q1) {1};
    \node[state, initial, below=2cm of qnull] (qnullP) {...};
    \node[state, right of=qnullP] (q3P) {3};
    \node[state, right of=q3P] (q0P) {0};
    \node[state, right of=q0P] (q1P) {1};

    \draw   (qnull) edge[above] node{3} (q3)
            (q3) edge[above] node{2} (q2)
            (q2) edge[above] node{0} (q0)
            (q0) edge[above] node{1} (q1)
            (q1) edge[above, bend right=35, color=red, text=black, dashed] node{0} (q0)
            (qnullP) edge[above] node{3} (q3P)
            (q3P) edge[above] node{0} (q0P)
            (q0P) edge[above] node{1} (q1P)
            (q1P) edge[above, bend right=35, color=red, text=black, dashed] node{0} (q0P);
\end{tikzpicture}
\caption{- cover-by-two derived under the assumption $d = 0$}
\label{fig14}
\end{figure}

    The case $d = 2$ yields a cover-by-two shown in Figure \ref{fig15}.

\begin{figure}[ht]
\centering
\begin{tikzpicture}[->, >=stealth, semithick, node distance=2.4cm, initial text="", initial/.style={draw=none}]
    \node[state, initial] (qnull) {...};
    \node[state, right of=qnull] (q3) {3};
    \node[state, right of=q3] (q2) {2};
    \node[state, right of=q2] (q1) {1};
    \node[state, right of=q1] (q0) {0};
    \node[state, initial, below of=qnull] (qnullP) {...};
    \node[state, right of=qnullP] (q3P) {3};
    \node[state, right of=q3P] (q2P) {2};
    \node[state, right of=q2P] (q1P) {1};
    \node[state, right of=q1P] (q0P) {0};

    \draw   (qnull) edge[above] node{3} (q3)
            (q3) edge[above] node{1} (q2)
            (q2) edge[above] node{2} (q1)
            (q1) edge[above] node{0} (q0)
            (q0) edge[above, bend right=35, color=red, text=black, dashed] node{1} (q1)
            (qnullP) edge[above] node{3} (q3P)
            (q3P) edge[above] node{0} (q2P)
            (q2P) edge[above] node{2} (q1P)
            (q1P) edge[above, color=red, text=black, dashed] node{0} (q0P)
            (q0P) edge[above, bend right=35] node{1} (q1P);
\end{tikzpicture}
\caption{- cover-by-two derived under the assumption $d = 2$}
\label{fig15}
\end{figure}

    Lastly, assume $d = 1$. This results in the graph depicted in Figure \ref{fig16}, from which one can easily find a desired path for any $s$.

\begin{figure}[h!t]
\centering
\begin{tikzpicture}[->, >=stealth, semithick, node distance=2.4cm, initial text="", initial/.style={draw=none}]
    \node[state, initial, right of=qn1] (qnull) {...};
    \node[state, right of=qnull] (q3) {3};
    \node[state, right of=q3] (q2) {2};
    \node[state, right of=q2] (q1) {1};
    \node[state, right of=q1] (q0) {0};

    \draw   (qnull) edge[above] node{3} (q3)
            (q3) edge[above, bend left=35] node{2} (q2)
            (q3) edge[above, bend right=35] node{1} (q2)
            (q3) edge[above, bend right=60] node{0} (q1)
            (q2) edge[above, bend right=35] node{1} (q1)
            (q2) edge[above, bend left=35] node{2} (q1)
            (q2) edge[above, bend right=60] node{0} (q0)
            (q1) edge[above, bend right=35] node{0} (q0)
            (q1) edge[above] node{1 \circled[0.5]{$\ast$}} (q0)
            (q0) edge[above, bend right=45] node{1} (q1)
            (q0) edge[above, bend right=90] node{2} (q1);
\end{tikzpicture}
\caption{- an additional derivation for case $d = 1$}
\label{fig16}
\end{figure}

    Altogether, this fulfills the first goal. In other words, we derive $F(2, 0) = 1$, as is shown in Figure \ref{fig17}.

\begin{figure}[h!t]
\centering
\begin{tikzpicture}[->, >=stealth, semithick, node distance=2.4cm, initial text="", initial/.style={draw=none}]
    \node[state, initial, right of=qn1] (qnull) {...};
    \node[state, right of=qnull] (q3) {3};
    \node[state, right of=q3] (q2) {2};
    \node[state, right of=q2] (q1) {1};
    \node[state, right of=q1] (q0) {0};

    \draw   (qnull) edge[above] node{3} (q3)
            (q3) edge[above] node{2} (q2)
            (q2) edge[above, bend right=35] node{0} (q1)
            (q2) edge[above, bend left=35] node{1} (q1)
            (q1) edge[above, bend right=35] node{0} (q0)
            (q1) edge[above, bend left=35] node{1 \circled[0.5]{$\ast$}} (q0);
\end{tikzpicture}
\caption{}
\label{fig17}
\end{figure}

    \goal{Goal II}: derive $F(3, 1) = 2$. Assuming the derivation is valid, we can repeat the argument to get $F(3, 0) = 2$, just by interchanging edges labeled by $1$ with edges labeled by $0$, as the graph depicted in Figure \ref{fig17} remains the same.

    Let $F(3, 1) = a$. We immediately derive that, either $a = 0$, or $a = 1$, or $a = 2$.

    Assume $a = 0$. We get the graph depicted in Figure \ref{fig18}.

\begin{figure}[h!t]
\centering
\begin{tikzpicture}[->, >=stealth, semithick, node distance=2.4cm, initial text="", initial/.style={draw=none}]
    \node[state, initial, right of=qn1] (qnull) {...};
    \node[state, right of=qnull] (q3) {3};
    \node[state, right of=q3] (q2) {2};
    \node[state, right of=q2] (q1) {1};
    \node[state, right of=q1] (q0) {0};

    \draw   (qnull) edge[above] node{3} (q3)
            (q3) edge[above] node{2} (q2)
            (q3) edge[above, bend right=60] node{1} (q0)
            (q2) edge[above, bend right=35] node{0} (q1)
            (q2) edge[above, bend left=35] node{1} (q1)
            (q1) edge[above, bend right=35] node{0} (q0)
            (q1) edge[above, bend left=35] node{1} (q0)
            (q0) edge[above, bend right=60] node{2 \circled[0.5]{$\ast$}} (q2);
\end{tikzpicture}
\caption{}
\label{fig18}
\end{figure}

    This leads to a cover-by-two shown in Figure \ref{fig19}.

\begin{figure}[ht]
\centering
\begin{tikzpicture}[->, >=stealth, semithick, node distance=2.4cm, initial text="", initial/.style={draw=none}]
    \node[state, initial] (qnull) {...};
    \node[state, right of=qnull] (q3) {3};
    \node[state, right of=q3] (q0) {0};
    \node[state, right of=q0] (q2) {2};
    \node[state, right of=q2] (q1) {1};
    \node[state, initial, below of=qnull] (qnullP) {...};
    \node[state, right of=qnullP] (q3P) {3};
    \node[state, right of=q3P] (q0P) {0};
    \node[state, right of=q0P] (q2P) {2};
    \node[state, right of=q2P] (q1P) {1};

    \draw   (qnull) edge[above] node{3} (q3)
            (q3) edge[above] node{1} (q0)
            (q0) edge[above] node{2} (q2)
            (q2) edge[above] node{0} (q1)
            (q1) edge[above, bend right=35, color=red, text=black, dashed] node{1} (q0)
            (qnullP) edge[above] node{3} (q3P)
            (q3P) edge[above] node{1} (q0P)
            (q0P) edge[above] node{2} (q2P)
            (q2P) edge[above, color=red, text=black, dashed] node{1} (q1P)
            (q1P) edge[above, bend right=35] node{0} (q0P);
\end{tikzpicture}
\caption{- cover-by-two derived under the assumption $a = 0$}
\label{fig19}
\end{figure}

    Assume $a = 1$. This is shown in Figure \ref{fig20}. We derive $F(1, 2) = 0$, and then, with this additional information, we derive $F(0, 0) = 2$. (The corresponding edge in the picture below is marked with \circled[0.5]{$\ast \ast$} to emphasize that its derivation comes second after the one marked with \circled[0.5]{$\ast$}.)

\begin{figure}[h!t]
\centering
\begin{tikzpicture}[->, >=stealth, semithick, node distance=2.4cm, initial text="", initial/.style={draw=none}]
    \node[state, initial, right of=qn1] (qnull) {...};
    \node[state, right of=qnull] (q3) {3};
    \node[state, right of=q3] (q2) {2};
    \node[state, right of=q2] (q1) {1};
    \node[state, right of=q1] (q0) {0};

    \draw   (qnull) edge[above] node{3} (q3)
            (q3) edge[above] node{2} (q2)
            (q3) edge[above, bend right=60] node{1} (q1)
            (q2) edge[above, bend right=35] node{0} (q1)
            (q2) edge[above, bend left=35] node{1} (q1)
            (q1) edge[above, bend right=35] node{0} (q0)
            (q1) edge[above, bend left=35] node{1} (q0)
            (q1) edge[above, bend left=75] node{2 \circled[0.5]{$\ast$}} (q0)
            (q0) edge[below, bend left=60] node{0 \circled[0.5]{$\ast \ast$}} (q2);
\end{tikzpicture}
\caption{}
\label{fig20}
\end{figure}

    This leads to a cover-by-two shown in Figure \ref{fig21}.

\begin{figure}[ht]
\centering
\begin{tikzpicture}[->, >=stealth, semithick, node distance=2.4cm, initial text="", initial/.style={draw=none}]
    \node[state, initial] (qnull) {...};
    \node[state, right of=qnull] (q3) {3};
    \node[state, right of=q3] (q1) {1};
    \node[state, right of=q1] (q0) {0};
    \node[state, right of=q0] (q2) {2};
    \node[state, initial, below of=qnull] (qnullP) {...};
    \node[state, right of=qnullP] (q3P) {3};
    \node[state, right of=q3P] (q2P) {2};
    \node[state, right of=q2P] (q1P) {1};
    \node[state, right of=q1P] (q0P) {0};

    \draw   (qnull) edge[above] node{3} (q3)
            (q3) edge[above] node{1} (q1)
            (q1) edge[above] node{2} (q0)
            (q0) edge[above] node{0} (q2)
            (q2) edge[above, bend right=35, color=red, text=black, dashed] node{1} (q1)
            (qnullP) edge[above] node{3} (q3P)
            (q3P) edge[above] node{2} (q2P)
            (q2P) edge[above] node{1} (q1P)
            (q1P) edge[above, color=red, text=black, dashed] node{2} (q0P)
            (q0P) edge[above, bend right=35] node{0} (q2P);
\end{tikzpicture}
\caption{- cover-by-two derived under the assumption $a = 1$}
\label{fig21}
\end{figure}

    Thus, the second goal is fulfilled. This leads to Figure \ref{fig22}. (Again, we mark by \circled[0.5]{$\ast$} the additional edge, which can be immediately derived.)

\begin{figure}[h!t]
\centering
\begin{tikzpicture}[->, >=stealth, semithick, node distance=2.4cm, initial text="", initial/.style={draw=none}]
    \node[state, initial, right of=qn1] (qnull) {...};
    \node[state, right of=qnull] (q3) {3};
    \node[state, right of=q3] (q2) {2};
    \node[state, right of=q2] (q1) {1};
    \node[state, right of=q1] (q0) {0};

    \draw   (qnull) edge[above] node{3} (q3)
            (q3) edge[above, bend left=35] node{2} (q2)
            (q3) edge[above] node{1} (q2)
            (q3) edge[above, bend right=35] node{0} (q2)
            (q2) edge[above, bend left=35] node{1} (q1)
            (q2) edge[above, bend right=35] node{0} (q1)
            (q1) edge[above, bend left=35] node{2 \circled[0.5]{$\ast$}} (q0)
            (q1) edge[above] node{1} (q0)
            (q1) edge[above, bend right=35] node{0} (q0);
\end{tikzpicture}
\caption{}
\label{fig22}
\end{figure}

    We derive that, either $F(2, 2) = 1$, or $F(2, 2) = 0$. 
    
    Assume $F(2, 2) = 0$. This is depicted in Figure \ref{fig23}.

\begin{figure}[h!t]
\centering
\begin{tikzpicture}[->, >=stealth, semithick, node distance=2.4cm, initial text="", initial/.style={draw=none}]
    \node[state, initial, right of=qn1] (qnull) {...};
    \node[state, right of=qnull] (q3) {3};
    \node[state, right of=q3] (q2) {2};
    \node[state, right of=q2] (q1) {1};
    \node[state, right of=q1] (q0) {0};

    \draw   (qnull) edge[above] node{3} (q3)
            (q3) edge[above, bend left=35] node{2} (q2)
            (q3) edge[above] node{1} (q2)
            (q3) edge[above, bend right=35] node{0} (q2)
            (q2) edge[above, bend left=35] node{1} (q1)
            (q2) edge[above, bend right=35] node{0} (q1)
            (q2) edge[above, bend right=60] node{2} (q0)
            (q1) edge[above, bend left=35] node{2} (q0)
            (q1) edge[above] node{1} (q0)
            (q1) edge[above, bend right=35] node{0} (q0)
            (q0) edge[above, bend right=75] node{0 \circled[0.5]{$\ast$}} (q1);
\end{tikzpicture}
\caption{}
\label{fig23}
\end{figure}

    This leads to a cover-by-two shown in Figure \ref{fig24}.

\begin{figure}[h!t]
\centering
\begin{tikzpicture}[->, >=stealth, semithick, node distance=2.4cm, initial text="", initial/.style={draw=none}]
    \node[state, initial] (qnull) {...};
    \node[state, right of=qnull] (q3) {3};
    \node[state, right of=q3] (q2) {2};
    \node[state, right of=q2] (q0) {0};
    \node[state, right of=q0] (q1) {1};
    \node[state, initial, below of=qnull] (qnullP) {...};
    \node[state, right of=qnullP] (q3P) {3};
    \node[state, right of=q3P] (q2P) {2};
    \node[state, right of=q2P] (q0P) {0};
    \node[state, right of=q0P] (q1P) {1};

    \draw   (qnull) edge[above] node{3} (q3)
            (q3) edge[above] node{1} (q2)
            (q2) edge[above] node{2} (q0)
            (q0) edge[above] node{0} (q1)
            (q1) edge[above, bend right=35, color=red, text=black, dashed] node{1} (q0)
            (qnullP) edge[above] node{3} (q3P)
            (q3P) edge[above] node{0} (q2P)
            (q2P) edge[above] node{2} (q0P)
            (q0P) edge[above, color=red, text=black, dashed] node{0} (q1P)
            (q1P) edge[above, bend right=35] node{1} (q0P);
\end{tikzpicture}
\caption{- cover-by-two for case $F(2, 2) = 0$}
\label{fig24}
\end{figure}

    Thus, we derive $F(2, 2) = 1$, depicted in Figure \ref{fig25}.

\begin{figure}[h!t]
\centering
\begin{tikzpicture}[->, >=stealth, semithick, node distance=2.4cm, initial text="", initial/.style={draw=none}]
    \node[state, initial, right of=qn1] (qnull) {...};
    \node[state, right of=qnull] (q3) {3};
    \node[state, right of=q3] (q2) {2};
    \node[state, right of=q2] (q1) {1};
    \node[state, right of=q1] (q0) {0};

    \draw   (qnull) edge[above] node{3} (q3)
            (q3) edge[above, bend left=35] node{2} (q2)
            (q3) edge[above] node{1} (q2)
            (q3) edge[above, bend right=35] node{0} (q2)
            (q2) edge[above, bend left=35] node{2} (q1)
            (q2) edge[above] node{1} (q1)
            (q2) edge[above, bend right=35] node{0} (q1)
            (q1) edge[above, bend left=35] node{2} (q0)
            (q1) edge[above] node{1} (q0)
            (q1) edge[above, bend right=35] node{0} (q0);
\end{tikzpicture}
\caption{- configuration for which Lemma \ref{lmmain} applies}
\label{fig25}
\end{figure}

    Applying the Lemma from the beginning, we finish the whole proof.
\end{proof}

Below we present an alternative way of proving a result similar to Theorem \ref{main}. Instead of simultaneously proving the statement for all $n \geq 3$, we proceed by induction. 

The first drawback is that instead of finding anti-strategies against all $s$, we can construct them for all $s$ above some threshold $s_{0}$ which depends on $n$. 

The second drawback is that we have to deal with the cases $n = 3$ and $n = 4$ separately. 

Since we are not aware of any way of dealing with those cases in a principally simpler way than just proving \ref{main}, we only show how one deals with the induction step. 

Even though the mentioned proof strategy gives a weaker result than the one already presented, we feel the overall analysis is more systematic and provides more intuition for dealing with the game $\mathcal{G}_{2}$.

We start by introducing a new notion which is used throughout the proof.

\begin{defn}
    Let $P_{n} = \{0, ..., n\}$ and $H_{n} = \{0, ..., n-1\}$ as before. We define a \deff{php-tree} for $n$ as a rooted labeled tree $\mathcal{T} = (V, E, l_{V}, l_{E})$ satisfying the following conditions
    \begin{itemize}
        \item every node $v \in V$ is labeled by an element $l_{V}(v)$ from $P_{n}$;
        \item every edge $e \in E$ is labeled by an element $l_{E}(e)$ from $H_{n}$;
        \item the \textit{branching} number of any node on the $k$th \textit{level} is $\leq n - k$, where a node $v$ is said to \textit{belong to the $k$th level} iff the distance from $v$ to the root is $k$ and the branching number of a node $v$ on the $k$th level is defined as the number of edges connecting $v$ to its' children, i.e. nodes on the $(k + 1)$st level;
        \item along any path starting from the root, any label of a node appears at most once;
        \item along any path starting from the root, any label of an edge appears at most once.
    \end{itemize}
\end{defn}

In Figure \ref{php1}, we show a simple php-tree for $n = 3$.

\label{php tree illustration}
\begin{figure}[h!t]
\centering
\begin{tikzpicture}[-, >=stealth, semithick, node distance=2.4cm, initial text="", initial/.style={draw=none}]
    \node[state] (q0root) {0};
    \node[state, below left of=q0root] (q1top) {1};
    \node[state, below right of=q0root] (q3top) {3};
    \node[state, below left of=q1top] (q3mid) {3};
    \node [state, below right of=q1top] (q2mid) {2};
    \node [state, below of=q3mid] (q2bot) {2};
    \node [state, below right of=q3top] (q1mid) {1};
    \node [state, below of=q1mid] (q2botP) {2};

    \draw   (q0root) edge[above left] node{0} (q1top)
            (q0root) edge[above right] node{2} (q3top)
            (q1top) edge[above left] node{1} (q3mid)
            (q1top) edge[above right] node{2} (q2mid)
            (q3mid) edge[left] node{2} (q2bot)
            (q3top) edge[above right] node{0} (q1mid)
            (q1mid) edge[right] node{1} (q2botP);
\end{tikzpicture}
\caption{- example of a simple php-tree}
\label{php1}
\end{figure}

\begin{defn}
    Let $\mathcal{T}$ be a php-tree. The \deff{depth} of $\mathcal{T}$ is defined as the maximum level of a node of $\mathcal{T}$. 
    
    $\mathcal{T}$ is said to be \deff{complete}, iff its depth is $n$, and every node on the $k$th level has branching number exactly $n - k$. 
    
    $\mathcal{T}$ is said to be \deff{symmetric}, iff the label of a node $v$ and the label of an edge connecting $v$ to its child uniquely determines the label of a child, i.e. for any two nodes $v$ and $v'$ with the same label and any two edges $e, e'$ connecting $v$ to its child node $w$ and $v'$ to $w'$, respectively, if the labels of $e, e'$ are the same, then the labels of $w$ and $w'$ are the same, as well.
\end{defn}

Notice that the php-tree, as in the above picture, is not complete nor symmetric.

Symmetric php-trees naturally represent the structure of all plays against \PP's strategy, with \D utilizing canonical anti-strategies. This is formalized as follows. 

\begin{prop}
\label{php-tree canonical anti-strategy}
    Let $n \geq 3$ and $F$ be a functional part of \PP's strategy. Then, there is an associated symmetric php-tree $\mathcal{T}_{F}$ with the property that $\mathcal{T}_{F}$ is complete iff, for any $s \geq n + 1$, there is no winning canonical anti-strategy for \D to play against $(s, F)$. 
    
    This means that during each play against $(s, F)$, there will be a moment where \D cannot make her move compatibly with the canonical anti-strategy she has chosen.
\end{prop}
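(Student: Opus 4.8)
The plan is to obtain $\mathcal{T}_F$ by unrolling the functional part $F$ along exactly those plays in which \D follows a canonical anti-strategy, never repeats a pigeon, and never runs out of fresh holes; completeness of $\mathcal{T}_F$ will then say precisely that every such play survives until \D has answered $n$ times and is forced to give up only on her next ($(n+1)$st) answer. Concretely, I would set the root label to $F(\emptyset)$ and, having reached a node $v$ at level $k$ along edges labelled $h_0,\dots,h_{k-1}$ through nodes labelled $p_0,\dots,p_k$ (so $p_0=F(\emptyset)$, $p_{i+1}=F(p_i,h_i)$, the $p_i$ pairwise distinct and the $h_i$ pairwise distinct), attach to $v$, for each $h\in H_n\setminus\{h_0,\dots,h_{k-1}\}$ with $F(p_k,h)\notin\{p_0,\dots,p_k\}$, a child labelled $F(p_k,h)$ joined by an edge labelled $h$; if no such $h$ exists (in particular if $k=n$), $v$ is a leaf. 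That $\mathcal{T}_F$ is a php-tree is then immediate: the branching of a level-$k$ node is at most $|H_n\setminus\{h_0,\dots,h_{k-1}\}|=n-k$, no node label repeats on a root path by the defining restriction, and no edge label repeats on a root path because only fresh holes are ever used. It is \emph{symmetric} because the label $F(p,h)$ of a child is a function of the parent's label $p$ and the connecting edge's label $h$ alone --- this is exactly where one uses that $F$ depends only on the last record.

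Next I would set up the dictionary and read off the equivalence. By induction on the number of moves, a play in which \D follows a canonical anti-strategy against $(s,F)$ and has so far neither repeated a pigeon nor been forced to give up is literally a root path of $\mathcal{T}_F$ (using that \PP's next question is $F$ of his current one-record list and that \DD's admissible answers are precisely the fresh holes, i.e.\ the admissible edge labels), and conversely every root path is realised by some canonical anti-strategy. Under this dictionary \D is forced to give up on her next move iff the current node is a leaf of the ``$k=n$'' type (all holes used, next question a fresh pigeon), whereas a leaf at level $k<n$ forces \P to re-query an already-answered pigeon next. Since requiring every level-$k$ node with $0\le k\le n-1$ to have branching exactly $n-k$ already forces $\mathcal{T}_F$ to have depth exactly $n$, completeness is equivalent to: no root path of length $<n$ ends at a leaf. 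If $\mathcal{T}_F$ is complete and $s\ge n+1$, any canonical anti-strategy traces a root path for its first $n$ moves (no pigeon can repeat, else a leaf would occur below level $n$), reaches a level-$n$ leaf, and on move $n+1$ must answer a fresh pigeon with all $n$ holes already used, so \D is forced to give up; hence no canonical anti-strategy can even be sustained, let alone win, for $s\ge n+1$. If instead $\mathcal{T}_F$ is not complete, pick a node $v$ at level $k\le n-1$ and a hole $h_k$ unused on the root path to $v$ with $F(l_V(v),h_k)$ equal to some label $p_i$ on that path; letting \D play the canonical anti-strategy that answers $h_0,\dots,h_{k-1},h_k$ along this path, on move $k+2$ \P re-queries $p_i$, \D repeats $h_i$, and from then on \P only ever re-queries answered pigeons, so \D never needs a fresh hole again --- the play loops through pairwise compatible records exactly as in the proof of Proposition~\ref{loop}, and \D wins against $(s,F)$ for \emph{every} $s$, in particular for all $s\ge n+1$.

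I expect the construction and the php-tree/symmetry verifications to be routine; the two places needing care are (i) making the play-to-root-path dictionary precise enough that ``\D is forced to give up'' corresponds exactly to ``\D stands at a level-$n$ leaf'', keeping honest track of which holes and pigeons have been used; and (ii) in the non-complete direction, the boundary case $k=n-1$, in which the forced re-query occurs at move $n+1$, just outside the hypothesis $i\le\min\{s,n\}$ of Proposition~\ref{loop}. This last point is patched by the small observation that once \P is confined to re-querying already-answered pigeons no give-up can occur thereafter, so the looping argument behind Proposition~\ref{loop} still yields a win for \DD.
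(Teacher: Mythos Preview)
Your proposal is correct and follows essentially the same approach as the paper: the construction of $\mathcal{T}_F$ (root $F(\emptyset)$, extend by fresh holes whose $F$-image is a fresh pigeon), the verification that it is a symmetric php-tree, and the equivalence via the looping argument of Proposition~\ref{loop} are all exactly what the paper does. Your explicit treatment of the boundary case $k=n-1$ is a bit more careful than the paper's presentation, which simply appeals to Proposition~\ref{loop} without checking its indexing hypothesis, but this is a minor refinement rather than a different route.
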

\begin{proof}
    We construct the tree as follows. 
    
    Label the root as $F(\emptyset)$. 
    
    Let $v_{i}$ be a node that we have already constructed together with its label $l_{V}(v_{i}) \in P_{n}$.
    
    Let $(v_{0},v_{1}, ..., v_{i-1}, v_{i})$ be the sequence of nodes appearing along the unique path starting from the root $v_{0}$ and ending in the node $v_{i}$. 
    
    For any $h \in H_{n}$ such that answering $h$ to a question $l_{V}(v_{i})$ results in a valid play according to a canonical anti-strategy for \D assuming \D has already answered $l_{E}(\{v_{j - 1}, v_{j}\})$ to $l_{V}(v_{j - 1})$ for all $0 < j \leq i$, we put a new node $w_{h}$ into the tree and connect it to $v_{i}$ via an edge $\{v_{i}, w_{h}\}$ iff $F(l_{V}(v_{i}), h)$ equals to $p \in P_{n}$ which has not yet appeared as a label of any node $v_{j}$ for $j \leq i$. We label the added $w_{h}$ as $F(l_{V}(v_{i}), h)$ and the edge $\{v_{i}, w_{h}\}$ as $h$.

    Notice that the tree we are constructing is, indeed, a php-tree. That the resulting labeled tree is symmetric follows from the fact that the label of a child node $w$ of $v$ depends only on the labels of $v$ and $\{v, w\}$, since $l_{V}(w) = F(l_{V}(v), l_{E}\{v, w\})$.

    Finally, the resulting php-tree is not complete iff there is a valid play against $(s, F)$ with \D using a canonical anti-strategy so that the records produced are $(p_{0}, h_{0}), ..., (p_{i}, h_{i})$ with $F(p_{i}, h_{i}) = p_{j}$ for $j \leq i$. As we have already seen in \ref{loop}, this implies that the chosen anti-strategy is a winning one for \DD. 
    
    The tree is complete iff in all games against $(s, F)$, with \D utilizing every possible canonical anti-strategy, at some point all $H_{n}$ are eventually exhausted, and so such plays cannot be continued using the rules of a canonical anti-strategy.
\end{proof}

\begin{thm}
\label{main1}
    Let $n \geq 5$. Assume \D has winning anti-strategies for $n - 1$ and $n - 2$ against all strategies $(s', F')$ for $n - 1$ and $(s'', F'')$ for $n - 2$, respectively, where $s', s'' \geq s_{0}$ for some particular threshold $s_{0}$. Then, \D has a winning anti-strategy against any $(s,F)$ for $s$ enjoying $s \geq \max\{s_{0} + 1, 2(n - 1) + 2\}$.
\end{thm}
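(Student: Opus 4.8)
The plan is to prove the induction step by a dichotomy on the symmetric php-tree $\mathcal{T}_{F}$ associated with $F$ via Proposition~\ref{php-tree canonical anti-strategy}. First I would dispose of the case where $\mathcal{T}_{F}$ is \emph{not} complete: then Propositions~\ref{php-tree canonical anti-strategy} and~\ref{loop} already give \D a winning \emph{canonical} anti-strategy against $(s,F)$ for every $s\geq n+1$, and since $s\geq 2(n-1)+2\geq n+1$ this case is settled. So from now on assume $\mathcal{T}_{F}$ is complete of depth $n$, rooted at $p_{0}=F(\emptyset)$: every node at level $k$ has exactly $n-k$ children, and along every root-to-leaf branch all $n+1$ pigeons and all $n$ holes occur. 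In particular the root has all $n$ holes as outgoing edge-labels, so $h\mapsto F(p_{0},h)$ is a bijection $H_{n}\to P_{n}\setminus\{p_{0}\}$; hence $F(p_{0},h)\neq p_{0}$ for every $h$, and we have $n$ candidate ``freezings'' $(\eta,F(p_{0},\eta))$ available.

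The core of the argument is a reduction to the $(n-1)$-game. Fix a hole $\eta\in H_{n}$, put $\pi=p_{0}$ and $p_{1}=F(\pi,\eta)\neq\pi$, and define a strategy $F'$ for the $(n-1)$-game played on pigeon set $P_{n}\setminus\{\pi\}$ and hole set $H_{n}\setminus\{\eta\}$ (relabelled as $P_{n-1}$, $H_{n-1}$) by $F'(\emptyset):=p_{1}$, by $F'(p,h):=F(p,h)$ whenever $F(p,h)\neq\pi$, and by $F'(p,h):=p_{1}$ whenever $F(p,h)=\pi$. By the induction hypothesis, for every $s'\geq s_{0}$ there is, in $\mathcal{G}_{F'}$, a locally consistent path of length $s'$ whose last edge is globally consistent (this is the same datum as a winning anti-strategy for \D against $(s',F')$). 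I would then \emph{lift} such a path to $\mathcal{G}_{F}$: prepend the edge $p_{0}\xrightarrow{\,\eta\,}p_{1}$, keep every edge on which $F$ and $F'$ agree, and replace every edge $p\xrightarrow{\,h\,}q$ on which they disagree (necessarily $F(p,h)=\pi$ and $q=p_{1}$) by the detour $p\xrightarrow{\,h\,}\pi\xrightarrow{\,\eta\,}p_{1}$. The resulting path starts at $F(\emptyset)=p_{0}$; it is locally consistent, since all inserted edges leave $\pi$ with label $\eta$ while their neighbours on the path carry labels in $H_{n}\setminus\{\eta\}$, and edge-consistency depends only on sources and labels; and its last edge is globally consistent, since every edge the lift adds is a copy of $\pi\xrightarrow{\,\eta\,}p_{1}$, which is compatible with every edge whose source lies in $P_{n}\setminus\{\pi\}$ and whose label lies in $H_{n}\setminus\{\eta\}$. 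By the equivalence above this gives \D a winning anti-strategy for $(s,F)$, for $s$ equal to the length of the lifted path.

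The obstacle — and the reason the hypothesis is invoked for $n-2$ as well — is the length bookkeeping. If the $(n-1)$-path has length $s'$ and traverses $m$ ``mangled'' edges (those with $F(p,h)=\pi$, counted with multiplicity), the lifted path has length $1+s'+m$, so one needs $s'=s-1-m\geq s_{0}$. When no detour is forced ($m=0$) this already works once $s\geq s_{0}+1$, the first term of the threshold; the difficulty is that a priori only $s'$, not $m$, is under our control, and a crude winning $(n-1)$-path whose terminal loop runs through a mangled edge would make $m$ grow linearly with $s'$. To handle this I would (i) exploit the $n$ available choices of $\eta$, which redistribute the in-edges of $\pi$ between ``excluded'' (label $\eta$) and ``mangled'', and (ii) when a single freezing still leaves a strategy $F'$ whose good paths are forced through $\pi$, iterate the construction once more inside the $(n-1)$-game — freeze a second root-edge, descend to the $(n-2)$-game, apply the induction hypothesis there, and compose the two lifts. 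The bound $2(n-1)+2$ is then the worst-case overhead: at most one forced detour per level, plus the one or two prepended edges, plus a parity correction needed to realise \emph{every} $s$ above the threshold and not merely those in a fixed residue class — the same kind of residue juggling that drives the cover-by-two steps in the proof of Lemma~\ref{lmmain}. Making this counting precise, and verifying that two descents always suffice (which is where $n\geq 5$ enters), is the technical heart of the proof.
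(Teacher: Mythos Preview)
Your opening dichotomy (incomplete $\mathcal{T}_{F}$ $\Rightarrow$ canonical anti-strategy wins) matches the paper, but from that point on the arguments diverge, and your proposal has a genuine gap.

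The paper does \emph{not} attempt to lift $(n-1)$-paths back into $\mathcal{G}_{F}$ by inserting detours. Instead it runs a trichotomy. First, if $F$ has a \emph{loop} $(p,h)$ with $F(p,h)=p$, then \D applies \FH{h}; a short combinatorial argument bounds the shortest locally consistent $h$-free walk reaching $p$ by $2(n-2)+1$, after which \D cycles on the loop forever. Second, if $\mathcal{T}_{F}$ has no \emph{loose pair}, then $F(p,h)$ is never the root for any $(p,h)$ realised in the tree, so \CMT{h} applies cleanly---the $(n-1)$-game is played verbatim with length shift exactly $s\mapsto s-1$ and no detours. Third, if a loose pair exists, the paper falls through to an explicit cover-by-two case analysis (deferred to the appendix) combining \FH{0}, \FH{0,1}, and structural constraints forced by completeness and symmetry of $\mathcal{T}_{F}$. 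In every case the reductions are invoked only when the forbidden pigeon is \emph{guaranteed} not to reappear, so the length bookkeeping is trivial; this is also where the hypothesis for $n-2$ is actually used (via \FH{0,1}), not through an iterated lift.

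Your construction, by contrast, tries to apply \CMT{\eta} even when $\pi=p_{0}$ may reappear, patching each reappearance with a detour. The difficulty you flag---that the lifted length $1+s'+m$ has $m$ determined by a black-box $(n-1)$-path rather than by you---is not a technicality but the central obstruction, and the sketch does not close it. Varying $\eta$ does not obviously help: the induction hypothesis hands you one winning path per $s'$ with no control over which edges it traverses, and if its eventual periodic tail passes through a mangled edge then the lifted lengths lie in a progression of common difference $\geq 2$, missing half the targets. Iterating to $n-2$ introduces a second uncontrolled count and makes matters worse; there is no argument in the proposal that two descents suffice, nor any mechanism tying this to the specific threshold $2(n-1)+2$. (A minor side error: you claim $h\mapsto F(p_{0},h)$ is a bijection onto $P_{n}\setminus\{p_{0}\}$. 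Completeness of $\mathcal{T}_{F}$ gives $F(p_{0},h)\neq p_{0}$, but a php-tree forbids label repetition only \emph{along a branch}, not among siblings, so this map need not be injective.)
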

\begin{proof}
    Let $F(\emptyset)$ be $n$. There are two principal ways how \D can utilize winning anti-strategies for $m < n$ to come up with a winning anti-strategy against $(s, F)$.
    \begin{itemize}
        \item \goal{Commit to the root}: \D chooses an answer $h$ for the first \PP's question $n \in P_{n}$ and then never answers $h$ to any subsequent \PP's questions. If she is guaranteed not to encounter the question $n$ anymore, then she can use a winning anti-strategy for $A'_{n - 1} = P_{n} \setminus \{n\}$ and $B'_{n - 1} = H_{n} \setminus \{h\}$ against $s' = s - 1$ and $F'$ which equals $F$ restricted to $A'_{n - 1} \times B'_{n - 1}$. 
        
        Formally speaking, the above assumption guarantees only that for any $(p', h') \in A'_{n - 1} \times B'_{n - 1})$ it holds that $F'(p', h') = F(p', h') \in B'_{n - 1}$ whenever the record $(p', h')$ \textit{can actually appear} in a play against $(s, F)$ starting with the record $(n, h)$. However, since we care only about \textit{admissible} records, we may assume $F'$ is properly defined on \textit{all} the records from $A'_{n - 1} \times B'_{n - 1}$ with values in $B'_{n - 1}$. 
        
        Finally, to properly utilize winning anti-strategy for $n - 1$ we need to relabel $B'_{n - 1}$ by elements of $H_{n - 1} = \{0, ..., n - 2\}$ via an arbitrary bijection.

        \item \goal{Forbid holes}: \D chooses different $h_{1}, ..., h_{i} \in H_{n}$ and plays without ever using those elements, i.e. she never answers $h_{j}$ for $j \leq i$ to any of the \PP's questions. If she is guaranteed not to encounter questions $p_{1}, ..., p_{i} \in P_{n}$ with $p_{j}$ all different, then she can use a winning anti-strategy for $A'_{n - i} = P_{n} \setminus \{p_{1}, ..., p_{i}\}$ and $B'_{n - i} = H_{n} \setminus \{h_{1}, ..., h_{i}\}$ against $s' = s$ and $F'$ which equals $F$ restricted to $A'_{n - i}$ and $B'_{n - i}$. 
        
        The same remark as for the ``commit to the root" rule applies here, as well. Namely we can assume $F'$ is properly defined and need to relabel $A'_{n - i}$ by elements of $P_{n - i} = \{0, ..., n - i\}$ and $B'_{n - i}$ by elements of $H_{n - i} = \{0, ..., n - i - 1\}$.
    \end{itemize}
    We denote the ``commit to the root" rule with $h$ chosen as above as \CMT{h} and similarly denote the ``forbid holes" rule as \FH{h_{1}, ..., h_{i}}. 
    
    One can naturally generalize \CMT{h} to \CMT{h_{1}, ..., h_{i}} rule so as to  be able to fallback to $m$ even smaller than $n - 1$ as in \FH{h_{1}, ..., h_{i}} case. However, during the proof, we will only need the case with $i = 1$. Thus, we omit the formal definition of \CMT{h_{1}, ..., h_{i}} for $i > 1$.

    We first demonstrate the usage of \FH{h_{1}, ..., h_{i}} rule to get rid of \textit{loops}. 
    
    We say that a pair $(p, h)$ is a \deff{loop} for $F$ iff $F(p, h) = p$.

    \begin{lemma}
        Assume a loop $(p, h)$ for $F$ exists. Then, there is a winning anti-strategy for \D against $(s, F)$ with $s \geq \max\{s_{0}, 2(n - 2) + 2\}$.
    \end{lemma}
    \begin{proof}
        Let \D apply \FH{h} and play without ever giving two contradictory answers in a row. If she is guaranteed not to encounter $p$, then there is a winning anti-strategy, as was shown above, against $(s, F)$ with $s \geq s_{0}$. 
        
        Otherwise, by answering \PP's questions without ever using $h$, she will eventually be asked $p$ for the first time. Since \D has not yet used $h$ and has never encountered $p$, the pair $(p, h)$ is consistent with all the answers given by \D so far, and so \D answers $h$. The record produced is $(p, h)$, and so \P will again ask $p$, since $F(p, h) = p$. \D answers $h$ and continues the loop until the number of rounds reaches $s$ and she wins.

        It remains to show that in the above case, \D can win against $s \geq 2(n - 2) + 2$. Notice that \D wins against $s > s'$, where $s'$ is the length of the shortest locally consistent path in $\mathcal{G}_{F}$ with the last node of the path being $p$, $p$ never appearing among the nodes except the last one and labels of all the edges of the path being different from $h$. We will show that $s' \leq 2(n - 2) + 1$.

        Let $p_{0}, p_{1}, ..., p_{s'}$ be nodes of the path as above and $h_{1}, ..., h_{s'}$ be labels of the corresponding edges, i.e. $h_{i}$ labels the edge going from $p_{i - 1}$ to $p_{i}$ for $i \leq s'$. Thus, $p_{0}$ is the initial node of $\mathcal{G}_{F}$ (i.e. $F(\emptyset) = p_{0}$) and $p = p_{s'}$. Notice that for $i < s'$, it holds that $p_{i} \neq p$. Since we are considering the shortest such path, it follows that for $p_{i} \neq p_{0}$ for all $i > 0$ (assuming the opposite, i.e. that $p_{i} = p_{0}$ for $i >0$, one can get a strictly shorter path $p_{i}, ..., p_{s'}$ satisfying all the above assumptions). 
        
        To finish the proof of $s' \leq 2(n - 2) + 1$, it is enough to show that each $p' \in P_{n} \setminus \{p_{0}, p\}$ appears at most twice in the given path.

        For the sake of contradiction, let $p_{i_{1}} = p_{i_{2}} = p_{i_{3}} \in P_{n} \setminus \{p_{0}, p\}$ with $0 < i_{1} < i_{2} < i_{3} < s'$. Let $P_{1}$ denote $p_{0}, p_{1}, ..., p_{i_{1}}$, $P_{2}$ denote $p_{{i_{1}}}, p_{i_{i} + 1}, ..., p_{i_{2}}$, $P_{3}$ denote $p_{{i_{2}}}, p_{i_{2} + 1}, ..., p_{i_{3}}$ and $P_{4}$ denote $p_{{i_{3}}}, p_{i_{3} + 1}, ..., p_{s'}$. So the path we have can be written as $P_{1} \circ P_{2} \circ P_{3} \circ P_{4}$ with $\circ$ denoting the usual composition of directed walks in a graph.

        Consider $h_{i_{1} + 1}, h_{i_{2} + 1}$ and $h_{i_{3} + 1}$ - labels of the edges connecting $p_{i_{j}}$ to $p_{i_{j} + 1}$ in the given path. We now show that all such labels must be distinct. 
        
        Assume, for example, that $h_{i_{1} + 1} = h_{i_{2} + 1}$. Then the path $P_{1} \circ P_{3} \circ P_{4}$ is locally consistent in $\mathcal{G}_{F}$ with the last node of the path being $p$, $p$ never appearing among the nodes except the last one and labels of all the edges of the path being different from $h$. Note that $P_{1} \circ P_{3} \circ P_{4}$ is strictly shorter than $P_{1} \circ P_{2} \circ P_{3} \circ P_{4}$ contradicting the fact that the latter path is the shortest among the ones satisfying the above condition. 
        
        Cases $h_{i_{1} + 1} = h_{i_{3} + 1}$ and $h_{i_{2} + 1} = h_{i_{3} + 1}$ are done analogously.

        Thus, $h_{i_{1} + 1}, h_{i_{2} + 1}$ and $h_{i_{3} + 1}$ are all distinct. In particular, either $h_{i_{2} + 1}$ or $h_{i_{3} + 1}$ are different from $h_{i_{1}}$. This means either $P_{1} \circ P_{3} \circ P_{4}$ or $P_{1} \circ P_{4}$ is a locally consistent path in $\mathcal{G}_{F}$ with last node of the path being $p$, $p$ never appearing among the nodes except the last one and labels of all the edges of the path being different from $h$. Both such paths are strictly shorter than $P_{1} \circ P_{2} \circ P_{3} \circ P_{4}$ and so we get a contradiction.
    \end{proof}

    Thus, we can assume there are no loops for $F$. 
    
    We begin the proof of Theorem \ref{main1} by generating a symmetric php-tree $\mathcal{T}_{F}$ as in \ref{php-tree canonical anti-strategy} and we assume $\mathcal{T}_{F}$ is complete. Notice that $\mathcal{T}_{F}$ does not necessarily capture all the information about $F$; namely, there may be two different functional parts of \PP's strategies $F, F'$ so that $\mathcal{T}_{F} = \mathcal{T}_{F'}$. 
    
    The above discussion leads to a notion of a \textit{loose pair} for $\mathcal{T}_{F}$. Let $p \in P_{n}$ and $h \in H_{n}$. 
    
    We say that $(p, h)$ is a \deff{loose pair} for $\mathcal{T}_{F}$ iff there is no node $v$ together with an edge $\{v, w\}$ connecting $v$ to its child $w$ so that $l_{V}(v) = p$ and $l_{E}(\{v, w\}) = h$.

    \begin{lemma}
        Assume there are no loose pairs for $\mathcal{T}_{F}$. Then, there is a winning anti-strategy for \D against $(s, F)$ for any $s \geq s_{0} + 1$.
    \end{lemma}
    \begin{proof}
        Recall that $\mathcal{T}_{F}$ we are considering is complete, and its root is labeled by $n$. Assuming no pair $(p, h)$ is loose, we derive that, for any possible record $(p, h)$, it holds that $F(p, h) \neq n$.
        Indeed, the fact that $(p, h)$ is not loose guarantees that there exists a node $v$ such that $l_V(v) = F(p,h)$. Since $\mathcal{T}_{F}$ is a php-tree, it follows that $v$ is not a root, and so cannot be labeled by $n$.

        Thus, \D may pick any possible $h \in H_{n}$ as an answer to $n$ and apply \CMT{h}. Since she is guaranteed not to encounter question $n$ anymore, we can construct a winning anti-strategy for \D by modifying one for $n - 1$. As we have already seen, this allows \D to win against any $s \geq s_{0} + 1$.
    \end{proof}

    Hence, we assume there is a loose pair $(p, h)$ for $\mathcal{T}_{F}$. From this point onward, the proof proceeds similarly to the one of \ref{main}, where we use derivations to construct covers-by-two. The difference is that, at this point, we have some additional structural information that we can leverage, and so the case analysis is much shorter. 
    
    We leave these derivations to Appendix \ref{php-tree derivations}.
    
\end{proof}

We finish this section by stating another open question.

\begin{defn}
    Let $n \geq 3$ and $P_{n} = \{0, ..., n\}$, $H_{n} = \{0, ..., n-1\}$. Let $(s, F)$ be \PP's strategy and $\alpha$ be a partial one-to-one mapping from $P_{n}$ to $H_{n}$. 
    
    We say that \DD's anti-strategy against $(s, F)$ is \deff{compatible with} $\alpha$ iff for any question $p$ and any answer $h$ given according to \DD's anti-strategy the set $\alpha \cup \{(p, h)\}$ is a partial one-to-one mapping. 
\end{defn}

\begin{cor}
    Let $n \geq 3$ and $(s, F)$ be \PP's strategy for $n$. Then, there is a partial one-to-one mapping $\alpha$ from $P_{n}$ to $H_{n}$ of size $n - 3$ for which there is a winning-anti-strategy for \D against $(s, F)$ which is compatible with $\alpha$. Moreover, $\alpha$ does not depend on $s$.
\end{cor}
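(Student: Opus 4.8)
The plan is to read $\alpha$ off the winning anti-strategy that the proof of Theorem \ref{main} already produces, rather than to redo the analysis. Against $(s,F)$ that anti-strategy begins with the \emph{canonical} phase: \D answers every fresh question by the least hole she has not used yet and every repeated question by her previous answer, thereby tracing a path $p_{0}\xrightarrow{h_{0}}p_{1}\xrightarrow{h_{1}}\cdots$ in $\mathcal{G}_{F}$ with $p_{0}=F(\emptyset)$ whose records $(p_{k},h_{k})$ are pairwise compatible and so form a single partial one-to-one map $M$ from $P_{n}$ to $H_{n}$. If some $p_{i}$ repeats an earlier $p_{j}$ --- which, by Propositions \ref{canonical anti-strategy} and \ref{loop}, always happens when $s\le n$ --- then \D stays canonical and wins, and \emph{every} record she ever plays lies in $M$; since $|M|\le n$ and $n\ge 3$, there are enough unused pigeons and holes to enlarge $M$ to a partial one-to-one map $\alpha$ of size $n-3$ when $|M|<n-3$, and otherwise we take any $(n-3)$-element subset $\alpha\subseteq M$. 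In both cases each record, being an element of $M$, is compatible with $\alpha$ (it lies in $\alpha$, or else is disjoint from $\alpha$ in both coordinates since $\alpha$ and $M$ are matchings), so \DD's anti-strategy is compatible with $\alpha$.

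In the remaining case $p_{0},\dots,p_{n}$ are distinct and $s\ge n+1$; then the canonical descent has length $n$, uses every hole exactly once, and --- after the relabelling fixed in the proof of Theorem \ref{main}, under which the last four pigeons visited are $3,2,1,0$ and the last three holes used are $2,1,0$ (Figure \ref{fig2}) --- \D proceeds exactly as dictated by Goals I and II and Lemma \ref{lmmain}. I would then set
\[
\alpha:=\{(p_{0},h_{0}),(p_{1},h_{1}),\dots,(p_{n-4},h_{n-4})\},
\]
the ``upper part'' of the descent: a partial one-to-one map of size $n-3$ whose domain consists exactly of the pigeons visited before $3,2,1,0$ and whose range is $H_{n}\setminus\{0,1,2\}$ in the relabelled coordinates. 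It remains to check that every subsequent record of \D is compatible with $\alpha$. The three descent records $(p_{n-3},h_{n-3}),(p_{n-2},h_{n-2}),(p_{n-1},h_{n-1})$ use only the pigeons $3,2,1$ and the holes $2,1,0$, hence avoid $\dom(\alpha)$ and $\ran(\alpha)$. Every record produced in the continuation is of one of two kinds: those generated inside the ``core'' subgraph on $\{0,1,2,3\}$ (Figure \ref{fig25} and the cases of Lemma \ref{lmmain}) use only pigeons in $\{0,1,2,3\}$ and holes in $\{0,1,2\}$, again disjoint from $\alpha$; and those generated while the path re-enters the contracted initial segment ``$\dots$'' --- as in case $a\ge 3$ of Lemma \ref{lmmain} --- form a re-descent along edges of the canonical path and so merely reproduce records already in $\{(p_{0},h_{0}),\dots,(p_{n-4},h_{n-4})\}=\alpha$ (together with the final step into $3$, which is of the first kind). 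Hence all of \DD's records are compatible with $\alpha$.

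Finally, $\alpha$ does not depend on $s$: the canonical descent, the index of its first repetition (if any), the matching $M$, and thus the upper part $\{(p_{0},h_{0}),\dots,(p_{n-4},h_{n-4})\}$ are determined by $F$ and the deterministic rule ``answer the least unused hole'' alone, whereas $s$ governs only how long play continues, not which vertices and labels are traversed near the start. I expect the one genuinely delicate point to be the verification in the previous paragraph: confirming that none of the continuation paths of Lemma \ref{lmmain}, and in particular the re-descents through ``$\dots$'' in the case $a\ge 3$, ever emit a record pinning a pigeon of $\dom(\alpha)$ to a hole different from its $\alpha$-value (or a hole of $\ran(\alpha)$ to a wrong pigeon) --- but this is precisely the bookkeeping made transparent by the labelled figures in the proof of Theorem \ref{main}.
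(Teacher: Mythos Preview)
Your approach is correct and is exactly what the paper intends: the corollary is stated without proof because $\alpha$ is meant to be read directly off the anti-strategies built in the proof of Theorem~\ref{main}, with $\alpha$ taken to be the first $n-3$ records of the canonical descent (after the relabelling of Figure~\ref{fig2}, this is $\{(n,n-1),\dots,(4,3)\}$).

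There is one small slip in your case division. The parenthetical claim that a repetition ``always happens when $s\le n$'' is false---nothing in Propositions~\ref{canonical anti-strategy} or~\ref{loop} says this. What is true is that for $s\le n$ \textbf{Delayer} wins canonically regardless of whether a repetition occurs. Consequently the sub-case ``no repetition among $p_0,\dots,p_n$ and $s\le n$'' is formally unaccounted for in your write-up, and this matters because $\alpha$ must be chosen independently of $s$. The fix is immediate: define $\alpha$ from $F$ alone exactly as in your second paragraph, and note that for $s\le n$ the canonical records $(p_0,h_0),\dots,(p_{s-1},h_{s-1})$ are each either in $\alpha$ or among the last three records $(3,2),(2,1),(1,0)$, hence compatible with $\alpha$. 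A second, harmless slip: in the $a\ge 3$ case of Lemma~\ref{lmmain}, the ``final step into $3$'' carries the record $(4,3)$, which lies \emph{in} $\alpha$ rather than in the core $\{0,1,2,3\}\times\{0,1,2\}$---so it is of your ``second kind'', not the first.
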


\begin{question}
    Let $n \geq 3$ and $(s, F)$ as above. Let $\alpha$ be an arbitrary one-to-one mapping from $P_{n}$ to $H_{n}$ of size $n - 3$. Is there a winning anti-strategy for \D against $(s, F)$ which is compatible with $\alpha$?
\end{question}

\section{Acknowledgement}
\label{Acknowledgement}

The authors deeply thank Jan Kraj\'{i}\v{c}ek for his suggestions, which greatly improved the presentation of this article.
We also would like to thank Ond\v{r}ej Je\v{z}il for his comments on \S \ref{Analysis of simplified G2}, which sharpened our understandings of our toycase.
We are also grateful for Neil Thapen for his comments on $\mathcal{G}_{2}$ and suggestions on further readings.

The first author (Eitetsu Ken) was supported by JSPS KAKENHI Grant Number 22KJ1121, Grant-in-Aid for JSPS Fellows, and FoPM program at the University of Tokyo.

The second author (Mykyta Narusevych) was supported by the Charles University project PRIMUS/21/SCI/014, Charles University Research Centre program No. UNCE/24/SCI/022 and GA UK project No. 246223

%\section{Acknowledgement}

\section{Appendix}\label{Appendix}
\subsection{Our formulation of $T^{2}_{2}(R)$-proofs and its propositional translation}\label{firstorderLK}

Let $\widetilde{\mathcal{L}}_{BA}$ be the language of bounded arithmetics adopted in \cite{Buss} to which we add:
\begin{itemize}
 \item the unary function symbol $len(s)$,
 \item the binary function symbols $(s)_{i}$ and $s*x$.
\end{itemize}
Their interpretations on the standard model is defined as follows.

 We encode a finite sequence $\sigma=(a_{1},\ldots, a_{l})$ of numbers as follows; 
 Let $i \in [l]$. Consider the binary representation $b_{1}^{(i)}\cdots b_{k_{i}}^{(i)}$ of $a_{i}$.
 Let $\widetilde{a}_{i}$ be the number having the following binary representation;
 \[\widetilde{a}_{i} = 1b_{1}^{(i)}\cdots 1b_{k_{i}}^{(i)}.\]
 We encode $\sigma$ by the number $\widetilde{\sigma}$, which is determined by the following binary representation;
 \[\widetilde{\sigma} = \widetilde{a}_{1} 00 \widetilde{a}_{2} \cdots 00\widetilde{a}_{l}.\]
 
 It is clear that $\sigma \mapsto \widetilde{\sigma}$ is injective.
 We adopt the convention that numbers out of the range of $\widetilde{\cdot}$ code the empty sequence.
 
 Based on this, we define:
 \begin{itemize}
  \item If $s = \widetilde{\sigma}$ for some $\sigma=(a_{1},\ldots, a_{l})$, then $len(s) := l$, $(s)_{i}:=a_{i}$ for $i \in [len(s)]$ and $(s)_{i} := 0$ for $i \not \in [len(s)]$. $s*x := \widetilde{(a_{1},\ldots,a_{l},x)}$.
  \item Otherwise, $len(s) = 0$, $(s)_{i}:=0$, $s*x := \widetilde{(x)}$.
 \end{itemize}

We consider the theory $\widetilde{T^{2}_{2}}(R)$ of the language $\widetilde{\mathcal{L}}_{BA}\cup\{R\}$, where $R$ is a fresh binary predicate symbol; whose axioms are listed below:
\begin{enumerate}
 \item BASIC given in \cite{Buss}.
 \item $len(0)=0$.
 \item $len(s * x) = len(s) + 1$.
 \item $len(s) \leq |s|$.
 \item $(s)_{i} \leq s$.
 \item $(s*x)_{len(s)+1} = x$.
 \item $1\leq i \land i \leq len(s) \rightarrow (s*x)_{i} = (s)_{i}$.
  \item $s \leq s*x$.
 \item $x \leq s*x$.
 \item $|s*x| \leq |s| + 2(|x|+1)+2$.
 \item $\widetilde{\Sigma^{b}_{2}}(R)$-Induction, where $\widetilde{\Sigma^{b}_{i}}(R)$ and $\widetilde{\Pi^{b}_{i}}(R)$ ($i \in \NN$) are classes of bounded formulae defined as follows:
 \begin{itemize}
  \item $\widetilde{\Sigma^{b}_{0}}(R)$ and $\widetilde{\Pi^{b}_{0}}(R)$ are the class of all sharply bounded formulae (of the extended language $\widetilde{\mathcal{L}}_{BA}\cup\{R\}$).
  \item $\widetilde{\Sigma^{b}_{i+1}}(R)$ is the class collecting bounded formulae of the following form;
  \[\exists x \leq t. \varphi(x),\]
  where $\varphi \in \widetilde{\Pi^{b}_{i}}(R)$.
  Similarly for $\widetilde{\Pi^{b}_{i+1}}(R)$.
 \end{itemize}
\end{enumerate}

It is routine to verify the following:
\begin{prop}
The following hold:
\begin{enumerate}
 \item For a formula $\varphi(x,y)$,
 \[\widetilde{T^{2}_{2}}(R) \vdash \left(\forall x \leq t. \forall y \leq u. \ \varphi(x,y)\right) \leftrightarrow \forall p \leq (0*t)*u.\ \varphi((p)_{1},(p)_{2}).\]
 \item For any formula of the form $\forall x \leq |t|. \exists y \leq u. \varphi(x,y)$, where $\varphi(x,y) \in \bigcup_{i=0}^{1}\widetilde{\Pi^{b}_{i}}(R)$, the following holds;
 \[\widetilde{T^{2}_{2}}(R) \vdash \forall x \leq |t|. \exists y \leq u. \varphi(x,y) \leftrightarrow \exists s \leq t\#16u^{2}.\  \varphi(x,(s)_{x}).\]
 \item For any $\Sigma^{b}_{2}(R)$-formula $\varphi$ in the sense of \cite{Buss}, there exists $\widetilde{\Sigma^{b}_{2}}(R)$-formula $\widetilde{\varphi}$ such that
 \[\widetilde{T^{2}_{2}}(R) \vdash \varphi \leftrightarrow \widetilde{\varphi}.\]
\end{enumerate}
\end{prop}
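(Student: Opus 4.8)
The plan is to verify the three items in turn; all are standard facts about the sequence coding $\widetilde{\cdot}$ introduced above, so the real work is to pin down a few structural lemmas about $\widetilde{\cdot}$ and then to thread an induction on formulae through item~3. First I would record, provably in $\widetilde{T^{2}_{2}}(R)$ (indeed already in a weak base theory), the following about the encoding: (a) $0$ is not in the range of $\widetilde{\cdot}$ (since every $\widetilde{a}_{1}$ has leading bit $1$), hence by the stated convention $0$ codes the empty sequence and $0*t=\widetilde{(t)}$, $(0*t)*u=\widetilde{(t,u)}$; (b) a monotonicity lemma, $x\le t\ \&\ y\le u\ \to\ \widetilde{(x,y)}\le\widetilde{(t,u)}$, proved by comparing the binary lengths of $\widetilde{x},\widetilde{t}$ (and of $\widetilde{y},\widetilde{u}$) and, in the equal-length case, by locating the first differing bit; (c) the dual boundedness lemma, $p\le\widetilde{(t,u)}\ \to\ (p)_{1}\le t\ \&\ (p)_{2}\le u$, using the convention for $p$ outside the range of $\widetilde{\cdot}$ (then $(p)_{1}=(p)_{2}=0$), and otherwise the same length bookkeeping (a code of a sequence of length $\ge 3$ that still fits under $\widetilde{(t,u)}$ must have its first two entries $\le t,u$).

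With (a)--(c) in hand item~1 is immediate: for $\Leftarrow$, given $x\le t$, $y\le u$ instantiate the bound variable $p$ of the right-hand side by $p:=\widetilde{(x,y)}\le(0*t)*u$ and use $(p)_{1}=x$, $(p)_{2}=y$; for $\Rightarrow$, given $p\le(0*t)*u$ apply (c) to get $(p)_{1}\le t$, $(p)_{2}\le u$ and instantiate the left-hand side. For item~2 I read the right-hand side as $\exists s\le t\#16u^{2}.\ \forall x\le|t|.\ \varphi(x,(s)_{x})$; the $\leftarrow$ direction is trivial (put $y:=(s)_{x}$). For $\to$ this is exactly the replacement principle for $\widetilde{\Pi^{b}_{i}}(R)$-matrices, $i\le 1$, which $\widetilde{T^{2}_{2}}(R)$ proves: using $\widetilde{\Sigma^{b}_{2}}(R)$-induction on the length built so far one constructs a code $s$ with $len(s)=|t|$ by $|t|$-many applications of $*$, appending at step $x$ a witness $y\le u$ for the current $x$ (note $len(s*y)=len(s)+1$ and $(s*y)_{x}=(s)_{x}$ for $x\le len(s)$ by axioms (3), (6), (7)). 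The only genuinely computational point is that the resulting code obeys $|s|\le|t\#16u^{2}|$; this is a routine length estimate from axiom (10), the factor $16=2^{4}$ (together with the two separator bits per entry and $|u|$ bits per entry after interleaving) absorbing the per-step overhead $2(|u|+1)+2$.

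Item~3 I would prove by induction on the structure of a Buss $\Sigma^{b}_{2}(R)$-formula $\varphi$, maintaining the invariant that every $\Sigma^{b}_{k}$- (resp. $\Pi^{b}_{k}$-) subformula, $k\le 2$, has been replaced by a provably equivalent $\widetilde{\Sigma^{b}_{k}}(R)$- (resp. $\widetilde{\Pi^{b}_{k}}(R)$-) formula. Atomic and sharply bounded formulae are already $\widetilde{\Sigma^{b}_{0}}(R)=\widetilde{\Pi^{b}_{0}}(R)$ in the extended language; negation is handled by the $\widetilde{\Sigma}\leftrightarrow\widetilde{\Pi}$ duality (items~1 and~2 have evident duals); for $\wedge,\vee$ one prenexes and uses item~1 (and its dual) to merge the two leading same-type quantifier blocks; and a block of bounded quantifiers of one type is contracted to a single quantifier by iterating item~1, while a sharply bounded quantifier standing in front of the opposite type is absorbed by item~2 (or its dual) --- this is the only place the $\#$-bounded coding is really needed. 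The main obstacle, and the only delicate part, is precisely this threading: one must check that these prenexing and contraction steps can be carried out so that no bound is ever violated and so that at every stage one only needs to collect over a class no more complex than $\widetilde{\Pi^{b}_{1}}(R)$ --- i.e. exactly the replacement licensed by item~2 and $\widetilde{\Sigma^{b}_{2}}(R)$-induction. Since $\Sigma^{b}_{2}$ normal form requires contracting at most one existential block over one universal block over a sharply bounded matrix, this does go through, and the remaining verification is the ``routine'' bookkeeping the statement alludes to.
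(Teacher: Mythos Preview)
The paper offers no proof beyond the assertion that the proposition is ``routine to verify'', so there is nothing substantive to compare your argument against. Your overall plan is the standard one, and your handling of item~2 (replacement via $\widetilde{\Sigma^b_2}(R)$-induction, building the witnessing sequence by $*$-appending) and item~3 (structural induction, collapsing like quantifier blocks by item~1 and absorbing sharply bounded quantifiers by item~2) is the expected approach.

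There is, however, a genuine gap in your treatment of item~1: your lemma~(c) is false for the encoding described in the paper. Take $t=1$, $u=7$. Then $(0*t)*u=\widetilde{(1,7)}$ has binary expansion $11\,00\,111111$ (value $831$), while $p:=\widetilde{(3,1)}$ has binary expansion $1111\,00\,11$ (value $243$); thus $p\le(0*t)*u$, yet $(p)_1=3>1=t$. An even shorter example: with $t=u=1$ one has $p:=\widetilde{(3)}=15\le 51=\widetilde{(1,1)}$ but $(p)_1=3$. Consequently the $\Rightarrow$ direction of item~1, read literally, is refutable in the standard model (take $\varphi(x,y)\equiv x\le 1$); item~1 as printed is a slight misstatement, and the right-hand side should carry the guards $(p)_1\le t\wedge(p)_2\le u$ in the matrix. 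The same slip recurs in your item~2: the direction you call ``trivial'' needs $(s)_x\le u$, which does not follow from $s\le t\#16u^2$. Once these guards are inserted, your monotonicity lemma~(b) suffices for item~1, the replacement argument goes through for item~2, and the guarded forms are exactly what the induction in item~3 actually uses.
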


Hence, $\widetilde{T^{2}_{2}}(R)$ is an extension of $T^{2}_{2}(R)$ of usual formalization. 
Furthermore, it is also straightforward to see that it is actually a conservative extension of $T^{2}_{2}(R)$. (The all function symbols we added are representing polynomial time functions, and their basic properties such as the additional axioms above can be proven in $S^{1}_{2}$ under appropriate formalizations of them. See \cite{Buss}.)

Now, one-sided sequent-calculus formulation of $\widetilde{T^{2}_{2}}(R)$ can be considered.
We adopt the conventions and formalization given in \cite{OrdinalAnalysis}, that is, we only treat formulae of negation normal form and adopt the following derivation rules:

 \begin{itemize}
 \item Initial Sequent;
 
 \begin{prooftree}
 \AxiomC{}  \RightLabel{\quad (where $L$ is a literal)}
  \UnaryInfC{$\Gamma, L, \overline{L}$}

\end{prooftree}

 \item $\lor$-Rule; 
   \begin{prooftree}
 \AxiomC{$\Gamma, \varphi_{i_{0}}$} \RightLabel{\quad (where $\varphi_{1} \lor \varphi_{2} \in \Gamma$, $i_{0}=1,2$)}
  \UnaryInfC{$\Gamma$}
 \end{prooftree}

   \item $\exists$-Rule: 
   \begin{prooftree}
 \AxiomC{$\Gamma, \varphi(u)$}
  \RightLabel{\quad (where $\exists x. \varphi(x) \in \Gamma$)}
  \UnaryInfC{$\Gamma$}
 \end{prooftree}

 \item $\exists^{\leq}$-Rule; 
   \begin{prooftree}
 \AxiomC{$\Gamma, \varphi(u)$}
 \AxiomC{$\Gamma, u \leq t$}
  \RightLabel{\quad (where $\exists x \leq t. \varphi(x) \in \Gamma$, and $u$ is a term)}
  \BinaryInfC{$\Gamma$}
 \end{prooftree}
 
 \item $\land$-Rule;
    \begin{prooftree}
 \AxiomC{$\Gamma, \varphi_{1}$}
 \AxiomC{$\Gamma, \varphi_{2}$}
   \RightLabel{\quad (where $\varphi_{1} \land \varphi_{2} \in \Gamma$)}
  \BinaryInfC{$\Gamma$}
 \end{prooftree}

 \item $\forall$-Rule:
    \begin{prooftree}
 \AxiomC{$\Gamma, \varphi(a)$}
   \RightLabel{\quad (where $\forall x. \varphi(x) \in \Gamma$, and $a$ is an eigenvariable)}
  \UnaryInfC{$\Gamma$}
 \end{prooftree}

 \item $\forall^{\leq}$-Rule;
    \begin{prooftree}
 \AxiomC{$\Gamma, \overline{a \leq t}, \varphi(a)$}
   \RightLabel{\quad (where $\forall x \leq t. \varphi(x) \in \Gamma$, and $a$ is an eigenvariable)}
  \UnaryInfC{$\Gamma$}
 \end{prooftree}

 \item Axiom of $\widetilde{T^{2}_{2}}(R)$;
  \begin{prooftree}
 \AxiomC{$\Gamma, \overline{\varphi}$} \RightLabel{\quad (where $\varphi$ is a substitution instance of one of open axioms of $\widetilde{T^{2}_{2}}(R)$)}
  \UnaryInfC{$\Gamma$}
 \end{prooftree}

 \item $\widetilde{\Sigma^{b}_{2}}(R)$-Indunction;
     \begin{prooftree}
 \AxiomC{$\Gamma, \varphi(0)$}
 \AxiomC{$\Gamma, \overline{\varphi(a)},\varphi(a+1)$}
  \AxiomC{$\Gamma, \overline{\varphi(t)}$}
  \RightLabel{\quad (where $\varphi \in \widetilde{\Sigma^{b}_{2}}(R)$, $a$ is an eigenvariable)}
  \TrinaryInfC{$\Gamma$}
 \end{prooftree}
\end{itemize}

Furthermore, we consider the following first-order version of $ontoPHP^{n+1}_{n}$:

\begin{defn}
Let $ontoPHP^{x+1}_{x}(R)$ be the following sequent of $\widetilde{\mathcal{L}}_{BA}$-formulae:
\begin{align*}
\{ &\exists i \leq x+1. \forall j \leq x.\ \lnot R(i,j),\\ &\exists i \leq x+1. \exists i' \leq x+1. \exists j \leq x.  (R(i,j) \land R(i',j) \land i \neq i^{\prime})),\\
 &\exists j \leq x. \forall i \leq x+1.\ \lnot R(i,j),\\ &\exists j \leq x. \exists j' \leq x. \exists i \leq x+1.  (R(i,j) \land R(i',j) \land j \neq j^{\prime})) \}.
\end{align*}
\end{defn}

Now, Proposition \ref{ParisWilkieTranslation} can be proved by a straightforward application of the Paris-Wilkie translation argument. (See \cite{clearwitnessing} for a presentation under a similar convention to ours. For the theoretical background and context, see section 8.2 of \cite{proofcomplexity}.)

\subsection{An equivalent formulation of $\mathcal{G}_{2}$}\label{Auxiliary info not needed}
In this section, we see that the auxiliary information $\vec{A}$ in the formulation of $\mathcal{G}_{2}$ is not essential.

\begin{defn}
Let $n,C \in \NN$ and $T$ be an $(n,C)$-tree.
$\mathcal{G}^{\prime}_{2}(n,C,T)$ is the following game: the same as $\mathcal{G}_{2}(n,C,T)$ except:
\begin{itemize}
 \item A possible position is a map $L$ such that:
 \begin{enumerate}
 \item $\dom(L) \subseteq T$.
   \item For $v \in \dom(L)$, $L(v) \in (\mathcal{M}_{n})_{\leq |n|^{C}\times height(v)}$.
 
   \item $\dom(L)$ is closed downwards under $\subseteq$.
   %\item If $v \in \dom(L)$ and $w$ is a sibling of $v$ in $T$, then $w \in \dom(L)$.
   \item $v \subseteq w \in \dom(L)$, if $M=L(v)$ and $M'=L(w)$, then they satisfy $M \subseteq M'$.

 \end{enumerate}
 \item $L_{0}:=\emptyset$.
 \item When \deff{Prover} plays $\langle o,x,B \rangle$, he plays a pair $\langle o,x \rangle$ instead.
 The transitions are made in the same way as $\mathcal{G}_{2}$, just disregarding the auxiliary information $\vec{A}$, $\vec{\alpha}$, and $B$.
\end{itemize} 
\end{defn}

As $\mathcal{G}_{2}$, the game $\mathcal{G}^{\prime}_{2}$ is also determined.
The proof is in exactly the same lines as Corollary \ref{determinacy} and we omit it.
The notions of \deff{Prover}'s oblivious strategies, \deff{Delayer}'s strategies, and which beats which are defined analogously with Definitions \ref{obliviousstrategy}-\ref{strategybeatsanother}, disregarding the auxiliary information $\vec{A}$.
Then $\mathcal{G}_{2}$ and $\mathcal{G}^{\prime}_{2}$ are equivalent in the following sense:

\begin{prop}
The following are equivalent:
\begin{enumerate}
 \item\label{withinfo} There exists $C>0$ such that, for sufficiently large $n$, there exists an $(n,C)$-tree $T$ such that \deff{Prover} has an oblivious winning strategy for $\mathcal{G}_{2}(n,C,T)$.
  \item\label{withoutinfo} The same condition for $\mathcal{G}'_{2}(n,C,T)$.
\end{enumerate}
\end{prop}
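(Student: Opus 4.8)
The plan is to prove the two implications separately; only $(\ref{withinfo})\Rightarrow(\ref{withoutinfo})$ has any content. For $(\ref{withoutinfo})\Rightarrow(\ref{withinfo})$ I would argue that $\mathcal{G}_2(n,C,T)$ differs from $\mathcal{G}'_2(n,C,T)$ only by the inert data $\vec A$ (and the extra component $B$ of \textbf{Prover}'s move), and that the transition rules of $\mathcal{G}_2$ act on a position exactly as those of $\mathcal{G}'_2$ act on the position obtained by deleting all the $\vec A$-coordinates. Hence, given an oblivious winning strategy for $\mathcal{G}'_2(n,C,T)$, one obtains one for $\mathcal{G}_2(n,C,T)$ by playing the same queries and the same pair $\langle o,x\rangle$, padding with $B:=1$. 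The forgetful map $L\mapsto\widehat L$ (erase all $\vec A$'s) is a bijection between plays of $\mathcal{G}_2$ following the padded strategy and plays of $\mathcal{G}'_2$ following the original strategy against the push-forward of \textbf{Delayer}'s strategy, and it preserves the winner; this yields the implication.

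For $(\ref{withinfo})\Rightarrow(\ref{withoutinfo})$ fix $C$ and an oblivious winning strategy $(f_1,f_2)$ for $\mathcal{G}_2(n,C,T)$. The idea is to absorb the auxiliary data into the tree so that an oblivious strategy of $\mathcal{G}'_2$, which is allowed to inspect the current frontier vertex, can ``see'' the information that $(f_1,f_2)$ would have stored. Concretely I would build an $(n,C')$-tree $T'$ with $C'=O(C)$ whose vertices encode pairs $(v,\vec A)$, $v\in T$ and $\vec A\in[2^{|n|^C}]^{height(v)}$ a possible auxiliary sequence at $v$, in such a way that the child relation and the lexicographic order of $T'$ refine those of $T$ and that $\vec A$ is recoverable from the address of the encoding vertex. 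The natural encoding interleaves, along a branch, the $T$-child indices with the auxiliary values, inserting one ``bookkeeping'' vertex between consecutive $T$-levels: below the vertex coding $(v,\vec A)$ the bookkeeping vertices are indexed by the $T$-children $i$ of $v$, and the children of the bookkeeping vertex for $i$ are the vertices coding $(v*i,\vec A*B)$, $B\in[2^{|n|^C}]$. Then $height(T')\le 2\,height(T)\le 2C$ and the branching stays below $2^{|n|^{C+1}}$ (using $2|n|^C\le|n|^{C+1}$ for large $n$), so $T'$ is an $(n,C')$-tree with $C'=2C$.

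Given such a $T'$, I would simulate each turn of $\mathcal{G}_2$ by a bounded number of turns of $\mathcal{G}'_2(n,C',T')$: a ``real'' turn executing the query $f_1$ and the move $f_2$ at the even vertex coding the current $(v,\vec A)$, and a ``bookkeeping'' turn (with the empty query, to which \textbf{Delayer} must answer the empty matching) that moves through the bookkeeping vertex into the even vertex coding the freshly chosen $(v*i,\vec A*B)$. Since the frontier vertex now encodes $\vec A$, the translated strategy is a genuine oblivious strategy for $\mathcal{G}'_2$, and a routine induction — in the spirit of Corollary \ref{determinacy} and the proof of Theorem \ref{ProofIsStrategy2} — shows that the resulting $\mathcal{G}'_2$-play corresponds to the $\mathcal{G}_2$-play and has the same winner, so the translated strategy is winning.

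The main obstacle is making this encoding cohabit with \textbf{Prover}'s backtracking options $o=2,3$. Option $o=1$ is harmless, being literally ``descend two levels in $T'$''. But $o=2$ (move to the immediate right sibling of an ancestor-child) and $o=3$ (move to the leftmost child of the deepest recorded descendant of an ancestor-child) force \textbf{Prover} in $\mathcal{G}'_2$ into a vertex of $T'$ that is rigidly determined by the current branch; this vertex must simultaneously decode to the $T$-vertex intended by $f_2$ and be positioned so that the follow-up bookkeeping turn can still reach the prescribed $(v*i,\vec A*B)$ — and, crucially, after such a jump the data $(v,L(v))$ that $f_2$ used to compute the new auxiliary value $B$ has left the current branch, so the simulation must be arranged so that $B$ is re-derivable from the landed-on $T'$-vertex and the current matching alone (for instance by having \textbf{Prover} re-issue a query that forces \textbf{Delayer} to repeat the relevant answer, which together with the decoded address pins down $B$). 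Threading the design of $T'$ and of the two-turn simulation through all of these constraints at once is where essentially all of the (elementary but fiddly) work lies; the remaining verifications are the bookkeeping already carried out in Section \ref{A game notion for T22}.
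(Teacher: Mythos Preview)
Your easy direction $(\ref{withoutinfo})\Rightarrow(\ref{withinfo})$ is exactly what the paper does: pad with $B:=1$ and push \textbf{Delayer}'s strategy forward along the forgetful map.

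For the hard direction your construction differs from the paper's. You double the height of the tree by inserting a bookkeeping level between consecutive $T$-levels and simulate one $\mathcal{G}_2$-turn by two $\mathcal{G}'_2$-turns. The paper keeps the height the same and instead increases the branching: it fixes $C'\ge C$ with $2^{|n|^{C}}(2^{|n|^{C}}+1)+2^{|n|^{C}}\le 2^{|n|^{C'}}$, chooses a pairing $\llbracket k,A\rrbracket:=A(2^{|n|^{C}}+1)+k$, and sets
\[
T':=\bigl\{(\llbracket v_{1},A_{1}\rrbracket,\ldots,\llbracket v_{h},A_{h}\rrbracket)\ :\ (v_{1},\ldots,v_{h})\in T,\ A_{j}\in[2^{|n|^{C}}]\bigr\}.
\]
The translated strategy simply decodes $\vec A$ from the current $T'$-address and calls $(f_1,f_2)$; for $o=1$ it plays $\langle 1,\llbracket x,B\rrbracket\rangle$, and for $o=2,3$ it plays $\langle o,x'\rangle$ with $x'$ the encoded prefix. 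This is a one-turn-to-one-turn translation, so the options $o=2,3$ translate literally rather than through a two-step dance.

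Your height-doubling route runs into the obstacle you yourself name, and your proposed fix does not close it. After an $o=2$ or $o=3$ jump, the frontier in your $T'$ is a bookkeeping vertex whose address encodes only $(x,\vec\alpha)$ together with the forced sibling index; the matching there is $M''\cup M'$. But $B$ is the third component of $f_{2}(v,M,\vec A,M')$, and none of $v,M,\vec A$ survive the jump --- they lived on the branch that was just abandoned. Re-querying can at best reproduce $M'$; it cannot reconstruct $(v,M,\vec A)$, so the bookkeeping-level oblivious strategy has no way to compute $B$. This is a genuine gap, not just fiddly bookkeeping. The paper's pairing trick is precisely what lets one avoid the second turn and hence this problem: since $\llbracket k,A\rrbracket$ packs both the $T$-child and the auxiliary value into a single index, the $o=1$ case needs no follow-up move, and for $o=2,3$ the single $\mathcal{G}'_2$-move already lands at a $T'$-vertex that decodes to a legitimate $(T\text{-vertex},\vec A)$ pair, so $(f_1,f_2)$ can be applied there directly.
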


\begin{proof}
(\ref{withoutinfo}) $\Leftarrow$ (\ref{withinfo}) is easier.
Let $C$ and $n$ be as claimed in (\ref{withoutinfo}). 
Then there exists $T$ such that \deff{Prover} has an oblivious winning strategy $(f'_{1},f'_{2})$ for $\mathcal{G}'_{2}(n,C,T)$.
We observe that it induces \deff{Prover}'s oblivious winning strategy for $\mathcal{G}_{2}(n,C,T)$.
Indeed, set:
\begin{align*}
 f_{1}(v,M,\vec{A}) &:= f'_{1}(v,M),\\
 f_{2}(v,M,\vec{A},M') &:= \langle o, x, 1 \rangle, \ \mbox{where $\langle o,x\rangle := f'_{2}(v,M,M')$.} 
\end{align*}
Then $(f_{1},f_{2})$ beats arbitrary \deff{Delayer}'s strategy $g$ for $\mathcal{G}_{2}(n,C,T)$ since the play depends only on $g\restriction_{D}$, where
\[D:= \{(L,Q) \mid \mbox{For each $v \in \dom(L)$, $L(v)$ is of the form $\langle M, 1, \ldots, 1\rangle$}\},\]
and $(f'_{1},f'_{2})$ beats \deff{Delayer}'s strategy $g'$ for $\mathcal{G}'_{2}$ induced by $g \restriction_{D}$:
\[g'(L,Q) := g(\tilde{L},Q),\]
where $\dom(\tilde{L})=\dom(L)$ and $\tilde{L}(v) := \langle M, \vec{1} \rangle$ with $|\vec{1}|=height(v)$.

Now, we consider the converse.
Let $C$ be as claimed in (\ref{withinfo}). 
There exists $N$ such that, for all $n \geq N$, there exists an $(n,C)$-tree $T$ such that \deff{Prover} has an oblivious winning strategy for $\mathcal{G}_{2}(n,C,T)$.
Let $C' \geq C$ be a constant and $N' \geq N$ be large enough such that
\[\forall n \geq N'.\ 2^{|n|^{C}}(2^{|n|^{C}}+1)+2^{|n|^{C}} \leq 2^{|n|^{C'}}.\]
We show that, for all $n \geq N'$, there exists an $(n,C')$-tree $T'$ such that \deff{Prover} has an oblivious winning strategy $(f'_{1},f'_{2})$ for $\mathcal{G}'_{2}(n,C',T')$.
Take $T$ for $n$ assured by (\ref{withinfo}).
\deff{Prover} has an oblivious winning strategy $(f_{1},f_{2})$ for it.
To define $T'$, let
\[\llbracket k, A \rrbracket := A (2^{|n|^{C}}+1) + k \in [2^{|n|^{C'}}]\]
for $k,A \in [2^{|n|^{C}}]$.
Note that $\llbracket \cdot, \cdot \rrbracket$ is injective on $[2^{|n|^{C}}] \times [2^{|n|^{C}}]$.

Now, define $T'$ as follows:
\[T':=\left\{(\llbracket v_{1}, A_{1} \rrbracket,\ldots,\llbracket v_{h}, A_{h} \rrbracket) \in  [2^{|n|^{C'}}]^{\leq C} \mid (v_{1},\ldots, v_{h}) \in T,\ \& \forall j \in [h].\ A_{j} \in [2^{|n|^{C}}]\right\}.\]
For each $v'=(\llbracket v_{1}, A_{1} \rrbracket,\ldots,\llbracket v_{h}, A_{h} \rrbracket) \in T'$, let 
\[\vec{A}(v') := (A_{1}, \ldots, A_{h}).\]
Then the following $(f'_{1},f'_{2})$ is the desired oblivious winning strategy of \deff{Prover} for $\mathcal{G}'_{2}(n,C',T')$.

\begin{align*}
f'_{1}(v',M) &:= f_{1}(v,M,\vec{A}(v'))\\
f'_{2}(v',M,M') &:= 
\begin{cases}
\langle o, \llbracket x,B \rrbracket \rangle &\mbox{(if $o=1$)}\\
\left\langle o, \left(\llbracket x_{1}, A_{1}\rrbracket, \ldots, \llbracket x_{h}, A_{h}\rrbracket\right) \right\rangle \ &\mbox{(if $o$ is $2$ or $3$, $x=(x_{i})_{i=1}^{h} \in T$, and $\vec{A}(v')=(A_{i})_{i}$.)}
\end{cases}
\end{align*}
where $\langle o,x,B\rangle := f_{2}(v,M,\vec{A}(v'),M')$.

\end{proof}

\subsection{Derivation of labelings}
\label{php-tree derivations}

This subsection follows directly the proof of \ref{main1}. We are given \PP's strategy $(s, F)$ for $\mathcal{G}_{2}(n)$ with $s \geq \max \{s_{0} + 1, 2(n - 1) + 2\}$ and we know that there are winning anti-strategies for \D against any $(s', F')$ for $\mathcal{G}_{2}(n - 1)$ and $(s'', F'')$ for $\mathcal{G}_{2}(n - 2)$ with $s', s'' \geq s_{0}$. 

We have also argued that the associated php-tree $\mathcal{T}_{F}$ is complete, $F$ has no loops, and there is a loose pair $(p, h)$ for $\mathcal{T}_{F}$. 

Finally, the root of $\mathcal{T}_{F}$ is $n \in P_{n}$.

Without loss of generality let $(p, h)$ be $(0, 0) \in P_{n} \times H_{n}$. Let \D use \FH{0}. If $0 \in P_{n}$ is guaranteed not to appear as \PP's question, we construct an anti-strategy for \D against any $s \geq s_{0}$. So we can assume there is $p' \in P_{n}$ and $h' \in H_{n}$ with $p' \neq 0 \in P_{n}$ and $h' \neq 0 \in H_{n}$ such that $F(p', h') = 0$. 

Without loss of generality let $(p', h')=(1, 1)$.

Let \D use \FH{0, 1}. Similarly as above, we may assume there are $p'' \neq 0, 1$ and $h'' \neq 0, 1$ such that either $F(p'', h'') = 0$ or $F(p'', h'') = 1$. Here we use the fact that we have winning anti-strategies for \D for $\mathcal{G}_{2}(n - 2)$. Without loss of generality let $(p'', h'')=(2, 2)$.

We start by considering a subtree $\mathcal{T}'$ of $\mathcal{T}_{F}$ formed as follows: start at the root $n \in P_{n}$ and proceed without using edges labeled as $0, 1, 2 \in H_{n}$. Since $\mathcal{T}_{F}$ is complete, eventually, an $(n - 3)$-long branch of $\mathcal{T}_{F}$ is built, which can also be viewed as a globally consistent path of $\mathcal{G}_{F}$ of length $n - 3$. Denote this branch as $P$. Denote $P'$ and $P''$ branches of length $n - 2$ with $P$ as their initial segment and last edges labeled by $1$ or $2 \in H_{n}$. $\mathcal{T'}$ is then defined as a collection of all the maximal branches of $\mathcal{T}_{F}$ which have $P'$ or $P''$ as their initial segments. $\mathcal{T}'$ has a natural tree structure which is illustrated in Figure \ref{php2}.

\begin{figure}[h!t]
\centering
\begin{tikzpicture}[-, >=stealth, semithick, node distance=1.6cm, initial text="", initial/.style={draw=none}]
    \node[state] (qroot) {$n$};
    \node[state, initial, below of=qroot] (qnull) {...};
    \node[state, below of=qnull] (qinit) {};
    \node[state, below left of=qinit] (qL) {};
    \node[state, below right of=qinit] (qR) {};
    \node[state, below left of=qL] (qLL) {};
    \node [state, below of=qL] (qLR) {};
    \node[state, below of=qR] (qRL) {};
    \node [state, below right of=qR] (qRR) {};
    \node[state, below of=qLL] (qLLB) {};
    \node [state, below of=qLR] (qLRB) {};
    \node[state, below of=qRL] (qRLB) {};
    \node [state, below of=qRR] (qRRB) {};
    
    \draw   (qroot) edge[] node{} (qnull)
            (qnull) edge[] node{} (qinit)
            (qinit) edge[above left] node{1} (qL)
            (qinit) edge[above right] node{2} (qR)
            (qL) edge[above left] node{0} (qLL)
            (qL) edge[right] node{2} (qLR)
            (qR) edge[left] node{0} (qRL)
            (qR) edge[above right] node{1} (qRR)
            (qLL) edge[left] node{2} (qLLB)
            (qLR) edge[right] node{0} (qLRB)
            (qRL) edge[left] node{1} (qRLB)
            (qRR) edge[right] node{0} (qRRB);
\end{tikzpicture}
\caption{- representation of $\mathcal{T}'$}
\label{php2}
\end{figure}

Blank circles represent nodes for which we have not yet fixed any particular labeling. The proof proceeds by subsequently deriving more and more information in the form of labeling of nodes. Derivations proceed similarly as in Theorem \ref{main}.

Without loss of generality, we assume the top-most blank node as above is labeled as $3 \in P_{n}$ (notice that such a node cannot be labeled as $0, 1, 2 \in P_{n}$ as this would contradict the fact that $(0, 0)$ is loose for $\mathcal{T}_{F}$). 

Using the fact that $(0, 0)$ is loose, we also derive that nodes labeled by $0 \in P_{n}$ must appear on a branch of $\mathcal{T}'$ somewhere below an edge labeled as $0 \in H_{n}$. 

We further derive $F(3, 2) = 2$, otherwise $F(3, 2)$ equals $1$ and since $F(1, 1) = 0$ we get that $(0, 0)$ cannot be loose for $\mathcal{T}_{F}$. 

The result is shown in Figure \ref{php3}.

\begin{figure}[h!t]
\centering
\begin{tikzpicture}[-, >=stealth, semithick, node distance=1.6cm, initial text="", initial/.style={draw=none}]
    \node[state, initial] (qnull) {...};
    \node[state, below of=qnull] (qinit) {3};
    \node[state, below left of=qinit] (qL) {};
    \node[state, below right of=qinit] (qR) {2};
    \node[state, below left of=qL] (qLL) {};
    \node [state, below of=qL] (qLR) {};
    \node[state, below of=qR] (qRL) {};
    \node [state, below right of=qR] (qRR) {1};
    \node[state, below of=qLL] (qLLB) {};
    \node [state, below of=qLR] (qLRB) {0};
    \node[state, below of=qRL] (qRLB) {};
    \node [state, below of=qRR] (qRRB) {0};
    
    \draw
            (qnull) edge[] node{} (qinit)
            (qinit) edge[above left] node{1} (qL)
            (qinit) edge[above right] node{2} (qR)
            (qL) edge[above left] node{0} (qLL)
            (qL) edge[right] node{2} (qLR)
            (qR) edge[left] node{0} (qRL)
            (qR) edge[above right] node{1} (qRR)
            (qLL) edge[left] node{2} (qLLB)
            (qLR) edge[right] node{0} (qLRB)
            (qRL) edge[left] node{1} (qRLB)
            (qRR) edge[right] node{0} (qRRB);
\end{tikzpicture}
\caption{}
\label{php3}
\end{figure}

We consider two different cases.

\begin{enumerate}
        
    \item $F(2, 0) = 1$. We then label the rest of the blank nodes of $\mathcal{T}'$ using the fact that $\mathcal{T}_{F}$ is a complete symmetric php-tree. 
    
    Roman numerals in Figure \ref{php4} represent all the derivations and their order.

\begin{figure}[h!t]
\centering
\begin{tikzpicture}[-, >=stealth, semithick, node distance=2cm, initial text="", initial/.style={draw=none}]
    \node[state, initial] (qnull) {...};
    \node[state, below of=qnull] (qinit) {3};
    \node[state, below left of=qinit] (qL) {2 IV};
    \node[state, below right of=qinit] (qR) {2};
    \node[state, below left of=qL] (qLL) {1 V};
    \node [state, below of=qL] (qLR) {1 III};
    \node[state, below of=qR] (qRL) {1 I};
    \node [state, below right of=qR] (qRR) {1};
    \node[state, below of=qLL] (qLLB) {0 VI};
    \node [state, below of=qLR] (qLRB) {0};
    \node[state, below of=qRL] (qRLB) {0 II};
    \node [state, below of=qRR] (qRRB) {0};
    
    \draw
            (qnull) edge[] node{} (qinit)
            (qinit) edge[above left] node{1} (qL)
            (qinit) edge[above right] node{2} (qR)
            (qL) edge[above left] node{0} (qLL)
            (qL) edge[right] node{2} (qLR)
            (qR) edge[left] node{0} (qRL)
            (qR) edge[above right] node{1} (qRR)
            (qLL) edge[left] node{2} (qLLB)
            (qLR) edge[right] node{0} (qLRB)
            (qRL) edge[left] node{1} (qRLB)
            (qRR) edge[right] node{0} (qRRB);
\end{tikzpicture}
\caption{- sequence of derivations of labeling of $\mathcal{T}'$}
\label{php4}
\end{figure}

    Let $F(0, 1) = a \in P_{n}$. Based upon the value of $a$, we get different covers-by-two.

    \begin{itemize}
        \item $a \geq 3$ is shown in Figure \ref{php5}.

\begin{figure}[h!t]
\centering
\begin{tikzpicture}[->, >=stealth, semithick, node distance=2.4cm, initial text="", initial/.style={draw=none}]
    \node[state, initial] (qnull) {...};
    \node[state, right of=qnull] (q3) {3};
    \node[state, right of=q3] (q2) {2};
    \node[state, right of=q2] (q1) {1};
    \node[state, right of=q1] (q0) {0};
    \node[state, initial, below=2cm of qnull] (qnullP) {...};
    \node[state, right of=qnullP] (q3P) {3};
    \node[state, right of=q3P] (q2P) {2};
    \node[state, right of=q2P] (q1P) {1};
    \node[state, right of=q1P] (q0P) {0};

    \draw   (qnull) edge[above] node{} (q3)
            (q3) edge[above, color=red, text=black, dashed] node{2} (q2)
            (q2) edge[above] node{0} (q1)
            (q1) edge[above, color=red, text=black, dashed] node{2} (q0)
            (q0) edge[above, bend right=30] node{1} (qnull)
            (qnullP) edge[above] node{} (q3P)
            (q3P) edge[above] node{2} (q2P)
            (q2P) edge[above, color=red, text=black, dashed] node{1} (q1P)
            (q1P) edge[above] node{0} (q0P)
            (q0P) edge[above, bend right=30, color=red, text=black, dashed] node{1} (qnullP);
\end{tikzpicture}
\caption{- cover-by-two derived under the assumption $a \geq 3$}
\label{php5}
\end{figure}

        \item $a = 2$ is shown in Figure \ref{php6}.

\begin{figure}[h!t]
\centering
\begin{tikzpicture}[->, >=stealth, semithick, node distance=2.4cm, initial text="", initial/.style={draw=none}]
    \node[state, initial] (qnull) {...};
    \node[state, right of=qnull] (q3) {3};
    \node[state, right of=q3] (q2) {2};
    \node[state, right of=q2] (q1) {1};
    \node[state, right of=q1] (q0) {0};
    \node[state, initial, below of=qnull] (qnullP) {...};
    \node[state, right of=qnullP] (q3P) {3};
    \node[state, right of=q3P] (q2P) {2};
    \node[state, right of=q2P] (q1P) {1};
    \node[state, right of=q1P] (q0P) {0};

    \draw   (qnull) edge[above] node{} (q3)
            (q3) edge[above] node{1} (q2)
            (q2) edge[above] node{2} (q1)
            (q1) edge[above] node{0} (q0)
            (q0) edge[above, bend right=35, color=red, text=black, dashed] node{1} (q2)
            (qnullP) edge[above] node{} (q3P)
            (q3P) edge[above] node{2} (q2P)
            (q2P) edge[above] node{0} (q1P)
            (q1P) edge[above, color=red, text=black, dashed] node{2} (q0P)
            (q0P) edge[above, bend right=35] node{1} (q2P);
\end{tikzpicture}
\caption{- cover-by-two derived under the assumption $a = 2$}
\label{php6}
\end{figure}

        \item $a = 1$ is shown in Figure \ref{php7}.

\begin{figure}[h!t]
\centering
\begin{tikzpicture}[->, >=stealth, semithick, node distance=2.4cm, initial text="", initial/.style={draw=none}]
    \node[state, initial] (qnull) {...};
    \node[state, right of=qnull] (q3) {3};
    \node[state, right of=q3] (q2) {2};
    \node[state, right of=q2] (q1) {1};
    \node[state, right of=q1] (q0) {0};
    \node[state, initial, below of=qnull] (qnullP) {...};
    \node[state, right of=qnullP] (q3P) {3};
    \node[state, right of=q3P] (q2P) {2};
    \node[state, right of=q2P] (q1P) {1};
    \node[state, right of=q1P] (q0P) {0};

    \draw   (qnull) edge[above] node{} (q3)
            (q3) edge[above] node{1} (q2)
            (q2) edge[above] node{2} (q1)
            (q1) edge[above] node{0} (q0)
            (q0) edge[above, bend right=35, color=red, text=black, dashed] node{1} (q1)
            (qnullP) edge[above] node{} (q3P)
            (q3P) edge[above] node{2} (q2P)
            (q2P) edge[above] node{0} (q1P)
            (q1P) edge[above, color=red, text=black, dashed] node{2} (q0P)
            (q0P) edge[above, bend right=35] node{1} (q1P);
\end{tikzpicture}
\caption{- cover-by-two derived under the assumption $a = 1$}
\label{php7}
\end{figure}
        
    \end{itemize}

\clearpage

    \item $F(2, 0) = 0$. The resulting labeling of $\mathcal{T'}$ is illustrated in Figure \ref{php8}, where we additionally derive $F(0, 1) = 1$.

\begin{figure}[h!t]
\centering
\begin{tikzpicture}[-, >=stealth, semithick, node distance=1.5cm, initial text="", initial/.style={draw=none}]
    \node[state, initial] (qnull) {...};
    \node[state, below of=qnull] (qinit) {3};
    \node[state, below left of=qinit] (qL) {};
    \node[state, below right of=qinit] (qR) {2};
    \node[state, below left of=qL] (qLL) {};
    \node [state, below of=qL] (qLR) {};
    \node[state, below of=qR] (qRL) {0};
    \node [state, below right of=qR] (qRR) {1};
    \node[state, below of=qLL] (qLLB) {};
    \node [state, below of=qLR] (qLRB) {0};
    \node[state, below of=qRL] (qRLB) {1};
    \node [state, below of=qRR] (qRRB) {0};
    
    \draw
            (qnull) edge[] node{} (qinit)
            (qinit) edge[above left] node{1} (qL)
            (qinit) edge[above right] node{2} (qR)
            (qL) edge[above left] node{0} (qLL)
            (qL) edge[right] node{2} (qLR)
            (qR) edge[left] node{0} (qRL)
            (qR) edge[above right] node{1} (qRR)
            (qLL) edge[left] node{2} (qLLB)
            (qLR) edge[right] node{0} (qLRB)
            (qRL) edge[left] node{1} (qRLB)
            (qRR) edge[right] node{0} (qRRB);
\end{tikzpicture}
\caption{}
\label{php8}
\end{figure}

    At this point, the only possible values for $a \in P_{n}$, so that $F(3, 1) = a$, are $1$ or $2$.

    \begin{enumerate}
        \item $a = 1$. We arrive at the situation depicted in Figure \ref{php9}, where Roman numerals represent additional labelings and the order of their derivations as before.

\begin{figure}[h!t]
\centering
\begin{tikzpicture}[-, >=stealth, semithick, node distance=1.8cm, initial text="", initial/.style={draw=none}]
    \node[state, initial] (qnull) {...};
    \node[state, below of=qnull] (qinit) {3};
    \node[state, below left of=qinit] (qL) {1 I};
    \node[state, below right of=qinit] (qR) {2};
    \node[state, below left of=qL] (qLL) {0 III};
    \node [state, below of=qL] (qLR) {2 II};
    \node[state, below of=qR] (qRL) {0};
    \node [state, below right of=qR] (qRR) {1};
    \node[state, below of=qLL] (qLLB) {2 IV};
    \node [state, below of=qLR] (qLRB) {0};
    \node[state, below of=qRL] (qRLB) {1};
    \node [state, below of=qRR] (qRRB) {0};
    
    \draw
            (qnull) edge[] node{} (qinit)
            (qinit) edge[above left] node{1} (qL)
            (qinit) edge[above right] node{2} (qR)
            (qL) edge[above left] node{0} (qLL)
            (qL) edge[right] node{2} (qLR)
            (qR) edge[left] node{0} (qRL)
            (qR) edge[above right] node{1} (qRR)
            (qLL) edge[left] node{2} (qLLB)
            (qLR) edge[right] node{0} (qLRB)
            (qRL) edge[left] node{1} (qRLB)
            (qRR) edge[right] node{0} (qRRB);
\end{tikzpicture}
\caption{}
\label{php9}
\end{figure}

        This leads to a cover-by-two, as is shown in Figure \ref{php10}.

\begin{figure}[ht]
\centering
\begin{tikzpicture}[->, >=stealth, semithick, node distance=2.4cm, initial text="", initial/.style={draw=none}]
    \node[state, initial] (qnull) {...};
    \node[state, right of=qnull] (q3) {3};
    \node[state, right of=q3] (q2) {2};
    \node[state, right of=q2] (q1) {1};
    \node[state, right of=q1] (q0) {0};
    \node[state, initial, below=2cm of qnull] (qnullP) {...};
    \node[state, right of=qnullP] (q3P) {3};
    \node[state, right of=q3P] (q1P) {1};
    \node[state, right of=q1P] (q2P) {2};

    \draw   (qnull) edge[above] node{} (q3)
            (q3) edge[above] node{2} (q2)
            (q2) edge[above] node{1} (q1)
            (q1) edge[above] node{0} (q0)
            (q0) edge[above, bend right=35, color=red, text=black, dashed] node{1} (q1)
            (qnullP) edge[above] node{} (q3P)
            (q3P) edge[above] node{1} (q1P)
            (q1P) edge[above] node{2} (q2P)
            (q2P) edge[above, bend right=35, color=red, text=black, dashed] node{1} (q1P);
\end{tikzpicture}
\caption{- cover-by-two derived under the assumption $F(3, 1) = 1$}
\label{php10}
\end{figure}

        \item $a = 2$. Additional derivations and their order are illustrated in Figure \ref{php11}.

\begin{figure}[h!t]
\centering
\begin{tikzpicture}[-, >=stealth, semithick, node distance=2cm, initial text="", initial/.style={draw=none}]
    \node[state, initial] (qnull) {...};
    \node[state, below of=qnull] (qinit) {3};
    \node[state, below left of=qinit] (qL) {2 I};
    \node[state, below right of=qinit] (qR) {2};
    \node[state, below left of=qL] (qLL) {0 III};
    \node [state, below of=qL] (qLR) {1 II};
    \node[state, below of=qR] (qRL) {0};
    \node [state, below right of=qR] (qRR) {1};
    \node[state, below of=qLL] (qLLB) {1 IV};
    \node [state, below of=qLR] (qLRB) {0};
    \node[state, below of=qRL] (qRLB) {1};
    \node [state, below of=qRR] (qRRB) {0};
    
    \draw
            (qnull) edge[] node{} (qinit)
            (qinit) edge[above left] node{1} (qL)
            (qinit) edge[above right] node{2} (qR)
            (qL) edge[above left] node{0} (qLL)
            (qL) edge[right] node{2} (qLR)
            (qR) edge[left] node{0} (qRL)
            (qR) edge[above right] node{1} (qRR)
            (qLL) edge[left] node{2} (qLLB)
            (qLR) edge[right] node{0} (qLRB)
            (qRL) edge[left] node{1} (qRLB)
            (qRR) edge[right] node{0} (qRRB);
\end{tikzpicture}
\caption{}
\label{php11}
\end{figure}

        This leads to a cover-by-two, as is shown in Figure \ref{php12}.

\begin{figure}[h!t]
\centering
\begin{tikzpicture}[->, >=stealth, semithick, node distance=2.4cm, initial text="", initial/.style={draw=none}]
    \node[state, initial] (qnull) {...};
    \node[state, right of=qnull] (q3) {3};
    \node[state, right of=q3] (q2) {2};
    \node[state, right of=q2] (q0) {0};
    \node[state, right of=q0] (q1) {1};
    \node[state, initial, below of=qnull] (qnullP) {...};
    \node[state, right of=qnullP] (q3P) {3};
    \node[state, right of=q3P] (q2P) {2};
    \node[state, right of=q2P] (q0P) {0};
    \node[state, right of=q0P] (q1P) {1};

    \draw   (qnull) edge[above] node{} (q3)
            (q3) edge[above] node{2} (q2)
            (q2) edge[above] node{0} (q0)
            (q0) edge[above] node{1} (q1)
            (q1) edge[above, bend right=35, color=red, text=black, dashed] node{0} (q0)
            (qnullP) edge[above] node{} (q3P)
            (q3P) edge[above] node{2} (q2P)
            (q2P) edge[above] node{0} (q0P)
            (q0P) edge[above, color=red, text=black, dashed] node{2} (q1P)
            (q1P) edge[above, bend right=35] node{1} (q0P);
\end{tikzpicture}
\caption{- cover-by-two derived under the assumption $a = 2$}
\label{php12}
\end{figure}
        
    \end{enumerate}

\end{enumerate}
This finishes the case analysis. Notice that all the winning anti-strategies produced by covers-by-two as above allow \D to win against any possible $s$.

\newpage

\end{document}